\documentclass[10pt]{amsart}
\pdfoutput=1
\usepackage{txfonts}
\usepackage[T1]{fontenc}
\usepackage{fancyhdr}

\usepackage{amsmath,amsthm,amssymb,hyperref, mdframed}
\usepackage{mwe}
\usepackage{mathrsfs} 
\usepackage{tikz}
\usepackage[all]{xy}
\usetikzlibrary{decorations.markings}
\usetikzlibrary{backgrounds,shapes}
\usepackage{float, subfig}
\usepackage{tikz}
\usetikzlibrary{shapes,arrows}
\usetikzlibrary{shapes.geometric, arrows}
\usetikzlibrary{decorations.markings}
\usepackage{subfig}
\usepackage{tabularx}
\usepackage{enumitem}
\usepackage{amsmath,amsthm,amscd}
\usepackage{amssymb,amsfonts}
\usepackage{fancyhdr} 
\usepackage{pgf,tikz}
\usepackage{caption}
\usepackage{tikz-cd}
\usepackage{tikzscale}
\usetikzlibrary{patterns}
\usepackage{rotating}
\usepackage{xcolor}
\usepackage{lscape}
\usepackage{array} 
\usepackage{tabularx}
\usepackage{multirow}
\usepackage{multicol}
\usepackage{booktabs} 
\usepackage{caption}
\usepackage{xspace}
\usepackage{xfrac}
\usepackage{blindtext}
\usepackage[ruled,vlined]{algorithm2e}

\usetikzlibrary{decorations.markings}
\usetikzlibrary{patterns}
\usetikzlibrary{calc}
\usetikzlibrary{shapes.geometric}

\usepackage{color}
\usepackage{hyperref}
\hypersetup{
    colorlinks=true, 
    linktoc=all,     
    linkcolor=blue,  
    citecolor=blue,
    filecolor=blue,
    urlcolor=blue
}

\setcounter{secnumdepth}{4}
\setcounter{tocdepth}{1}

\definecolor{aliceblue}{rgb}{0.9, 0.95, 1.0}

\usepackage{geometry}
\geometry{a4paper,top=2.5cm,bottom=2.5cm,left=3cm,right=3cm,heightrounded,bindingoffset=0mm}

\linespread{1.125}

\numberwithin{equation}{section}

\newcommand\Z{{\mathbb Z}}
\newcommand\ziz{{\Z[\,{\rm i}}\,]}

\newcommand{\C}{{\mathbb C}}

\newcommand{\pto}[1]{\small{\textsc{#1}}}

\newcommand{\spz}{\mathrm{Sp}(2g,\Z)}
\newcommand{\glplus}{\mathrm{GL}^+(2,\mathbb{R})}

\newcommand{\modul}{\textnormal{Mod}(S_{g})}

\newcommand{\rhomolzp}{\text{H}_1(X\setminus P(\omega),\, Z,\,\mathbb{Z})}
\newcommand{\ahomolz}{\text{H}_1(\,X,\,\mathbb{Z}\,)}
\newcommand{\rhomolz}{\text{H}_1(\,X,\, Z,\,\mathbb{Z}\,)}

\newcommand{\shomolzoo}{\textnormal{H}_1(S_{1,1},\mathbb{Z})}

\newcommand{\shomolz}{\textnormal{H}_1(S_{g},\mathbb{Z})}
\newcommand{\hm}{\mathcal H_g(\,\mu\,)}
\newcommand{\chia}{\chi_{a}}

\theoremstyle{plain}                    
\newtheorem{thm}{Theorem}[section]

\newtheorem{thma}{Theorem}

\newtheorem{lem}[thm]{Lemma}
\newtheorem{prop}[thm]{Proposition}
\newtheorem{cor}[thm]{Corollary}

\theoremstyle{definition}
\newtheorem{defn}[thm]{Definition}

\newtheorem{ex}[thm]{Example}

\newtheorem{rmk}[thm]{Remark}



\tikzstyle{rb} = [rectangle, rounded corners, minimum width=3cm, minimum height=1cm, text width=3cm, text centered, draw=black, fill=blue!30]
\tikzstyle{sb} = [rectangle, minimum width=2cm, minimum height=1cm, text width=3cm, text centered, draw=black, fill=violet!30]


\title[Relative period realization of holomorphic differentials with prescribed invariants]{Relative period realization of holomorphic differentials with prescribed invariants}

\author{Dawei Chen}
\address[Dawei Chen]{Department of Mathematics, Boston College, Chestnut Hill, MA 02467, USA}
\email{dawei.chen@bc.edu}

\thanks{Research of D.C. is supported in part by National Science Foundation Grant DMS-2301030, Simons  Travel Support for Mathematicians, and Simons Fellowship.}

\author{Gianluca Faraco}
\address[Gianluca Faraco]{Dipartimento di Matematica e Applicazioni U5, Universita` degli Studi di Milano-Bicocca, Via Cozzi 55, 20125 Milano, Italy}
\email{gianluca.faraco@unimib.it}

\thanks{Research of G.F. is partially supported by GNSAGA, a division of INdAM.}

\date{\today}

\begin{document}

\keywords{}
\subjclass[2020]{57M50; 32G15; 14H15}%
\dedicatory{}

\begin{abstract}
We provide a complete description of realizable relative period representations for holomorphic differentials on Riemann surfaces with prescribed orders of zeros and additional invariants given by the hyperelliptic structure and spin parity. This answers a question posed by Simion Filip. 
\end{abstract}

\maketitle
\tableofcontents

\section{Introduction}

\noindent Let $\mu = (a_o, \ldots, a_k)$ be a signature of holomorphic differentials, where $a_i\geq 0$, and we allow $a_i = 0$ to record a marked ordinary point as a zero of order zero if necessary. Let $\hm$ denote the stratum of pairs $(X,\omega)$ where $X\in\mathcal M_g$ is a compact Riemann surface and $\omega \in\Omega(\,X\,)$ is a holomorphic differential on $X$. Every such a pair yields a representation 
\begin{equation}\label{eq:absrep}
    \chi_a\colon\ahomolz\longrightarrow \mathbb{C} \,\, \text{ such that } \,\, \gamma\longmapsto\int_\gamma \omega,
\end{equation}
known as the period character. Throughout the present paper, we refer to $\chi_a$ as the \textit{absolute period} representation. If we fix a set $Z\subset (X,\omega)$ of marked points, we define the \textit{relative period} representation as the map
\begin{equation}\label{eq:relrep}
    \chi\colon \rhomolz\longrightarrow \mathbb{C}.
\end{equation}

\noindent In \cite{OH} Haupt provided necessary and sufficient conditions for a representation $\chia\colon\ahomolz\longrightarrow \mathbb C$ to arise as the period character of some pair $(X,\omega)$; in \S\ref{sssec:vol} we refer to them as \textit{Haupt's conditions}. The same result has been subsequently rediscovered by Kapovich in \cite{KM2} by using Ratner's theory. Realizing a representation as a character in a prescribed stratum turns out to be a more subtle problem because, in the realizing process, the orders of zeros can no longer be ignored. In the same spirit of \cite{KM2}, Le Fils described in \cite{fils} necessary and sufficient conditions for a representation $\chi$ to be a character in a given stratum. Around the same time, Bainbridge--Johnson--Judge--Park in \cite{BJJP}  provided, with an independent and alternative approach, necessary and sufficient conditions for a representation to be realized in a connected component of a prescribed stratum. In the present paper, we aim to extend the study to the realization of relative periods for holomorphic differentials. 

\smallskip

\noindent Given a closed surface $S_{g}$ and a finite set of marked points $Z=\{z_o,\dots,z_k\}\subset S_{g}$, we shall consider a representation \(\chi\colon\textnormal{H}_1(\,S_g,\,Z,\,\mathbb Z\,)\longrightarrow \mathbb{C},\)
and wonder if there is a compact Riemann surface $X\in\mathcal M_g$ and a differential $\omega\in\Omega(\,X\,)$ with $k+1$ zeros at \(Z\) with relative period character $\chi$. In particular, for any given signature \(\mu\) 
we wonder if there exists a differential $\omega$ on $X$ with zeros of order $a_i$ at $z_i$ such that the period representation of $\omega$ is $\chi$. In this case we shall say that $\chi$ is a realizable relative period representation in the stratum $\hm$. See \S\ref{sec:absrelper} for a more detailed explanation of our problem. The main result of our paper is the following

\begin{thma}\label{thm:mainthm}
    Let \(\chi\colon\textnormal{H}_1(S_{g},\,Z,\,\mathbb Z)\longrightarrow \mathbb{C}\) be a relative period representation with absolute period representation \(\chi_a\). Let \(\mu\) be any signature of holomorphic differential. If \(\chi_a\) can be realized in \(\hm\) then \(\chi\) can be realized in every connected component of the same stratum.
\end{thma}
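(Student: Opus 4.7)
The strategy is to realize $\chi_a$ in each chosen connected component of $\hm$, and then to modify the resulting pair within the component by isoperiodic (rel) deformations to match the full relative character $\chi$. Fix a connected component $\mathcal{C}$ of $\hm$. I would first produce some $(X_0,\omega_0)\in\mathcal{C}$ with absolute period character $\chi_a$: by hypothesis $\chi_a$ is realized somewhere in $\hm$, and realization in $\mathcal{C}$ would follow by comparing the conditions of the Bainbridge--Johnson--Judge--Park and Le Fils theorems for each connected component with those for the full stratum, observing that the extra invariants distinguishing components (hyperellipticity and spin parity) impose no new constraint on the absolute periods beyond the Haupt-type conditions already satisfied by $\chi_a$.

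Given such $(X_0,\omega_0)$, its relative period character $\chi_0$ differs from the target $\chi$ by $\delta = \chi - \chi_0$, which vanishes on absolute homology and is therefore encoded by the $k$ complex numbers $\delta_i = \delta(\sigma_i)$, where $\sigma_i$ is a relative $1$-chain from $z_0$ to $z_i$. The problem reduces to deforming $(X_0,\omega_0)$ within $\mathcal{C}$ by an isoperiodic deformation realizing $\delta$. Locally the rel foliation on $\hm$ has $k$-dimensional complex leaves, and the projection onto the relative periods modulo the absolute ones is a local biholomorphism on each leaf, so any sufficiently small $\delta$ can be realized immediately.

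To globalize this local construction and realize any prescribed $\delta$, I would carry out explicit flat-geometric surgeries on $(X_0,\omega_0)$: slit-and-glue operations along simple arcs joining $z_0$ to the zeros $z_i$, inserting flat parallelograms of prescribed complex widths. Such surgeries change the relative period from $z_0$ to $z_i$ by exactly $\delta_i$ while preserving the absolute periods, the orders of the zeros, and the combinatorial invariants that determine $\mathcal{C}$. Iterating over $i = 1, \dots, k$ yields the desired realization of $\chi$ in $\mathcal{C}$.

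The main obstacle is showing that these surgeries can be performed globally, for arbitrary $\delta_i$ and not only small ones, without colliding zeros, degenerating the Riemann surface, or escaping the connected component $\mathcal{C}$. For small $\delta_i$ this is automatic from the local structure of the rel foliation; for large ones, one can iterate smaller surgeries and combine them with a continuation argument along the rel leaf through $(X_0,\omega_0)$. The most delicate case is when the absolute period lattice $\chi_a(\shomolz)$ has low rank in $\mathbb{C}$, so that the image of the developing map on $X_0\setminus Z$ is constrained and the slit paths must be chosen to take advantage of the flat geometry rather than of dense lattice directions; I expect this to be where the heart of the argument lies.
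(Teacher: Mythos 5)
Your broad strategy — realize $\chi_a$ in the chosen component and then adjust relative periods by isoperiodic deformations — is in the spirit of the paper, but the specific surgery you propose is not isoperiodic. Slitting along an arc from $z_0$ to $z_i$ and inserting a flat parallelogram of nonzero complex width increases the total area (hence changes $\mathrm{vol}(\chi_a)$, which is determined by the absolute periods), raises the genus, and increases the cone angles at the endpoints; that is the "bubbling a handle" operation, which the paper uses for an entirely different purpose. The isoperiodic surgery that actually accomplishes what you want is the Schiffer variation / movement of zeros: cut along a collection of \emph{twin paths} emanating from $z_i$ (all developing to the same segment in $\mathbb{C}$) and reglue them cyclically. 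No material is inserted, the absolute periods and zero orders are preserved, and the developed image of $z_i$ slides along a segment. This distinction is not cosmetic; your claim that the surgery "preserves the absolute periods" and "the orders of the zeros" is false as written.

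The deeper gap is the case where $\chi_a$ is a lattice representation, which you correctly flag as the heart of the matter but leave vague. There, the paper does not run a continuation argument along rel leaves at all. Instead it treats the problem as one of constructing branched covers of $\mathbb{C}/\mathbb{Z}[\rm i]$ with prescribed branching data: it decomposes $Z$ into groups of zeros with mutually integral relative periods, introduces a strengthened volume bound ${\rm vol}(\chi_a)\ge {\rm m}(\chi,\mu^\vdash)$ encoding the degree of the resulting cover, and then builds explicit square-tiled surfaces (lined-up squares, slit-torus constructions, etc.) realizing each sub-signature while tracking the hyperelliptic structure and spin parity directly in the flat picture. A continuation argument along a rel leaf would have to show that the leaf stays in the chosen component, never collides branch points, and never degenerates — precisely the issues you acknowledge but defer. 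Since the rel leaves for a lattice $\chi_a$ have the required period vectors only at isolated points, the move to a constructive, combinatorial argument is forced, not optional. Also note that even in the non-discrete case the paper's route is more elementary than yours: it realizes $\chi_a$ in the minimal stratum $\mathcal{H}_g(2g-2)$ (which any component of $\hm$ degenerates to) and then inductively breaks up the zero into zeros of the prescribed orders, ordering the relative periods by modulus so that each split happens in a sufficiently small neighborhood; this sidesteps any global continuation entirely.
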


\noindent In other words, if an absolute period representation \(\chia\) can be realized as the absolute period character of some pair \((X,\omega)\) in a given stratum, then we can also find a holomorphic differential with the desired relative periods in the same stratum.

\smallskip

\noindent Upon the completion of this work, 
we learned that the main result in Theorem~\ref{thm:mainthm} has also been obtained by Le Fils independently in \cite{filsrel} with an approach that shares similar ideas but is developed in a different manner from ours. 

\subsection{From the complex-analytic perspective to geometry}\label{ssec:gpovts} 

\noindent The period realization problem can be formulated in complex-analytic terms. Assume \( g \geq 2 \) and let \( \chia\colon\textnormal{H}_1(S_{g},\,\mathbb Z)\longrightarrow \mathbb{C} \) be an absolute period representation. 
Finding a pair \( (X, \omega) \in \mathcal{H}_g(\,\mu\,) \) with period character \( \chi \) amounts to determine a holomorphic multivalued function \(F\colon X\longrightarrow \mathbb C\) arising as a solution of the following first-order linear ODE on \(X\):
\begin{equation}\label{eq:linode}
    dF=\omega
\end{equation}
such that its monodromy representation, say \( \rho \colon \pi_1(\,S_g\,) \to \mathbb{C} \), reduces to the representation \( \chia \) in homology. A solution \(F\) of the differential equation \eqref{eq:linode} lifts to a mapping \( \textnormal{dev} \colon \mathbb{H} \longrightarrow \mathbb{C} \) that satisfies the following conditions:
\begin{itemize}
    \item[1.] \( \textnormal{dev} \) is locally univalent outside a discrete set that corresponds to the lift of the set of zeros of \(\omega\), and
    \item[2.] \( \textnormal{dev} \) is equivariant with respect to a representation \( \rho \colon \pi_1(\,S_g\,) \longrightarrow \mathbb{C} \) in the sense that
    \begin{equation}
        \textnormal{dev}(\,\gamma\,\cdot\,z\,)\,=\,\rho(\,\gamma\,)\,+\,\textnormal{dev}(\,z\,)
    \end{equation}
    and that reduces to \(\chia\) in homology. 
\end{itemize}
We define this kind of functions as \textit{developing maps} and the reason of such a terminology will become clear later on. Vice versa, every holomorphic \(\rho-\)equivariant map \( \textnormal{dev} \colon \mathbb{H} \longrightarrow \mathbb{C} \) which is locally univalent outside an invariant discrete set yields a pair \((X,\omega)\). By Uniformization Theorem, \(X \cong \mathbb H/\pi_1(\,S_g\,)\) and \(\omega\in\Omega(\,X\,)\) is defined as the projection of the abelian differential \(\textnormal{dev}^*dz\) on \(\mathbb H\) via the covering projection. Thus finding a pair \( (X, \omega) \in \mathcal{H}_g(\,\mu\,) \) with prescribed period character \( \chi \) amounts to determining a map \( \textnormal{dev} \colon \mathbb{H} \longrightarrow \mathbb{C} \) that satisfies the properties above.

\smallskip

\noindent This perspective allows us to transform a problem of complex-analytic nature into one with a more geometric flavor. Indeed, such a developing map determines an atlas of charts with values in \( \mathbb{C} \) such that the transition maps are translations. In other words, a developing map induces a branched geometric structure locally modeled on \( (\mathbb{C}, \mathbb{C}) \), known as a \textit{translation surface}. On the other hand, every branched \( (\mathbb{C}, \mathbb{C}) \) structure on a closed surface \(S_g\) yields a developing map satisfying the properties above. See \S\ref{ssec:transurf} for more details. In turn, every such an atlas determines a triangulation of the surface itself, consisting of triangles that are isometric to Euclidean triangles. Some of these triangles can be combined to form polygons in the complex plane. Thus, we can state that a translation structure is always obtained by gluing a finite number of polygons in the plane via translations, identifying edges with other edges and vertices with other vertices. In the present paper, we shall adopt this approach. In fact, we shall construct pairs \( (X, \omega) \) with a prescribed structure by gluing polygons, typically squares and rectangles, see \S\ref{sec:sqrspesper}. Finally, we observe that, so far, we have merely reformulated the original problem in geometric terms without addressing the prescription of certain invariants of interest. For example, this description does not take into account the number and orders of the zeros of the differential. Similarly, it does not specify additional hyperelliptic and spin structures. In this sense, we will revisit these invariants from a geometric perspective.



\subsection{The period map} Let \(\mathcal H_g(\,\mu\,)\) be a stratum of holomorphic differentials. The so-called \textit{period map} defined as the association that maps an abelian differential $\omega$ to its period character, see \eqref{permap}. Unfortunately, such a map is not globally well-defined on \(\mathcal H_{g}(\,\mu\,)\), see the discussion in \cite[Section \S4.1.1]{NS}. In order to have a well-defined period map, we need to consider a suitable covering space that we denote by $\Omega\mathcal S_{g}(\,\mu\,)$.

\smallskip

\noindent Fix a reference closed connected oriented surface \(S_g\) of genus \(g\ge2\) and a set \(Z\) of \(k+1\) marked points. For any pair \((X,\omega)\in\mathcal H_g(\,\mu\,)\), a \textit{marking in homology} is an isomorphism \(m\colon \textnormal{H}_1(\,S_g,\,Z,\,\mathbb Z\,)\longrightarrow \textnormal{H}_1(\,X,\,Z(\,\omega\,),\,\mathbb Z\,)\). Here two triples \((X,\omega,\,m_X)\) and \((Y,\xi,\,m_Y)\) are equivalent, if there exists a biholomorphism of marked Riemann surfaces, say \(f\colon X \longrightarrow Y\), such that \(\omega=f^*\xi\) and \(m_Y=f_*\circ m_X\). A stratum of \textit{homologically marked translation surfaces} is defined as the space of equivalent classes of translation surfaces marked in homology and denoted by \(\Omega\mathcal S_g(\,\mu\,)\). The mapping class group \(\modul\) acts on \(\mathcal S_{g}\) by precomposition on the marking and such an action yields a covering map \(\Omega\mathcal S_{g}(\,\mu\,)\longrightarrow \mathcal H_{g}(\,\mu\,)\)
with covering group \(\spz\) by construction. The relative period map is then defined as the association
\begin{equation}\label{permap}
    \text{relPer}\colon\Omega\mathcal{S}_{g}(\,\mu\,)\longrightarrow \textnormal{H}^1\big(\,S_g,\,Z,\,\mathbb C\,\big)\cong\text{Hom}\,\Big(\,\textnormal{H}_1(\,S_g,\,Z,\,\mathbb Z\,),\,\mathbb{C}\,\Big)
\end{equation}
that maps a marked translation surface \((X,\omega,m)\) to its relative period character. It is well-known that this map is holomorphic and locally injective. As a consequence, the unmarked stratum \(\hm\) is also locally modeled on \(\textnormal{H}^1\big(\,S_g,\,Z,\,\mathbb C\,\big)\). 

\smallskip

\noindent Following \cite[Section \S3.1.3]{FS}, there is a natural short exact sequence induced by the long exact sequence of the pair \((\,S_g,\,Z\,)\), that is
\begin{equation}
    0 \longrightarrow \textnormal{H}_1\big(\,S_g,\,\mathbb Z\,\big) \longrightarrow \textnormal{H}_1\big(\,S_g,\,Z,\,\mathbb Z\,\big) \longrightarrow \widetilde{\textnormal{H}}_o\big(\,Z,\,\mathbb Z\,\big) \longrightarrow 0
\end{equation}
where \(\widetilde{\textnormal{H}}_o\big(\,Z,\,\mathbb Z\,\big)\) denotes  the reduced \(0-\)th homology of \(Z\). The dual sequence in cohomology with complex coefficients yields a surjection \(\textnormal{abs}\colon\textnormal{H}^1\big(\,S_g,\,Z,\,\mathbb C\,\big)\longrightarrow \textnormal{H}^1\big(\,S_g,\,\mathbb C\,\big)\) and the composition map \(\textnormal{abs}\circ\textnormal{relPer}\) yields an association whose image is the set of all absolute period representations that arise as the absolute part \(\chia\) of some relative period representation \(\chi\). 

\smallskip

\noindent In the same way, it is possible to define the absolute period map, which assigns to a marked translation surface its absolute period character:
\begin{equation}\label{abspermap}
    \text{Per}\colon\Omega\mathcal{S}_{g}(\,\mu\,)\longrightarrow \textnormal{H}^1\big(\,S_g,\,\mathbb C\,\big)
\end{equation}
\noindent whose image has been completely determined by \cite{fils} and \cite{BJJP}. In principle, the following inclusion holds: \(\textnormal{Im}\big(\,\textnormal{abs}\circ\textnormal{relPer}\,\big)\subseteq\textnormal{Im}\big(\,\textnormal{Per}\,\big)\). Our main Theorem \ref{thm:mainthm} states that, if we forget the marking in homology, then
\begin{equation}
    \textnormal{Im}\big(\,\textnormal{abs}\circ\textnormal{relPer}\,\big)\,=\,\textnormal{Im}\big(\,\textnormal{Per}\,\big)
\end{equation}
for every stratum \(\hm\), which thus provides a complete answer to the question posed by Simion Filip \cite[Question 3.1.15]{FS}. 

\medskip

\textit{A short digression on the fibres of the period map.} In the past decade, understanding the fibers of the absolute period map has been the subject of intense study from various perspectives. This line of research, initiated by McMullen in \cite{McM} and \cite{McM2} for surfaces of genus \( g = 2,3 \), was later extended by Hamenst\"adt in \cite{UH} and independently by \cite{CDF2}. Both works prove the ergodicity of the isoperiodic foliation for the principal stratum \(\mathcal H_g(\,1,\dots,1\,)\) for every genus \(g\ge2\). More recently, the study of these fibers was further developed by \cite{KW} and, finally, by Chaika--Weiss in \cite{CW}. We note that, since the relative period map \eqref{permap} is locally injective, the fibers are discrete.

\subsection{The strategy of proving the main Theorem} Our proof builds on the previous works of Le Fils \cite{fils} and Bainbridge--Johnson--Judge--Park \cite{BJJP}. 
We will primarily distinguish between two cases, depending on whether the absolute period character is discrete or not. The latter case is the simpler one to handle. From the aforementioned works, we know that if a representation is not discrete, it can be realized in every connected component of any stratum. In particular, it can be realized in every connected component of the minimal stratum \(\mathcal H_g(\,2g-2\,)\).  Based on this assumption, we will show that zeros can be split using a well-known surgery technique from the literature, which will be described in detail for completeness in \S\ref{sec:surgeries}. Using an inductive argument, we will show that the same representation can be realized in any other stratum while preserving, when necessary, additional structures such as the spin parity and the hyperelliptic structure. In this case, our proof does not directly construct the structures using polygons, as hinted in \S\ref{ssec:gpovts}. On the other hand, if the representation is discrete, in \S\ref{sec:sqrspesper}, we will construct square-tiled surfaces that realize the prescribed zeros in every connected component of every stratum. As we will see, it will be necessary to introduce a stronger necessary condition than the one imposed by condition \eqref{eq:hcond2s} below, and this stronger necessary condition will turn out to be sufficient as well. In this case, the realization is direct, and most of the current work consists of constructing these structures.

\subsection{Meromorphic differentials} Following the extensive study of holomorphic differentials on Riemann surfaces, the investigation of meromorphic differentials has also gained significant attention in recent years. A meromorphic differential \(\omega\) on a compact Riemann surface \(X\in\mathcal M_g\) yields an absolute period representation \(\chia\colon \textnormal{H}_1(\,X\setminus\,P(\,\omega\,),\,\mathbb Z\,)\longrightarrow \mathbb C\), where \(P(\,\omega\,)\) is the set of poles of \(\omega\). It is not difficult to show that each such representation arises as the period character of a meromorphic differential on a given Riemann surface, see \cite{CFG}. In other words, unlike the holomorphic case, there are no obstructions to the realization of a given representation. Less obvious is the realization of such a representation in a given stratum or connected component of a stratum. Both issues have been thoroughly addressed in the works \cite{CFG} and \cite{CF}. In \cite{fargup}, the authors provide an alternative version in which all singularities are at the punctures. Even in the meromorphic case, it is possible to define an absolute period map, and the results of these works fully describe the image when restricted to a stratum or its connected component. The study of the isoperiodic fibration is still an open problem, and only some special cases in this direction are currently known, see \cite{CD} and \cite{FTZ}. As in the holomorphic case, it is possible to define a relative homology representation, that is, a representation \(\chi\colon\rhomolzp\longrightarrow \mathbb C\). 
We plan to determine which relative homology representations can arise from meromorphic differentials in a given stratum in a future work. Indeed, the method we use in Section \S\ref{sec:notdis} can directly extend to the case of meromorphic differentials, and thus, realizing discrete representations will be the major challenge for the meromorphic case. 




\subsection{Structure of the paper} The present paper is organized as follows. In \S\ref{sec:absrelper} we review the  definition and properties of translation surfaces as well as their volumes, hyperelliptic structure, and spin structure. In \S\ref{sec:surgeries} we recall some topological surgeries of breaking up a zero and the movements of branched points which were systematically used in \cite{CDF} and \cite{CDF2}. In \S\ref{sec:notdis} we prove Theorem \ref{thm:mainthm} for the case of non-discrete representations. Finally, in \S\ref{sec:sqrspesper} we prove Theorem \ref{thm:mainthm} for the remaining case of discrete representations. In particular, we will utilize the realization of square-tiled surfaces with prescribed invariants.

\subsection*{Acknowledgments} D.C. would like to thank the organizers Viveka Erlandsson, Scott Mullane, and Paul Norbury of the MATRIX program "Teichm\"uller Theory and Flat Structures" in Creswick for their invitation and hospitality. G.F. would like to thank Guillaume Tahar and BIMSA in Beijing for their hospitality. The authors are also grateful to Thomas Le Fils for sharing his result and for the inspiring discussion upon the completion of the current paper. 

\medskip

\section{Preliminaries of translation surfaces and period representations}
\label{sec:absrelper}

\subsection{Strata of differentials}\label{ssec:strata} Let $\Omega\mathcal M_g$ be the Hodge bundle over $\mathcal M_g$ whose fiber over a compact Riemann surface $X$ is the space $\Omega(\,X\,)=\textnormal{H}^o(\,X,\, K_X\,)$ of holomorphic differentials on $X$. The moduli space $\Omega\mathcal M_g$ admits a natural stratification given by unordered partitions of $2g-2$. Let $\mu=(a_o,\dots,a_k)$ be any partition of $2g-2$. If an abelian differential $\omega$ has exactly $k+1$ zeros of respective orders $a_o,\dots,a_k$, then we say that $\omega$ is a holomorphic differential of type $\mu$.  As already alluded in the introduction, we denote a stratum of type $\mu$ as $\mathcal H_g(\mu)$. Therefore, a stratum parameterizes equivalent classes of $(X, \omega)$ up to biholomorphisms, where $X$ is a Riemann surface of genus $g$ and $\omega$ is a holomorphic differential whose zeros have orders prescribed by $\mu$. These strata generally fails to be connected and, in fact, for $g\ge2$ they generally exhibit up to three connected components according to the presence of a hyperelliptic involution or a topological invariant known as the spin structure, see \cite{KZ}.

\subsection{Absolute and relative periods}

\noindent In this section, we give a detailed description of the problem we are interested in. For a closed surface $S_{g}$, let $Z=\{z_o,\dots,z_k\}\subset S_{g}$ be a finite collection of $k+1$ points. The absolute homology $\textnormal{H}_1(\,S_{g},\,\mathbb Z\,)$ and the relative homology $\textnormal{H}_1(\,S_{g},\,Z,\,\mathbb Z\,)$ fit in a short exact sequence
\begin{equation}\label{eq:exactseq}
    0 \longrightarrow \textnormal{H}_1(\,S_{g},\,\mathbb Z\,) \longrightarrow \textnormal{H}_1(\,S_{g},\,Z,\,\mathbb Z\,) \longrightarrow \mathfrak{S}\longrightarrow 0
\end{equation}
\noindent where $\mathfrak S$ is the group generated by the relative periods, that is
\begin{equation}
    \mathfrak S \,=\, \left\langle \,\,\int_{z_o}^{z_i}\,\,\big|\,i=1,\dots,k\,\,\right\rangle,
\end{equation} 

\noindent and $z_o$ has been chosen as the base point. In general, there is no preferred path joining $z_o$ with another point in $Z$, and hence there is no preferred relative period. However, it is worth noticing that any two paths joining $z_o$ with $z_i$, for a fixed  $i=1,\dots,k$, always differ by a closed curve. As a consequence, two relative periods with the same starting point and endpoints always differ by an absolute period.  This leads us to state our problem as follows.

\smallskip

\begin{defn}
    Let $S_{g}$ be a closed surface and let $Z\subset S_{g}$ be a finite set. Let $\chi\colon \textnormal{H}_1(\,S_{g},\,Z,\,\mathbb Z\,)\longrightarrow \mathbb{C}$ be a representation and let \(\chi_a\) be the induced absolute period representation. Given a signature $\mu$ of holomorphic differentials, we say that $\chi$ \textit{can be realized} in $\mathcal H_g(\,\mu\,)$,  if there exists a translation surface $(X,\omega)\in\mathcal H_g(\,\mu\,)$ with absolute period character $\chi_a$ and, given its set of zeros $Z(\,\omega\,)$, then
    \begin{itemize}
        \item[1.] $|\,Z(\omega)\,|=|\,Z\,|$, and 
        \item[2.] there is a labeling $\sigma\colon \{\,0,\dots,k\,\}\longrightarrow Z(\,\omega\,)$ such that for any $i=1,\dots,k$ there is a path $\delta_i$ that joins the zeros $z_o$ and $z_i$ such that
    \begin{equation}
        \int_{\delta_i} \omega \,=\,\chi(\,\delta_i\,).
    \end{equation}
    \end{itemize}
\end{defn}

\noindent According to the definition above, in the present paper we aim to provide necessary and sufficient conditions under which a representation $\chi$ as in \eqref{eq:relrep} can appear as the period character of some element $(X,\omega)$ in a given stratum.

\begin{rmk}\label{rmk:notrirep}
    We remark that when $|\,Z\,|=1$, then $\chi\colon \textnormal{H}_1(S_{g\,},\,Z,\,\mathbb Z\,)\longrightarrow \mathbb{C}$ can be realized in the stratum $\mathcal H_g(\,2g-2\,)$ if and only if the absolute period character can be realized in the same stratum.
\end{rmk}

\medskip

\textit{Our convention.} In our setting, all relative periods are piecewise geodesics paths joining two distinct zeros. We allow relative periods to be self-intersecting, \textit{i.e.} passing through the same zero several times. However, notice that every path joining two distinct zeros has in its homotopy class a continuous embedded paths. We also allow two paths $\delta_i$ and $\delta_j$ to intersect at points other than $z_o$. Why do we need to allow these conditions? The reason is that, as already alluded above, in principle there is no canonical way to choose a relative period. Therefore, in our setting, we require that at least one path has the prescribed relative period. In particular, such a path may very well be the composition of some closed curves, not necessarily simple, based at $z_o$ with a given absolute period.

\subsection{Translation surfaces}\label{ssec:transurf}
As already alluded in \S\ref{ssec:gpovts}, a \textit{translation surface} is a branched $(\C,\C)$-structure, \textit{i.e.} the datum of a maximal atlas where local charts in $\C$ have the form $z\longmapsto z^k$, for $k\ge1$, with transition maps given by translations on their overlaps. Any atlas thus defined yields an underlying compact Riemann surface $X$ and the pullbacks of the $1$-form $dz$ on $\C$ via local charts globalize to a holomorphic differential, denoted by $\omega$ on $X$. Vice versa, a holomorphic differential $\omega$ on a compact Riemann surface $X$ defines a singular Euclidean metric with isolated singularities corresponding to the zeros of $\omega$. In a neighborhood of a point $\pto P$ which is not a zero of $\omega$, a local coordinate is defined as
\begin{equation}
    z(\,\pto Q\,)=\int_{\,\pto P}^{\,\pto Q} \omega 
\end{equation} 
in which $\omega=dz$, and the coordinates of two overlapping neighborhoods differ by a translation $z\mapsto z+c$ for some $c\in\mathbb C$. Around a zero, say $\pto P$ of order $k\ge1$, there exists a local coordinate $z$ such that $\omega=z^kdz$. The zero point $\pto P$ is also called a \textit{branch point} because any local chart around it is locally a branched covering of degree $k+1$ over $\C$ which is totally ramified at $\pto P$.

\begin{defn}[Translation surfaces
]\label{tswp}
Let $X\in\mathcal M_g$ be a compact Riemann surface and let $\omega\in\Omega(\,X\,)$ be a holomorphic differential. We define the corresponding \textit{translation surface} to be the geometric structure on $X$ induced by $\omega$. 
\end{defn}

\noindent 
Given a translation structure $(X,\omega)$ on a closed surface $S_{g}$ with \(g\ge2\), local charts globalize to a multivalued function, say \(F\colon X \longrightarrow \mathbb C\), that lifts to a holomorphic \(\rho-\)equivariant map \(\textnormal{dev}\colon \mathbb H\longrightarrow\mathbb C\), where \(\mathbb H\) is the universal cover of \(X\) and \(\rho\colon\pi_1(\,X\,)\longrightarrow \mathbb C\) is a representation. Note that \(\rho\) naturally reduces to a representation in homology \(\chia\colon\ahomolz\longrightarrow \mathbb C\) because the target is abelian. Vice versa, a \(\rho-\)equivariant developing map \(\textnormal{dev}\colon \mathbb H\longrightarrow\mathbb C\), determines a branched \((\,\mathbb C,\,\mathbb C\,)\)-structure, namely a translation surface \((X,\omega)\), by using local inverses of the covering projection \(\mathbb H\longrightarrow X\). Notice that here we have tacitly fixed a conformal identification \(\widetilde X\cong \mathbb H\) where \(\pi_1(\,X\,)\) acts on \(\mathbb H\) by M\"obius transformations. It is easy to verify that 
\begin{equation}\label{eq:perchar}
    \chia(\,\gamma\,)=\int_\gamma\omega,
\end{equation}
and hence it agrees with the period character defined by \(\omega\) on \(X\). Thus a period character is nothing but the \textit{holonomy representation} of the translation structure defined by $\omega$. The following Lemma establishes the relation between holonomy representations and period characters.

\begin{lem}
A representation $\chia\colon\ahomolz\longrightarrow \C$ is the period of some abelian differential $\omega\in\Omega(\,X\,)$ with respect to some complex structure $X$ on $S_{g}$ if and only if it is the holonomy of the translation structure on $S_{g}$ determined by $\omega$.
\end{lem}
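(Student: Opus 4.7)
The plan is to show directly that the two representations are identical by unpacking their definitions on the universal cover, since both are built from the same data: the abelian differential $\omega$ and the developing map that it determines.

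First, I would set up notation carefully. Given $(X,\omega)$, the translation structure on $S_g$ determined by $\omega$ comes equipped with a developing map $\textnormal{dev}\colon\mathbb{H}\longrightarrow \mathbb C$ and a holonomy representation $\rho\colon\pi_1(S_g)\longrightarrow \mathbb C$, satisfying the equivariance $\textnormal{dev}(\gamma\cdot z)=\rho(\gamma)+\textnormal{dev}(z)$. Because the local charts of the translation structure are, by construction, primitives of $\omega$, one has the key identity $\textnormal{dev}^*dz=\pi^*\omega$ on $\mathbb H$, where $\pi\colon\mathbb H\longrightarrow X$ is the universal covering projection.

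Next, I would verify the identity $\rho(\gamma)=\int_\gamma\omega$ for every loop $\gamma$ based at a point $x_o\in X$. Fix a lift $\tilde x_o\in\mathbb H$ and let $\tilde\gamma$ denote the lift of $\gamma$ starting at $\tilde x_o$, so that its endpoint is $\gamma\cdot\tilde x_o$. A direct computation yields
\begin{equation}
\int_\gamma\omega\,=\,\int_{\tilde\gamma}\pi^*\omega\,=\,\int_{\tilde\gamma}\textnormal{dev}^*dz\,=\,\textnormal{dev}(\gamma\cdot\tilde x_o)-\textnormal{dev}(\tilde x_o)\,=\,\rho(\gamma),
\end{equation}
where the last equality uses the equivariance of $\textnormal{dev}$. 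Since the target $\mathbb C$ is abelian, $\rho$ factors through the Hurewicz map and hence descends to a homology representation $\ahomolz\longrightarrow \mathbb C$ which, by the displayed chain of equalities, coincides with the period character $\chia$ defined via \eqref{eq:perchar}. This proves the forward direction.

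For the converse, I would observe that the construction is essentially reversible: any $\rho$-equivariant developing map $\textnormal{dev}\colon\mathbb H\longrightarrow\mathbb C$ that is locally univalent outside a discrete set determines, via local inverses of the covering projection, an atlas of charts on $S_g$ whose transition maps are translations, and the pullback of $dz$ globalizes to a holomorphic differential $\omega\in\Omega(X)$. The computation above then shows that the holonomy of this translation structure is precisely the period character of $\omega$. I do not anticipate any genuine obstacle here; the content of the lemma is essentially bookkeeping, and the only subtle point is to observe that $\mathbb C$ being abelian is what allows passage from $\pi_1(S_g)$ to $\ahomolz$ without loss of information.
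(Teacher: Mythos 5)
Your proof is correct and follows the same line the paper itself sketches in the discussion immediately preceding the lemma (where the authors set up the developing map, note the equivariance, assert equation~\eqref{eq:perchar} "is easy to verify," and pass to homology because $\mathbb{C}$ is abelian). You have simply spelled out the "easy to verify" computation via the chain of equalities on the universal cover, which is exactly the intended argument.
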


\noindent This twofold nature of a representation $\chia$ is what permits to tackle our problem by adopting a geometric approach as explained in \S\ref{ssec:gpovts}. More precisely, in order to realize a representation $\chia$ on a given stratum of differentials, we will  realize it as the holonomy of some translation surface, say $(X,\omega)$, with prescribed zeros and, whenever they are defined, with prescribed spin structure or hyperelliptic involution. 

\begin{rmk}\label{rmk:STS}
    There is a special class of translation surfaces $(X,\omega)$ arising as finite covers $\pi\colon X\longrightarrow \mathbb C\,/\,\ziz$ branched at only one point and $\omega=\pi^*dz$. These are  well-known in literature as \textit{origami} and they form a class of interesting and rich objects to study. Notice that these surfaces are naturally tiled by unit squares, and hence they are commonly known also as \textit{square-tiled surfaces}. For our purposes in \S\ref{sec:sqrspesper}, we need to extend this notion to a broader class of translation surfaces by allowing more branch points. Since all surfaces in this larger class are also square-tiled, in what follows we convey that a square-tiled surface is simply a translation surface arising from a finite branched cover over $\mathbb C\,/\,\ziz$ and reserve origami for those covers that are branched at a single point. See also Definition \ref{defn:transurf}.
\end{rmk}

\begin{rmk}\label{partrans}
Let $(X,\omega)$ be a translation surface, possibly with poles, let $X^*$ denote $X\setminus Z(\,\omega\,)$ and pick any point $x_o\in X^*$. Since the structure is flat, the parallel transport induced by the flat connection yields a homomorphism from $\pi_1(X^*,\,x_o)$ to $\textnormal{SO}(2,\mathbb R)\cong\mathbb S^1$ which acts on the tangent space of $x_o$. Since $\mathbb S^1$ is abelian, such a homomorphism factors through the homology group, and hence we have a well-defined homomorphism $\text{PT}\colon\textnormal{H}_1(X^*,\,\mathbb Z)\to \textnormal{SO}(2,\mathbb R)\cong\mathbb S^1$. For translation surfaces, this homomorphism turns out to be trivial in the sense that a parallel transport of a vector tangent to the Riemann surface $X$ along any closed path avoiding the zeros of $\omega$ brings the vector back to itself.
\end{rmk}

\begin{rmk}\label{rmk:foliations}
Let $x_o\in(X,\omega)$ be a regular point, \textit{i.e.} $x_o$ is not a zero of $\omega$. A given tangent direction at $x_o$ can be extended to all other regular points by means of the parallel transport. This yields a non-singular foliation which extends to a singular foliation with singularities at the branch points. Let $z=x+iy$ be a local coordinate at $x_o$. The \textit{horizontal foliation} is the oriented foliation determined by the positive real direction in the coordinate $z$. Notice that this is well-defined because different local coordinates differ by a translation. In the same fashion, the \textit{vertical foliation} is the oriented foliation determined by the positive purely imaginary direction in the coordinate $z$.
\end{rmk}

\subsection{Volume}\label{sssec:vol} We now discuss the notion of \textit{volume} which plays an important role in the theory. For our purposes, we are mostly interested in the algebraic volume which is a topological invariant associated to a representation $\chi$.
\smallskip

\noindent Let us recall this notion in the holomorphic case. For a symplectic basis $\{\alpha_1,\beta_1,\dots,\alpha_g,\beta_g\}$ of $\shomolz$, we define the volume of a representation $\chi\colon\shomolz\longrightarrow \C$ as the quantity
\begin{equation}\label{algvol}
    \text{vol}(\,\chi\,)=\text{vol}(\,\chia\,)=\sum_{i=1}^g \Im\Big(\,\,\overline{\chi(\alpha_i)}\,\chi(\beta_i)\,\,\Big),
\end{equation} 
where $\Im(\,\overline{z}\,w\,)$ is the usual symplectic form on $\C$. As a consequence, this algebraic definition of volume of a character $\chi$ is invariant under precomposition with any automorphisms in $\textnormal{Aut}\,\big(\shomolz\big)\cong\spz$. The image of $\chi$, provided that it has rank $2g$, turns out to be a polarized module.

\begin{rmk}\label{rmk:capersp} We provide here an alternative definition of volume by adopting a complex-analytic perspective. Let $X$ be a compact Riemann surface and let $\omega\in\Omega(\,X\,)$ be a holomorphic differential with period character $\chia$. Let \(F\colon X\longrightarrow \mathbb C\) be a solution of the differential equation \(dF=\omega\) with monodromy representation \(\chia\) as in \S\ref{ssec:gpovts}. Then, we can define the volume of \(\chia\) as
\begin{equation}
\frac{\rm i}{2}\int_X F^*\,\Big( \,dz\wedge d\overline{z}\,\Big)=\frac{\rm i}{2}\int_X \omega\wedge\varpi\,, 
\end{equation}
that is the area of the singular Euclidean metric determined by $\omega$ on $X$. In particular, by means of Riemann's bilinear relations one can show that it agrees with the algebraic definition of volume as in \eqref{algvol}. The reader can consult \cite{NR}.
\end{rmk} 

\smallskip

\textit{Haupt's conditions.} As already alluded in the introduction, there are some obstructions for realizing $\chi$ as the period character of some holomorphic differential $\omega$ on a compact Riemann surface $X$, which are caused by the volume. We recall them here for the reader's convenience. The first requirement is that the volume of $\chi$ has to be positive with respect to some symplectic basis of $\shomolz$. Indeed, the volume equals the area of the surface $X$ endowed with the singular Euclidean metric induced by $\omega$, see Remark \ref{rmk:capersp}. There is a second obstruction that applies in the case $g\geq 2$ and only when the image of $\chi\colon\shomolz\longrightarrow\C$ is a lattice, say $\Lambda$ in $\C$. In fact, it is possible to show that in this special case, $(X,\omega)$ arises from a branched cover of the flat torus $\C/\Lambda$. In particular, the following inequality must hold: 
\begin{equation}\label{eq:hcond2}
    \textnormal{vol}(\,\chia\,)\ge 2\,\text{{area}}(\mathbb C/\Lambda)
\end{equation}
\noindent Haupt's Theorem states that these are the only obstructions for realizing $\chi$ as the period character of some holomorphic differential. 

\smallskip

\noindent These conditions provide necessary and sufficient criteria for the existence of a translation surface with a prescribed period character. However, they do not give any condition for realizing such a surface within a specific connected component of a stratum. In their works, \cite{fils} and \cite{BJJP} independently provided necessary conditions for the realization of a given representation within a specific stratum. If the image of a given representation is \textit{non-discrete}, then the only obstruction to its realization in any connected component of any stratum is the sign of the volume, which must be positive, \textit{i.e.} \(\textnormal{vol}(\,\chia\,)>0\). However, if the image of the representation is a lattice, condition \eqref{eq:hcond2} is no longer sufficient, and the realization within a connected component of a stratum of holomorphic differentials, say \( \mathcal{H}_g(a_0, \dots, a_k) \), is subject to a strengthened version of condition \eqref{eq:hcond2} given as follows:
\begin{equation}\label{eq:hcond2s}
    {\rm vol}(\,\chia\,) \ge \max_{0\le i\le k} \big(\,a_i+1\,\big)\,{\rm area}(\,\mathbb C/\Lambda\,).
\end{equation}
In order to realize a relative period representation \(\chi\), in \S\ref{sec:sqrspesper} we need to strength condition \eqref{eq:hcond2s} even further; see conditions \eqref{eq:neccondtoreal}, \eqref{eq:hol-necessary}, and Remark \ref{rmk:shcd}.

\smallskip

\subsection{Further invariants}\label{ssec:invariants} We now briefly recall a few invariants naturally attached to an absolute period representation $\chia$. We will use two of these to distinguish the connected components of strata whenever they are disconnected. For a more detailed treatment of these invariants in the holomorphic setting, the reader can consult \cite{KZ}.

\smallskip

\subsubsection{Hyperelliptic involution} As already alluded in \S\ref{ssec:strata}, a stratum can be disconnected and its connected components are distinguished by two types of invariants. One of these is \textit{hyperellipticity} or \textit{hyperelliptic structure}. In what follows, we make a direct use of this notion only when $\chi$ has discrete absolute period character $\chia$. We begin with the following

\smallskip

\begin{defn}\label{hypdef}
A translation surface $(X,\omega)$ is said to be \textit{hyperelliptic} if $X$ is hyperelliptic and $\omega$ is anti-invariant under the hyperelliptic involution $\tau$, \textit{i.e.} $\tau^*\omega=-\omega$.
\end{defn}

\noindent For a hyperelliptic translation surface $(X, \omega)$, the hyperelliptic involution $\tau$ realizes an isometry between the singular Euclidean structures determined by the differentials $\omega$ and $-\omega$ on $X$. Therefore, not all strata can admit hyperelliptic translation surfaces. In fact, if $\omega$ has two zeros that are swapped by $\tau$, then they must have the same order. Additionally, if a zero is fixed by $\tau$, then it must have even order.  

\smallskip

\subsubsection{Spin parity}\label{ssec:spin} Another geometric invariant we need to introduce is the spin structure. Let $(X,\omega)$ be a translation surface. Recall from Remark \ref{rmk:foliations} that away from $Z(\,\omega\,)$ there is a well-defined horizontal direction and hence a non-singular horizontal foliation on $X\setminus Z(\,\omega\,)$. Such a foliation extends to a singular horizontal foliation over the zeros of $\omega$. In fact, for a zero, say $\pto P$ of order $k$, there are exactly $k+1$ horizontal directions leaving from $\pto P$. Let $\gamma$ be a simple smooth curve parametrized by the arc length. Assume in addition that $\gamma$ avoids all the zeros of $\omega$. Then, we define the index of $\gamma$ as a numerical invariant given by the comparison of the unit tangent field $\dot\gamma(\,t\,)$ and the unit vector field along $\gamma$ given by unit vectors tangent to the horizontal direction. More precisely, denote by $u(\,t\,)$ the unit vector at $\gamma(\,t\,)$ tangent to the leaf of the horizontal foliation through $\gamma(\,t\,)$. Then, the assignment 
\begin{equation}\label{comphorfol}
    t\longmapsto \theta(\,t\,)=\angle\,\Big(\,\dot\gamma(\,t\,),\,u(\,t\,)\,\Big)
\end{equation} defines a mapping $f_\gamma\colon\mathbb S^1\longrightarrow \mathbb S^1$.

\begin{defn}\label{index}
The index of $\gamma$ is defined as $\deg(\,f_\gamma\,)$ and denoted by $\textnormal{Ind}(\,\gamma\,)$. 
\end{defn}

\smallskip

\noindent The index $\textnormal{Ind}(\,\gamma\,)$ of a closed curve $\gamma$ measures the number of times the unit tangent vector field spins with respect to the direction given by the horizontal foliation. Since for translation surfaces the parallel transport yields a trivial homomorphism, see Remark \ref{partrans}, the total change of the angle is $2\pi\,\textnormal{Ind}(\gamma)$.

\smallskip

\textit{Convention.} We use the convention that $\mathbb S^1$ is counterclockwise oriented. As a consequence, the index $\textnormal{Ind}(\,\gamma\,)=\deg(\,f_\gamma\,)$ is \textit{positive} if  $\gamma$ is counterclockwise oriented.

\medskip

\noindent Following the earlier work of Johnson \cite{JO} (see also \cite{AM} and \cite{MD}), we can define the \textit{spin structure} as the parity of 
\begin{equation}\label{eq:spinparity}
    \varphi(\,\omega\,)=\sum_{i=1}^g \big(\,\text{Ind}(\alpha_i)+1\big)\big(\,\text{Ind}(\beta_i)+1\big) \,\,(\text{mod}\,2).
\end{equation}

\noindent determined by an abelian differential $\omega$, where $\{\alpha_1,\beta_1,\dots,\alpha_g,\beta_g\}$ is a symplectic basis of $\textnormal{H}_1(X,\, \mathbb Z)$. Notice that the spin parity is only well-defined when all the singularities have even orders, because \(\gamma\) passing through a singularity of order \(m\) changes its index by \(m\). Thus the spin structure is a topological invariant well-defined for translation surfaces \((X,\omega)\) of \textit{even type}, meaning that all zeros of \(\omega\) have even order. For more details, see \cite{CF} and \cite{KZ}. It is straightforward to check that $\varphi(\,\omega\,)$ does not depend on the choice of the symplectic basis. Finally, it follows from the aforementioned references that the spin parity is invariant under continuous deformations.

\medskip

\section{Surgeries on translation surfaces}\label{sec:surgeries} 

\noindent In this section, we recall a few surgeries we will use in the sequel. These surgeries all consist of cutting a given translation surface along one, or possibly more, geodesic segment(s) and gluing them back along those segments in order to get new translation structures. Different ways of gluing will provide different translation structures. In order to introduce these operations, we need to make use of the following terminology already adopted in \cite{CF}.

\smallskip

\subsection{Convention and terminology}\label{sssec:conv} Recall that slitting a surface along an oriented geodesic segment $s$ is a topological surgery for which the interior of $s$ is replaced with two copies of itself. On the resulting surface, these two segments, form a piecewise geodesic boundary with two corner points, corresponding to the extremal points of $s$, each of which can be a regular point or a branch point of angle $2(m+1)\pi$ for some integer $m\ge1$. We denote by $s^+$ the piece of boundary which bounds the surface on its right with respect to the orientation induced by $s$. In a similar fashion, we denote by $s^-$ the piece of boundary which bounds the surface on its left with respect to the orientation induced by $s$. See \textit{e.g.}~Figure~\ref{slit} for the $\pm$ labeling convention. 

\begin{rmk}\label{rmk:conv}
    In the following we need to extend this convention to sides of polygons. More precisely, for a polygon $\mathcal P$ (possibly non-compact), we denote sides that are identified via a translation with the same label up to the sign. Once the orientation is fixed, we will use the sign $+$ for the side that leaves the polygon on its right and the sign $-$ for the other side, namely the one that leaves the polygon on its left.
\end{rmk}

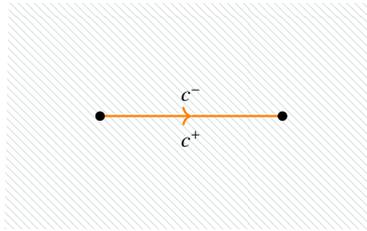
\begin{figure}[ht!]
    \centering
    \begin{tikzpicture}[scale=1.2, every node/.style={scale=0.8}]
    \definecolor{pallido}{RGB}{221,227,227}
    
    \pattern [pattern=north west lines, pattern color=pallido]
    (-4.5,-0.25)--(-0.5,-0.25)--(-0.5,2.25)--(-4.5,2.25)--(-4.55,-0.25);
    \draw [thick, orange, ->] (-3.5,1) to (-2.5,1);
    \draw [thick, orange] (-2.5,1) to (-1.5,1);
    \fill (-3.5,1) circle (1.5pt);
    \fill (-1.5,1) circle (1.5pt);

    \node at (-2.5, 1.25) {$c^-$};
    \node at (-2.5, 0.75) {$c^+$};
   
    \end{tikzpicture}
    \caption{Labels of a slit}
    \label{slit}
\end{figure}

\noindent Sometimes we will omit the arrows as the direction is implicitly understood by the signs. In the following constructions, we need to slit and glue surfaces along geodesic segments. In order for this operation to be done, we need to glue along segments which are parallel after developing, \textit{e.g.} see \S\ref{sssec:hdhyp}. For any $c\in\C^*$, by \textit{slitting $(\C,\,dz)$ along $c$, we mean a cut along any geodesic segment $s$ of length $|\,c\,|$ and direction equal to $\arg(c)$, that is 
\begin{equation}
    \int_s dz=c.
\end{equation}} 

\medskip

\subsection{Breaking up a zero}\label{sec:zerobreak} The surgery we are going to describe has been introduced by Eskin--Masur--Zorich in \cite[Section 8.1]{EMZ} and it literally "breaks up" a zero in two, or possibly more, zeros of lower orders. Complex-analytically, this can be thought as the analogue to the classical Schiffer variations for Riemann surfaces, see \S\ref{sssec:schiffer} and \cite{NS} for more details. 
This surgery only modifies a translation surface on a contractible neighborhood of the initial zero. In particular, after the surgery the resulting surface has the same genus as the former one, but the type of zero orders is changed. Moreover, the new translation surface we obtain after the surgery has the same (absolute) period character as the original one. As a consequence, this operation produces small deformations of the original translation structure in the same isoperiodic fiber.

\begin{rmk} In the context of branched projective structures, such a surgery is also known as the \textit{movement of branched points} and it has been originally introduced by Tan in \cite[Chapter 6]{TA} for showing the existence of a complex one-dimensional continuous family of infinitesimal deformations of a given structure.\end{rmk}

\noindent Let us now explain this surgery in more detail. Let $(X,\omega)$ be a translation surface possibly with poles. Breaking up a zero is a procedure that takes place at the $\varepsilon$-neighborhood of some zero of order $m$ of the differential on which it looks like the pull-back of the form $dz$ via a branched covering $z\longmapsto z^{\,a+1}$. The differential is then modified by a surgery inside this $\varepsilon$-neighborhood. Once this surgery is performed we obtain a new translation structure with two zeros of order $a_1$ and $a_2$ such that $a_1+a_2 = a$. Furthermore, the translation structure remains unchanged outside the $\varepsilon$-neighborhood of such a zero of order $a$. The idea is to consider the $\varepsilon$-neighborhood of a zero of order $a$ as $a+1$ copies of a disc $D$ of radius $\varepsilon$ whose diameters are identified in a specified way. We can see this family of discs as a collection of $a+1$ upper half-discs and $a+1$ lower half-discs. See figure \ref{fig:zerolocmodel}.

\smallskip

\begin{figure}[ht]
    \centering
    \begin{tikzpicture}[scale=0.7, every node/.style={scale=0.6}]
    \definecolor{pallido}{RGB}{221,227,227}
    \foreach \x [evaluate=\x as \coord using 4 + 5*\x] in {0, 1, 2} 
    {
    \draw [pattern=north west lines, pattern color=pallido] (\coord,0) arc [start angle = 0,end angle = 180,radius = 2];
    \draw [pattern=north west lines, pattern color=pallido] (\coord,-1) arc [start angle = 0,end angle = -180,radius = 2];
    }
    \foreach \x [evaluate=\x as \leftend using 5*\x] [evaluate=\x as \rightend using 2 + 5*\x] in {0,1,2} 
    {
    \draw [thick] (\leftend, 0) -- (\rightend, 0);
    \draw [thick] (\leftend, -1) -- (\rightend, -1);
    }
    \foreach \x [evaluate=\x as \leftlabel using 1 + 5*\x] [evaluate=\x as \rightlabel using 3 + 5*\x] in {0, 1, 2} 
    {
    \node [below] at (\leftlabel, 0) {$u_{r\x}$};
    \node [below] at (\rightlabel, 0) {$u_{l\x}$};
    \node [above] at (\leftlabel, -1) {$l_{l\x}$};
    \node [above] at (\rightlabel, -1) {$l_{r\x}$};
    }
    
\foreach \botindex / \topindex / \colr [evaluate=\botindex as \botleftend using 5*\botindex + 2] [evaluate=\botindex as \botrightend using 5*\botindex + 4] [evaluate=\topindex as \topleftend using 5*\topindex + 4] [evaluate=\topindex as \toprightend using 5*\topindex + 2] in {0/1/blue, 1/2/red, 2/0/purple} 
    {
    \draw [thick, \colr] (\topleftend, 0) -- (\toprightend, 0);
    \draw [thick, \colr] (\botleftend, -1) -- (\botrightend, -1);
    \fill (\toprightend, 0) circle (1.5pt);
    \fill (\botleftend, -1) circle (1.5pt);
    }
 
    \end{tikzpicture}
    \caption{An $\varepsilon$-neighborhood of a zero of order $2$.}
    \label{fig:zerolocmodel}
\end{figure}
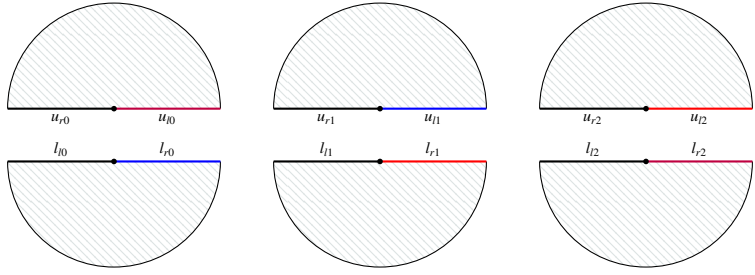

\noindent  We now break up a zero of order $a$ into two zeros of order $a_1$ and $a_2$. This surgery consists of identifying the diameters of the starting $a+1$ discs in a different way as follows. First, we modify the labeling on the upper half disc indexed by $0$, the lower half disc indexed by $a_1$, and all upper and lower half discs with index more than $a_1$ accordingly. The modified labeling is shown below in Figure \ref{fig:splitlocmodel} for the case of splitting a zero of order $2$ into two zeros of order $1$.

\smallskip

\begin{figure}[ht]
    \centering
    \begin{tikzpicture}[scale=0.7, every node/.style={scale=0.6}]
    \definecolor{pallido}{RGB}{221,227,227}
    \foreach \x [evaluate=\x as \coord using 4 + 5*\x] in {0, 1, 2} 
    {
    \draw [pattern=north west lines, pattern color=pallido] (\coord,0) arc [start angle = 0,end angle = 180,radius = 2];
    \draw [pattern=north west lines, pattern color=pallido] (\coord,-1) arc [start angle = 0,end angle = -180,radius = 2];
    }
    \foreach \x [evaluate=\x as \leftend using 5*\x] [evaluate=\x as \rightend using 2 + 5*\x] in {0, 1, 2} 
    {
    \draw [thick] (\leftend, 0) -- (\rightend, 0);
    \draw [thick] (\leftend, -1) -- (\rightend, -1);
    \node[above] at (\rightend, 0) {$P$};
    \node[below] at (\rightend, -1) {$P$};
    }
    \draw [thick, orange] (2,0) -- (2.8,0);
    \fill (2,0) circle (1.5pt);
    \draw [thick, orange] (12,-1) -- (12.8,-1);
    \fill (12,-1) circle (1.5pt);
    \node [below] at (2.4, 0) {$um$};
    \node [above] at (12.4, -1) {$lm$};
    \foreach \botindex / \topindex / \colr [evaluate=\botindex as \botleftend using 5*\botindex + 2] [evaluate=\botindex as \botrightend using 5*\botindex + 4] [evaluate=\topindex as \topleftend using 5*\topindex + 2] [evaluate=\topindex as \toprightend using 5*\topindex + 4] [evaluate=\topindex as \topsplpt using 5*\topindex + 2.8] [evaluate=\botindex as \botsplpt using 5*\botindex + 2.8] in {0/1/blue, 1/2/red} 
    {
    \draw [thick, \colr] (\topleftend, 0) -- (\toprightend, 0);
    \draw [thick, \colr] (\botleftend, -1) -- (\botrightend, -1);
    \fill (\topleftend, 0) circle (1.5pt);
    \fill (\botleftend, -1) circle (1.5pt);
    \fill (\topsplpt, 0) circle (1.5pt);
    \fill (\botsplpt, -1) circle (1.5pt);
    \node[above] at (\topsplpt, 0) {$Q$};
    \node[below] at (\botsplpt, -1) {$Q$}; 
    }
    \foreach \botindex / \topindex / \colr [evaluate=\botindex as \botleftend using 5*\botindex + 2.8] [evaluate=\botindex as \botrightend using 5*\botindex + 4] [evaluate=\topindex as \topleftend using 5*\topindex + 2.8] [evaluate=\topindex as \toprightend using 5*\topindex + 4] in {2/0/violet} 
    {
    \draw [thick, \colr] (\topleftend, 0) -- (\toprightend, 0);
    \draw [thick, \colr] (\botleftend, -1) -- (\botrightend, -1);
    \fill (\topleftend, 0) circle (1.5pt);
    \fill (\botleftend, -1) circle (1.5pt);
    \node[above] at (\topleftend, 0) {$Q$};
    \node[below] at (\botleftend, -1) {$Q$};
    }
    \foreach \x [evaluate=\x as \leftlabel using 1 + 5*\x] in {0, 1, 2} 
    {
    \node [below] at (\leftlabel, 0) {$ul_{\x}$};
    \node [above] at (\leftlabel, -1) {$ll_{\x}$};
    }
    \foreach \botindex / \topindex [evaluate=\botindex as \botlabel using 3+ 5*\botindex] [evaluate=\topindex as \toplabel using 3+ 5*\topindex] in {0/1, 1/2} 
    {
    \node [below] at (\toplabel, 0) {$ur_{\topindex}$};
    \node [above] at (\botlabel, -1) {$lr_{\botindex}$};
    }
    \foreach \botindex / \topindex [evaluate=\botindex as \botlabel using 3.4+ 5*\botindex] [evaluate=\topindex as \toplabel using 3.4+ 5*\topindex] in {2/0} 
    {
    \node [below] at (\toplabel, 0) {$ur_{\topindex}$};
    \node [above] at (\botlabel, -1) {$lr_{\botindex}$};
    }
    \end{tikzpicture}
    \caption{New labeling for breaking up a zero of order $2$ in two zeros of order $1$.}
    \label{fig:splitlocmodel}
\end{figure}
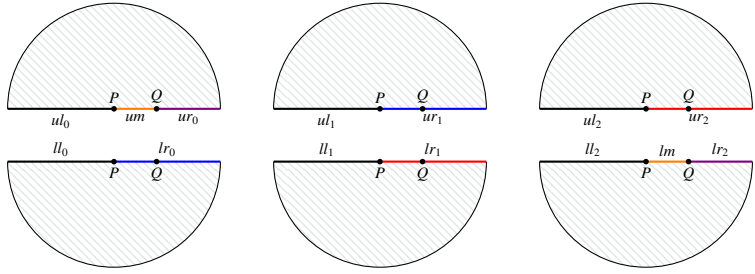

\noindent Next, we identify $ul_i$ with $ll_i$ and $lr_i$ with $ur_{i+1}$ as before with the added identification of $um$ with $lm$. This identification gives two zeros $A$ and $B$, where $A$ is a zero of the differential of order $m_1$ and $B$ is a zero of order $m_2$. We also get a geodesic line segment joining $A$ and $B$. Given $c \in \mathbb{C}\setminus\{0\}$ with length less than $2\varepsilon$, we can perform the surgery in such a way that the line segment joining $A$ and $B$ is $c$. It is also clear that such a deformation of the translation structure is only local. This procedure can be repeated multiple times to obtain zeros of orders $a_1, \dots, a_k$ from a single zero of order $a_1 + \cdots + a_k$. We will frequently rely on this procedure of breaking up a zero in the remaining part of this paper. 

\smallskip

\subsection{Schiffer variations and movement of zeros}\label{sssec:schiffer} As already alluded above, Schiffer variations are intimately related to the surgery just described in Paragraph \S\ref{sec:zerobreak}. Geometrically, Schiffer variations can be seen as a \textit{movement of zeros} along a given direction. 

\begin{rmk}
    By adopting the language of geometric structures on surfaces, the movement of zeros is largely known as the \textit{movement of branch points}, see \cite{CDF}, \cite{CDF2}, and \cite{TA} where, in that case, a \textit{branch point} is understood as a point around which local charts are branch covers onto some open subsets of the model space. In this language, translation surfaces are branched geometric structures, and the zeros thus correspond to the branch points. In the present paper, however, we do not adopt this terminology.
\end{rmk}

\noindent Before stating the movement of zeros, let us introduce the following terminology as in \cite[Section \S8]{CF}. We remark that this is a different but equivalent formulation compared to the other currently known in literature, see \cite[Section \S2]{CDF}.

\begin{defn}[Twin paths]\label{def:twins}
On a translation surface $(X,\omega)$, let $P$ be a branch point of order $m$. Consider a collection of $m+1$ embedded path
$c_i\colon [\,0,\,1\,]\longrightarrow (X,\omega)$ such that $c_i(\,0\,)=P$ for $i=0,\dots,m$, each of which is injectively developed, and all of which overlap once developed, \textit{i.e.} there is a determination of the developing map around $c_0\,\cup\,\cdots\,\cup\,c_m$ such that
\begin{itemize}
    \item $c_0,\,c_1,\dots,c_m$ injectively develop to the same arc $\widehat{c}\subset \C$, and 
    \item $\text{dev}\big(\,c_i(\,t\,)\,\big)=\text{dev}\big(\,c_j(\,t\,)\,\big)$ for every $t\in[0,1]$ and $i,j\in\{0,\dots,m\}$.
\end{itemize}
\noindent For any $2\le k\le m$, the paths $c_{i_1},\dots,c_{i_k}$ are called \textit{twin paths}. For any pair $c_i,\,c_j$, notice that the angle at $P$ between them is a multiple of $2\pi$.
\end{defn}

\textit{Convention.} In what follows we assume that all twin paths $c_0,\dots,c_m$ at $P$ as defined in the above are counterclockwise ordered if not otherwise specified.

\smallskip

\begin{defn}[Movement of zeros]\label{def:move} Let $(X,\omega)$ be a translation surface and let $P$ be a zero of order $m\ge1$. Let $c_o,\dots,c_m$ be a family of embedded paths at $P$ such that for any $i,j$, the pair $c_i,\,c_j$ turns out to be a couple of embedded twin paths and $c_i(1)$ is a regular point for any $i=0,\dots,m$. Cut $(X,\omega)$ along $c_o\cup\dots\cup c_m$ and gluing back $c_i^+$ with $c_{i+1}^-$ for $i=0,\dots,m-1$ and also $c_o^-$ with $c_m^+$. The resulting structure is a local deformation of $(X,\omega)$ obtained by a \textit{movement of zeros}. In what follows, whenever it is necessary, we will denote
by ${\rm Move}\big(\,(X,\omega),\,{\textbf{c}}\,\big)$ such a deformation of $(X,\omega)$, where ${\textbf{c}}$ denotes a collection $\left\{\,c_o,\dots,c_m\,\right\}$ of twin paths.
\end{defn}
 
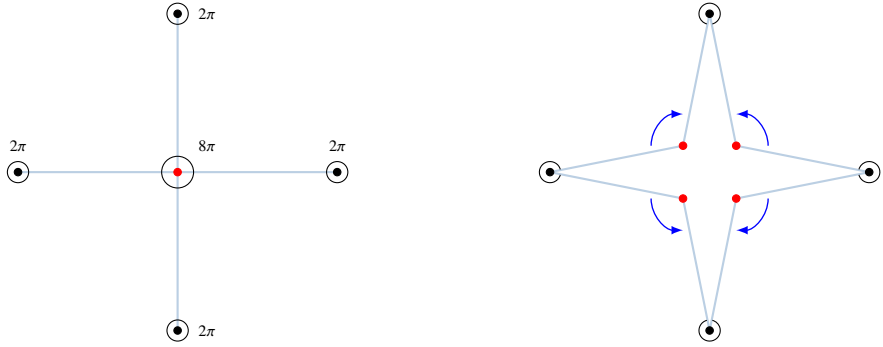
\begin{figure}[!h]
\centering
\begin{tikzpicture}[scale=0.7, every node/.style={scale=0.6}]

\definecolor{pallido}{RGB}{187,207,227}

\draw [thick, pallido] (0,5) to (3,5);
\draw [thick, pallido] (3,5) to (6,5);
\draw [thick, pallido] (3,5) to (3,8);
\draw [thick, pallido] (3,5) to (3,2);

\draw [thick, pallido] (12.5,5.5) to (13,8);
\draw [thick, pallido] (12.5,4.5) to (13,2);
\draw [thick, pallido] (13.5,5.5) to (13,8);
\draw [thick, pallido] (13.5,4.5) to (13,2);

\draw [thick, pallido] (10,5) to (12.5,5.5);
\draw [thick, pallido] (10,5) to (12.5,4.5);
\draw [thick, pallido] (16,5) to (13.5,5.5);
\draw [thick, pallido] (16,5) to (13.5,4.5);

 \draw (16,5) ++ (-165:0.2) arc (-165:165:0.2);
 \draw (10,5) ++ (15:0.2)  arc (15:345:0.2);
 \draw (13,8) ++ (-75:0.2)  arc (-75:255:0.2);
 \draw (13,2) ++ (-255:0.2)  arc (-255:75:0.2);

\draw [red]    plot [mark=*, smooth] coordinates {(3,5)};
\draw [black]  plot [mark=*, smooth] coordinates {(0,5)};
\draw [black]  plot [mark=*, smooth] coordinates {(6,5)};
\draw [black]  plot [mark=*, smooth] coordinates {(3,8)};
\draw [black]  plot [mark=*, smooth] coordinates {(3,2)};

\draw [thin] (3,5) circle (3mm);
\draw [thin] (0,5) circle (2mm);
\draw [thin] (6,5) circle (2mm);
\draw [thin] (3,8) circle (2mm);
\draw [thin] (3,2) circle (2mm);

\draw [red] plot [mark=*, smooth] coordinates {(12.5,5.5)};
\draw [red] plot [mark=*, smooth] coordinates {(12.5,4.5)};
\draw [red] plot [mark=*, smooth] coordinates {(13.5,5.5)};
\draw [red] plot [mark=*, smooth] coordinates {(13.5,4.5)};

\draw [black] plot [mark=*, smooth] coordinates {(10,5)};
\draw [black] plot [mark=*, smooth] coordinates {(16,5)};
\draw [black] plot [mark=*, smooth] coordinates {(13,8)};
\draw [black] plot [mark=*, smooth] coordinates {(13,2)};

\node at (3.55,5.5) {$8\pi$};
\node at (3.55,8) {$2\pi$};
\node at (3.55,2) {$2\pi$};
\node at (6,5.5) {$2\pi$};
\node at (0,5.5) {$2\pi$};

\draw [latex- ,line width=0.5pt, blue] (12.5, 5.5) ++(90:6mm) arc (90:180:6mm);
\draw [-latex ,line width=0.5pt, blue] (12.5, 4.5) ++(180:6mm) arc (180:270:6mm);
\draw [latex- ,line width=0.5pt, blue] (13.5, 5.5) ++(90:6mm) arc (90:0:6mm);
\draw [-latex ,line width=0.5pt, blue] (13.5, 4.5) ++(0:6mm) arc (0:-90:6mm);
\end{tikzpicture}
\caption[]{Movement of a zero of order $3$.}\label{sm}
\end{figure}

\noindent In the study of branched structures on surfaces, the importance of such a surgery is its key property of preserving the holonomy of the structure. Moreover, it does not change the structure of the branching divisor. In our language, the holonomy being preserved means that the absolute periods remain untouched once the surgery is performed. In particular, the resulting structure lies in the same stratum.

\medskip

\indent \textit{Geometric interpretation of "movement".} We would like to provide an alternative description of the movement of zeros by using a more geometric and dynamical interpretation of the "movement". 

\smallskip

\noindent For this purpose, let $(X,\omega)$ be any translation surface, possibly with poles. Let $P$ be a zero of $\omega$ of order $m\ge1$ and let $B=B_\varepsilon(P)$ be an open metric ball embedded in $(X,\omega)$. For $\varepsilon$ small enough, $B$ homeomorphically lifts to an open ball in $(\widetilde X, \widetilde\omega)$ and we can assume that the restriction of the developing map is a local branched chart $\varphi\colon B\longrightarrow \mathbb C$ of degree $m+1$,  branched at $P$. Let $\widehat P=\varphi(\,P\,)$ and let $\widehat B=\varphi(\,B\,)$. Let $\textbf{c}=\left\{\,c_o,\dots,c_m\,\right\}$ be a collection of twin paths based at $P$ and let $Q_i$ denote the extreme point of $c_i$ other than $P$. Assume that $\textbf{c}$ is entirely contained in $B$. Since all paths in \textbf{c} are assumed to be twins, they all develop onto the same segment, say $\widehat c\subset \mathbb C$, having $\widehat P$ and $\widehat Q$ as the extremal points. Notice that $\varphi(\,Q_i\,)=\widehat Q$ for every $i=0,\dots,m$. In this setting, since $\varphi$ is a local branched cover, then $\widehat P$ is a branch value and, by assuming all $Q_i$ to be regular points (hence they are not zeros nor poles for $\omega)$, $\widehat Q$ is a regular value.

\smallskip

\noindent We then perform the surgery described in Definition \ref{def:move}. After the cut and paste process, $(X,\omega)$ is thus deformed to a new structure ${\rm Move}\big(\,(X,\omega),\,{\textbf{c}}\,\big)$. In this latter, the local chart $\varphi$, seen as a branched cover, has been deformed to a new local branched chart $\psi\colon B\longrightarrow \widehat B\subset \mathbb C$ of degree $m+1$ in which $\widehat P$ is a regular value whereas $\widehat Q$ is a branch value. More precisely, $\psi^{-1}(\,\widehat Q\,)$ has only one preimage, say $Q$, arising from the collapsing of $\{Q_o,\dots,Q_m\}$ and  $\psi^{-1}(\,\widehat P\,)$ has $m+1$ preimages $\{P_o,\dots, P_m\}$. The mapping $\psi$ around every $P_i$ is now a regular chart.

\smallskip

\noindent By keeping in mind this description, we can say that a zero of $\omega$ has been moved to another point and the movement is seen in local charts. For a more dynamical interpretation, we observe that the movement of zeros naturally comes with a $1-$parameter family. In the above notation, for every  $t\in[0,\,1]$ we can consider the structure ${\rm Move}\big(\,(X,\omega),\,{\textbf{c}^t}\,\big)$, where $\textbf{c}^t$ is the collection of embedded twin paths made of those sub-arcs of $c_i$'s parametrized by the sub-interval $[0,\,t]$. For $t$ running in the unit interval $[0,\,1]$, we see $\widehat P$, \textit{i.e.} the developed image of $P$, \textit{moving} along $\widehat c$,  which motivates the name of this surgery.

\begin{rmk}
    Breaking up a zero, seen as the operation just described in Paragraph \S\ref{sec:zerobreak}, is a special case of the movement of zero. In fact, let $\textbf{c}=\{c_1,\dots,c_m\}$ be a collection of twin paths based at $P$ as above. For $0<k<m$, given a sub-collection $\{c_{i_1},\dots,c_{i_k}\}$ of adjacent twin paths, the surgery described in Definition \ref{def:move} literally \textit{breaks} $P$ into two lower-order zeros of order $m-k$ and $k$, and only the latter is moved away from $P$. In the local charts, $\varphi$ seen as a branched cover is deformed to a new branched chart $\psi$ that now has two branch values at $\widehat P$ and $\widehat Q$. In this case, $\psi^{-1}(\,\widehat P\,)$ has $m-k+1$ preimages and $\psi^{-1}(\,\widehat Q\,)$ has $k+1$ preimages.
\end{rmk}

\begin{rmk}
    In our setting above, the extremal points $\{Q_o,\dots,Q_m\}$ of $\textbf{c}$ were all supposed to be regular points. This surgery extends \textit{mutatis mutandis} to the case in which some, possibly all, of the $Q_i$'s are also zeros for $\omega$. In this extended setting, two or more zeros of $\omega$ collapse to a new-born zero of higher order. Under this perspective, the movement of zeros is a surgery that not only extends the breaking of a zero but also provide its reverse surgery.
\end{rmk}

\smallskip

\subsection{Handles} Finally, we introduce the following terminology, which we will extensively use in the sequel.

\begin{defn}[Handles and handle-generators]\label{handle} On a closed surface $S_{g}$ of genus $g\ge2$, a \textit{handle} is an embedded subsurface $\Sigma$ that is homeomorphic to $S_{1,1}$, and a \textit{handle-generator} is a simple closed curve that is one of the generators of $\text{H}_1(\Sigma, \mathbb{Z})\cong\shomolzoo$. A \textit{pair of handle-generators} for a handle consists of a pair of simple closed curves $\{\,\alpha, \beta\,\}$ that generate $\text{H}_1(\Sigma, \mathbb{Z})$; in particular, $\alpha$ and $\beta$ intersect once. \end{defn} 

\begin{defn}[Systems of handle generators]\label{def:syshandle}
On a closed surface $S_{g}$ of some positive genus $g\ge2$, we consider a collection of pairwise disjoint $g$ handles $\Sigma_1,\dots,\Sigma_g$. A \textit{system of handle generators} is a collection of $g$ pairs of handle generators $\{\alpha_i,\,\beta_i\}_{1\le i\le g}$ such that $\{\alpha_i,\beta_i\}$ is a pair of handle generators for $\Sigma_i$.
\end{defn}

\noindent We now describe a second surgery introduced by Kontsevich and Zorich in \cite{KZ}. Topologically, \textit{bubbling a handle} consists of adding a handle, say $\Sigma$ to a given surface. Let us explain how this can be done metrically. Let $(X,\omega)\in\mathcal{H}_g(\,\mu\,)$ be a translation surface and let $l\subset (X,\omega)$ be a geodesic segment with distinct endpoints. 

\begin{figure}[!ht] \label{fig:bubbhand}
\centering
\begin{tikzpicture}[scale=1, every node/.style={scale=0.75}]
\definecolor{pallido}{RGB}{221,227,227}
\draw[black, pattern=north west lines, pattern color=pallido] (-3,1) -- (-5, -1) -- (-9,-1) -- (-7, 1) -- (-3,1);
\node[above left] at (-8,0) {$l^-$};
\node[below right] at (-4,0) {$l^+$};
\draw[dashed, black, pattern=north west lines, pattern color=pallido] (0,0) circle (2);
\draw (-1,-1) .. controls (0.1, -0.1) .. (1,1);
\draw (-1,-1) .. controls (-0.1, 0.1) .. (1,1);
\node[above left] at (-0.1, 0.1) {$l^-$};
\node[below right] at (0.1, -0.1) {$l^+$}; 
\fill [white, thin, draw=black] (-1,-1) .. controls (0.1, -0.1) .. (1,1) .. controls (-0.1, 0.1) .. (-1,-1);
\end{tikzpicture}\bigskip
\caption{Bubbling a handle with positive volume.}
\end{figure}
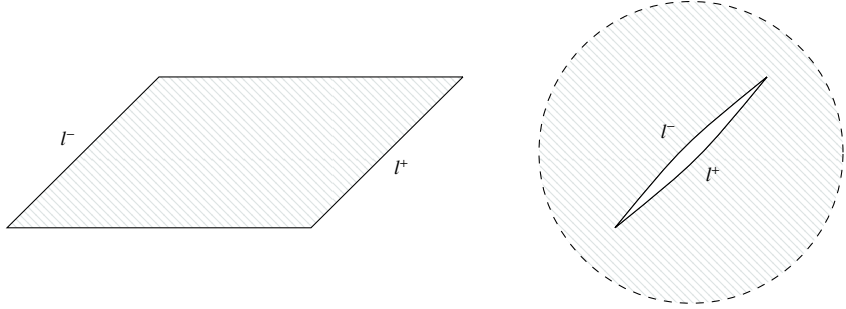

\noindent We slit the translation surface $(X,\omega)$ along $l$ and we label the side that has the surface on its left as $l^+$ and the other side as $l^-$. We then paste the extremal points of the geodesic segment and the resulting surface has two geodesic boundary components of the same length (by construction). Let $\mathcal{P}\subset \mathbb C$ be a parallelogram such that the two opposite sides are both parallel to $l$ (via the developing map) -- the handle $\Sigma$ is obtained by gluing the opposite sides of $\mathcal{P}$. We can paste such a parallelogram to the slit $(X,\omega)$ and the resulting topological surface is homeomorphic to $S_{g+1,n}$. Metrically, we have a new translation surface $(Y,\xi)$. We can distinguish three mutually disjoint possibilities: Let $\mu=(m_1,\dots,m_k)$. 
\begin{itemize}
    \item[1.] Both of the extremal points of $l\subset (X,\omega)$ are regular. Then $(Y,\xi)\in\mathcal{H}_g(2,m_1,\dots,m_k)$.
    \item[2.] One of the extremal points of $l\subset (X,\omega)$ is a zero of $\omega$ of order $m_i$, where $1\le i\le k$. Then $(Y,\xi)$ belongs to the stratum $\mathcal{H}_g(m_1,\dots,m_i+2,\dots,m_k)$.
    \item[3.] Both of the extremal points are zeros of $\omega$ of orders $m_i$ and $m_j$,  respectively. Then $(Y,\xi)$ belongs to the stratum $\mathcal{H}_g(m_1,\dots,\widehat{m}_i,\dots,\widehat{m}_j,\dots,m_i+m_j+2,\dots,m_k)$.
\end{itemize}

\noindent The following holds.

\begin{lem}\label{lem:spininv}
Let $(X,\omega)$ be a translation surface obtained from $(Y,\xi)\in\mathcal{H}_{g}\big(\,2m_1,\dots,2m_k\,\big)$ by bubbling a handle along a slit. Let $2\pi(\,\ell+1\,)$ be the angle around one of the extremal points of the slit, where $\ell=2m_i\ge2$ for some $i=1,\dots,k$ or $\ell=0$. Then, the spin structures determined by $\omega$ and $\xi$ are related as follows: 
\begin{equation}
    \varphi(\omega)-\varphi(\xi)=\ell \,\,\,(\textnormal{ mod }2\, ).
\end{equation}
In particular, bubbling a handle does not alter the spin parity.
\end{lem}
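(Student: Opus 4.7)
The plan is to compute $\varphi(\omega)$ from the defining formula \eqref{eq:spinparity} applied to a symplectic basis of $H_1(X,\mathbb Z)$ that extends a chosen symplectic basis of $H_1(Y,\mathbb Z)$ by two loops carried by the bubbled handle.

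First I would choose a symplectic basis $\{\alpha_i,\beta_i\}_{1\le i\le g}$ of $H_1(Y,\mathbb Z)$ whose representatives are disjoint from the slit $l$ and from every zero of $\xi$; this is possible because $l$ is a contractible embedded arc and the zeros are isolated. After the bubbling, these loops lie in $X\setminus\mathcal{P}$, where the translation structure (and in particular the horizontal foliation) of $(X,\omega)$ restricts to that of $(Y,\xi)$. Hence their indices and pairwise intersection numbers are preserved and their total contribution to \eqref{eq:spinparity} equals $\varphi(\xi)\pmod 2$.

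Next I would construct two new handle generators. Take $\gamma$ to be the simple closed geodesic inside $\mathcal{P}$ running parallel to $l$: its tangent is constant, so by triviality of parallel transport $\textnormal{Ind}(\gamma)=0$ and $q(\gamma)\equiv 1\pmod 2$. Fixing one extremal point $P$ of $l$, whose cone angle in $Y$ is $2\pi(\ell+1)$, build $\delta$ as the concatenation of (i) a straight geodesic through $\mathcal{P}$ from a point $p^+\in a_1\sim l^+$ to the corresponding point $p^-\in a_2\sim l^-$, and (ii) a return path $\delta_0\subset\widetilde Y$ made of a segment along $l^-$ from $p^-$ to within distance $\varepsilon$ of $P$, an arc of radius $\varepsilon$ around $P$ traversed through the interior of $\widetilde Y$, and a segment along $l^+$ back to $p^+$. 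By construction $\gamma$ and $\delta$ meet transversally once inside $\mathcal{P}$ and are disjoint from the $\alpha_i,\beta_i$, so $\{\alpha_i,\beta_i,\gamma,\delta\}$ is a symplectic basis of $H_1(X,\mathbb Z)$.

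The core computation is $\textnormal{Ind}(\delta)$. The four straight pieces contribute no smooth turning; the arc around $P$ sweeps exactly the interior angle of $\widetilde Y$ at $P$, which equals the cone angle $2\pi(\ell+1)$, in the only admissible direction (the one not crossing the slit), contributing $-2\pi(\ell+1)$ to the total turning. Writing $\beta=\arg B$ for the direction $B$ of the parallelogram sides transverse to $l$, a short case check gives corner exterior angles $-\beta$, $+\pi/2$, $+\pi/2$, $\beta-\pi$ at the four corners of $\delta$, which cancel. Therefore the total turning of $\delta$ is $-2\pi(\ell+1)$, so $\textnormal{Ind}(\delta)=-(\ell+1)$ and $q(\delta)\equiv -\ell\equiv \ell\pmod 2$. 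Substituting in \eqref{eq:spinparity} yields
$$
\varphi(\omega)-\varphi(\xi)\,=\,q(\gamma)\,q(\delta)\,\equiv\,\ell\pmod 2,
$$
which, since $\ell\in\{0,2m_i\}$ is even, vanishes modulo $2$, proving the lemma.

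The delicate point is to identify the correct orientation of the straight segment of $\delta$ through $\mathcal{P}$ compatibly with the orientation-preserving gluings of $a_1,a_2$ to $l^+,l^-$ and then to verify that the four corner exterior angles cancel regardless of the parallelogram's shape; a sign error at a single corner flips the parity of $\textnormal{Ind}(\delta)$ and spoils the final identity.
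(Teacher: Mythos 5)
Your proof is correct, and it takes a genuinely different route from the paper's. The paper disposes of the lemma in one line by invoking Kontsevich--Zorich's Lemma~11, together with Remark~\ref{rmk:spininv} to account for the slight difference in how bubbling is defined (along an arbitrary geodesic slit here, rather than along a saddle connection produced by a preliminary zero-breaking). You instead give a self-contained index computation: extend a symplectic basis of $\mathrm{H}_1(Y,\mathbb Z)$ supported away from the slit by a pair of handle generators $\{\gamma,\delta\}$ carried by the bubbled handle, observe that the old curves keep their $\xi$-indices so their contribution to \eqref{eq:spinparity} is $\varphi(\xi)$, and compute $\mathrm{Ind}(\gamma)=0$ and $\mathrm{Ind}(\delta)=-(\ell+1)$ via the developing map. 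This is more work but also more informative: it exposes exactly \emph{why} the parity is preserved, namely because $\mathrm{Ind}(\delta)+1=-\ell$ is always even when $\ell\in\{0,2m_i\}$, whereas the paper's citation requires the reader to unravel the translation between the two bubbling conventions.

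Two small remarks on the ``delicate point'' you flag. First, the sign of the smooth turning along the arc around $P$ (whether it contributes $+2\pi(\ell+1)$ or $-2\pi(\ell+1)$, which depends on the $l^\pm$ labelling convention and on which side of $\mathcal P$ is glued to $l^+$) does \emph{not} affect the conclusion: the two choices change $\mathrm{Ind}(\delta)$ by $2(\ell+1)$, which is even, so $\mathrm{Ind}(\delta)\equiv\ell+1\pmod 2$ either way. Second, the corner contribution is forced to be an integer multiple of $2\pi$ a priori, because the total turning of a closed curve on a translation surface equals $2\pi\,\mathrm{Ind}(\delta)$ and the smooth part is already $\mp2\pi(\ell+1)$; your explicit cancellation $-\beta+\tfrac{\pi}{2}+\tfrac{\pi}{2}+(\beta-\pi)=0$ shows the corner sum is in fact exactly $0$, which is consistent but worth recording so the reader sees no $2\pi$ has been overlooked. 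Finally, you should also note (as the lemma implicitly requires) that both extremal points of the slit have even order in $\xi$, so that the combined zero of $\omega$ has even order $\ell+\ell'+2$ and $\varphi(\omega)$ is well defined; your argument needs this to apply \eqref{eq:spinparity} to $\omega$ at all.
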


\begin{rmk}\label{rmk:spininv}
It is worth noticing that Lemma~\ref{lem:spininv} differs from \cite[Lemma 11]{KZ}, because the definition of bubbling in our setting is slightly different. In the present paper, a bubbling is performed along a geodesic segment that joins two points, possibly regular. In \cite{KZ}, however, the bubbling of a handle is performed along a saddle connection that joins two zeros obtained after breaking a zero. This preliminary operation is the reason of a possible alteration of the spin parity because, by breaking up a zero, the resulting structure may no longer be of even type. More precisely, if we break up a zero of $\omega$ on a structure of even type, the resulting zeros both have even order or odd order. In the former case, bubbling a handle does not alter the parity. However, in the latter case it does, and \cite[Lemma 11]{KZ} takes into account this possibility. Here we do not break any zero for bubbling a handle, and hence the spin parity remains unaltered in our case.
\end{rmk}

\begin{proof}[Proof of Lemma \ref{lem:spininv}]
As explained in Remark \ref{rmk:spininv}, the desired result follows from \cite[Lemma 11]{KZ}.
\end{proof}

\medskip

\section{Dense and semi-dense representations}\label{sec:notdis}

\noindent In the present section, we aim to prove Theorem \ref{thm:mainthm} for non-discrete representations which are the simplest cases to deal with. We first recall the following terminology for absolute period representations.

\begin{defn}
    A representation $\chi_a\colon {\rm H}_1(S_{g}, \mathbb{Z})\longrightarrow \mathbb C$ is said to be 
    \begin{itemize}
        \item \textit{discrete} if the image of $\chi_a$ is a discrete subgroup of $\mathbb C$. Furthermore, we say that $\chi_a$ is discrete of rank one or \textit{integral} if, up to replacing $\chi_a$ with $A\,\chi_a$ where $A\in\glplus$, then $\textnormal{Im}(\chi_a)=\mathbb Z$. We say that $\chi_a$ is discrete of rank two if, up to replacing $\chi_a$ with $A\,\chi_a$ where $A\in\glplus$, then $\textnormal{Im}(\chi_a)=\mathbb Z\,\oplus\,\mathbb Z\rm i$.
        \item \textit{semi-dense} if, up to replacing $\chi_a$ with $A\,\chi_a$ where $A\in\glplus$, then $\textnormal{Im}(\chi_a)=\mathbb Z\,\oplus\, \rm U\rm{i}$, where $\rm U$ is dense in $\mathbb R$.
        \item \textit{dense} if the image of $\chi_a$ is dense in $\mathbb C$.
    \end{itemize}
\end{defn}

\smallskip

\noindent Note that for the dense and semi-dense representations, our method below works for 
both the holomorphic and the meromorphic cases.

\subsection{Dense absolute representations} We begin by recalling that, according to \cite{BJJP}, for a dense absolute representation $\chia$, its realization in any connected component of a given stratum $\mathcal H_g(\,\mu\,)$ is not subject to any restriction. We can then strengthen these results with the following

\begin{prop}
    Let $\chi\colon \textnormal{H}_1(S_{g},\,Z,\,\mathbb Z)\longrightarrow \mathbb{C}$ be a representation and assume its absolute period representation $\chia$ has dense image in $\mathbb C$. Then $\chi$ can be realized in every connected component of the stratum $\mathcal H_g(\,\mu\,)$ for every partition $\mu=(a_o,\dots,a_k)$ of $2g-2$ where $k+1=|\,Z\,|$.
\end{prop}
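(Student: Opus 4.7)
The plan is to reduce the problem to the absolute case already settled by \cite{BJJP} and \cite{fils}, and then to correct the relative periods by the movement-of-zeros surgery of \S\ref{sssec:schiffer}. Since $\chia$ is dense in $\mathbb{C}$, by \cite{BJJP} there exists a translation surface $(X_0,\omega_0)$ realizing $\chia$ in any prescribed connected component of $\hm$. I would label its zeros as $P_0,\ldots,P_k$ in accordance with the marked set $Z$ and the signature $\mu$, so that the resulting relative character $\chi^{(0)}$ of $(X_0,\omega_0)$ has absolute part $\chia$.

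From the dual of the short exact sequence \eqref{eq:exactseq} with $\mathbb{C}$ coefficients, $\chi$ and $\chi^{(0)}$ differ by an element of $\Hom\big(\widetilde{\textnormal{H}}_o(Z,\mathbb{Z}),\mathbb{C}\big)\cong\mathbb{C}^k$. Writing $v_i := \chi(\delta_i)-\chi^{(0)}(\delta_i)$ for $i=1,\ldots,k$, the task reduces to displacing each zero $P_i$ by the vector $v_i$ in the developed plane while keeping $P_0$, all absolute periods, and the connected component of $\hm$ unchanged.

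This is exactly what the surgery of Definition \ref{def:move} does: a movement along twin paths with developed vector $v$ translates the chosen zero by $v$ in the developed plane, preserves the stratum (and hence its connected component, the latter being open in the former), and leaves the full absolute period character intact. For each $i\ge 1$ I would then decompose $v_i = v_i^{(1)} + \cdots + v_i^{(N_i)}$ into steps short enough that each $v_i^{(j)}$ is realized by straight twin paths inside an embedded metric disk around the current position of $P_i$; since around a zero of order $a_i$ there are $a_i+1$ disjoint sectors of angle $2\pi$ each, the required $a_i+1$ twin paths always exist once the intermediate endpoints are chosen at regular points. Composing the resulting elementary surgeries for $i=1,\ldots,k$, with mutually disjoint supports, produces a translation surface in the prescribed component of $\hm$ with relative period character $\chi$.

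The main technical point I anticipate is the bookkeeping needed to ensure that during the iterative process each $P_i$ keeps a uniform positive distance from the remaining zeros, so that the next elementary step can again be realized inside an embedded disk, and that the supports of the successive surgeries at different zeros can be kept pairwise disjoint. Both issues are handled by perturbing the intermediate directions of the decomposition $v_i = v_i^{(1)}+\cdots+v_i^{(N_i)}$ to detour around obstructing zeros whenever necessary, which is always possible since the regular points form a dense open subset of the compact translation surface. I emphasize that the density of $\chia$ is used only once, in the initial invocation of \cite{BJJP}; the subsequent movement-of-zeros argument is purely local and makes no further use of density.
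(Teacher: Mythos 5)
Your approach is correct but takes a genuinely different route from the paper, and is in fact the strategy the paper reserves for the discrete case (\S\ref{ssec:nointrelper}) rather than the dense case. The paper's own proof of this proposition is considerably shorter: it argues by induction on $|Z|$, merging $z_o$ and $z_1$ into a single zero of order $a_o+a_1$, realizing the coarser relative representation by the inductive hypothesis, and then breaking up that zero with the surgery of \S\ref{sec:zerobreak}. The key point is that density is exploited a second time, not just in invoking \cite{BJJP}: since the relative period $\chi(\delta)$ is only prescribed modulo $\mathrm{Im}(\chia)$, density lets one pick a representative of arbitrarily small modulus, so the break-up can be done inside a tiny disk and the newborn zero lands exactly at the right nearby point. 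This avoids any global movement, detouring, or disjoint-support bookkeeping.

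Your movement-of-zeros route works but is heavier than needed. Two remarks. First, the displacement $v_i = \chi(\delta_i)-\chi^{(0)}(\delta_i)$ is only well-defined modulo $\mathrm{Im}(\chia)$ (changing the path $\delta_i$ shifts both terms by the same absolute period, but the marking identifying ${\rm H}_1(X_0,Z(\omega_0),\mathbb Z)$ with ${\rm H}_1(S_g,Z,\mathbb Z)$ is itself a choice, and the realization condition only asks for \emph{some} path with the prescribed period); so you could have used density here to take $v_i$ arbitrarily small and replaced your whole multi-step decomposition by a single short movement. Your claim that density is used only once is therefore not so much a virtue as an indication that you are giving up information. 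Second, "composing the elementary surgeries with mutually disjoint supports" is not quite the right framing when the $v_i$ are large: the intermediate movement paths for different $P_i$ can certainly cross, and what actually saves you is sequential application together with the fact that moving $P_i$ does not alter the class of relative periods from $P_0$ to $P_j$ for $j\neq i$ as long as the twin paths avoid $P_0$ and $P_j$. With those two caveats, the argument goes through; the paper's proof simply shortcuts all of it by spending density again.
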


\begin{proof}
    We argue by induction on the cardinality of the marked points, \textit{i.e.} on the cardinality $k+1$ of $Z$. The base case $k=0$ holds as a consequence of Remark \ref{rmk:notrirep} along with \cite[Theorem 1.2]{BJJP}. Suppose the claim holds for $|\,Z\,|=k$ and let us show that it holds for $|\,Z\,|=k+1$. Let \(\delta\) be a path joining \(z_o\) and \(z_1\). Since $\chia$ is dense, we can choose a suitable representative of $\delta$ such that  $|\,\chi(\,\delta\,)\,|$ is as small as we want. By induction, we can first realize $\chi$ restricted to any connected component of the stratum $\mathcal{H}_g(a_o+a_1, a_2, \ldots, a_k)$, and then break up the zero of order $a_o+a_1$, as in \S\ref{sec:zerobreak}, to get a zero of order $a_o$ at the same location with a nearby newborn zero of order $a_1$ such that the saddle connection joining them gives the desired (short) relative period $\chi(\,\delta\,)$. Note that this operation does not affect the other absolute and  relative periods.
\end{proof}


\subsection{Dense absolute representations in $\mathbb{Z} \times \mathbb{R}{\rm i}$} In the present subsection, we assume that the image of $\chia$ spans $\mathbb{Z}$ in the horizontal direction and is dense in the vertical direction. Once again, according to \cite{BJJP}, for a dense absolute representation $\chia$ its realization in any connected component of a given stratum $\mathcal H_g(\,\mu\,)$ is not subject to any restriction. We can then strengthen these results with the following 

\begin{prop}
    Let $\chi\colon \textnormal{H}_1(\,S_{g},\,Z,\,\mathbb Z\,)\longrightarrow \mathbb{C}$ be a representation and assume its absolute period representation $\chia$ is semi-dense. Then $\chi$ can be realized in every connected component of the stratum $\mathcal H_g(\,\mu\,)$ for every partition $\mu=(a_o,\dots,a_k)$ of $2g-2$ where $k+1=|\,Z\,|$.
\end{prop}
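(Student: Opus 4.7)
The plan is to mirror the inductive strategy used for dense absolute representations, with a modification to handle the fact that in the semi-dense setting the magnitude $|\chi(\delta)|$ cannot be made arbitrarily small by adjusting the representative of $\delta$. I will induct on $|Z|=k+1$. The base case $k=0$ is handled by Remark~\ref{rmk:notrirep} together with \cite[Theorem 1.2]{BJJP}, since a semi-dense absolute representation satisfies the realization criteria in every connected component of the minimal stratum.

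For the inductive step, fix a path $\delta$ joining $z_o$ and $z_1$ and apply the inductive hypothesis to $\chi$ restricted to $\textnormal{H}_1(S_g, Z\setminus\{z_1\}, \mathbb Z)$, realizing it in the prescribed connected component of $\mathcal H_g(a_o+a_1, a_2, \dots, a_k)$. This yields a translation surface carrying a single zero $P$ of order $a_o+a_1$ that is to be split into zeros representing $z_o$ and $z_1$. Since $\textnormal{Im}(\chi_a)=\mathbb Z\oplus U\mathrm i$ with $U\subset\mathbb R$ dense, we may adjust the representative of $\delta$ by absolute periods so that $\chi(\delta)=p+q\mathrm i$ with $p\in(-1/2,1/2]$ and $|q|$ arbitrarily small; in particular $|\chi(\delta)|\le \tfrac{1}{2}+\epsilon$ for any prescribed $\epsilon>0$.

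The hard part, unlike the dense case, is that this bound need not be small enough for the break-up surgery of \S\ref{sec:zerobreak} to be applied directly with displacement $\chi(\delta)$, since that surgery requires the displacement to fit inside an embedded disk at $P$. The plan for overcoming this is to combine two surgeries: first perform the break-up at $P$ with a very small displacement $c_o$ chosen to fit inside an embedded disk around $P$, producing a zero of order $a_o$ at the original position and a newborn zero of order $a_1$ at distance $|c_o|$; then iteratively apply the movement-of-zeros surgery of Definition~\ref{def:move} to the newborn zero along a piecewise-geodesic path pointing in the direction of $\chi(\delta)-c_o$, with each step small enough that the $a_1+1$ required twin paths exist, remain embedded, and avoid the remaining singularities.

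Since the movement of zeros leaves all absolute periods invariant as well as every relative period not involving $z_1$, and modifies the period of $\delta$ precisely by the translation applied to $z_1$, after finitely many iterations the total displacement of the newborn zero agrees with $\chi(\delta)$ modulo $\textnormal{Im}(\chi_a)$. The step I expect will demand the most care is verifying that the iterative movement can be carried out without collisions with the remaining singularities and that the twin paths remain embedded at each stage; this can be arranged by choosing sufficiently short steps and, if necessary, perturbing the direction of motion slightly, consistent with the required total displacement modulo $\textnormal{Im}(\chi_a)$, in the spirit of the analogous constructions of \cite{CDF, CDF2}.
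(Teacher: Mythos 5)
Your proposal takes a genuinely different route from the paper. The paper's proof of this proposition is not inductive: it realizes the normalized $\chi_a$ directly in the minimal stratum $\mathcal{H}_g(2g-2)$ as $(X_o,\omega_o)$ with a single zero $z_o$, sets $\varepsilon$ to be the radius of the largest ball around $z_o$ in which any two points are joined by a unique geodesic, and then exploits the simple closed curve of absolute period modulus $2\varepsilon$ together with the density of $\textnormal{Im}(\chi_a)$ in the vertical direction to choose representatives $r_1,\dots,r_k$ of the relative periods with real parts strictly smaller than $\varepsilon$ and with arbitrarily small imaginary parts. After perturbing so that $0 < |r_1| < \cdots < |r_k| < \varepsilon$, it performs the break-up surgeries one at a time, always at the same point, starting from the farthest $r_k$ and working inward, with the $j$-th break-up executed inside a ball of radius strictly between $|r_{j-1}|$ and $|r_j|$. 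Every surgery therefore takes place inside a ball around $z_o$ that contains no other singularity, so there is never any need to move a zero afterward.

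Your approach is conceptually reasonable, but the heart of the matter---iteratively moving the newborn zero of order $a_1$ to its target relative period using the movement-of-zeros surgery---is left essentially as an unargued claim, and it is not immediate. Two issues arise. First, to dodge the other singularities you must zigzag while ultimately hitting a prescribed total displacement $\chi(\delta)-c_o$; a naive ``perturb the direction slightly'' is only admissible if the perturbations cancel exactly, since $\textnormal{Im}(\chi_a)=\mathbb{Z}\oplus U\mathrm{i}$ is discrete in the real direction and cannot absorb a small real error in the cumulative displacement. Second, and more substantially, at every intermediate position one must exhibit $a_1+1$ twin paths at a zero of order $a_1$ that are embedded, pairwise disjoint away from the zero, and disjoint from $z_o, z_2,\dots,z_k$; this is not automatic once the moving zero comes close to another singularity or crosses a region with short saddle connections. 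As written, these are genuine gaps. They may well be fillable in the spirit of \cite{CDF, CDF2}, but the paper's ordered break-up argument is precisely designed to make them unnecessary, and your proposal does not supply the missing verification.
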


\begin{proof}
    Up to replacing $\chi$ with another representation $A\cdot\chi$, where $A\in\text{GL}^+(2,\mathbb R)$, we can assume that the image of $\chia$ to be dense in $\mathbb Z\,\oplus\mathbb R\rm i$. Let $(X_o,\omega_o)\in \mathcal{H}_g(\,2g-2\,)$ be a translation surface with absolute period character $\chia$ and let $z_o$ be the unique zero of $\omega$. Define $\varepsilon=\varepsilon\,(\,X_o,\omega_o,z_o\,)$ as follows: 
    \begin{equation}
        \varepsilon= \sup\Big\{\, \rho\in\mathbb R^+\,\big|\, \textnormal{ for any }\,x,y\in B_\rho(\,z_o\,) \textnormal{ there is a unique geodesic joining them }\,\Big\}.
    \end{equation}
    
    \noindent Notice that there is a simple closed curve whose absolute period has modulus equal to $2\varepsilon$. Now, up to absolute periods, we can assume that the relative period $r_i$ of the path joining $z_o$ to $z_i$ satisfies that the real part of $r_i$ is bounded strictly by $\varepsilon$ and the imaginary part of $r_i$ can be chosen arbitrarily up to an arbitrarily small error term. Order the zeros $z_1, \ldots, z_n$ according the real part distances of $r_i$ to $z_o$, from near to far. If some of them have the same real part, perturb their imaginary parts a little bit by using absolute periods, and hence we can assume  
    \begin{equation} 
        0 < |r_1| < |r_2| < |r_3| < \cdots < |r_k| < \varepsilon 
    \end{equation} 
    while keeping their imaginary parts small. First on $(X_o,\omega_o)$, we split the unique zero in a neighborhood of radius less than $\varepsilon$ so that the newborn zeros are joined by a relative period $r_k$, \textit{i.e.} the resulting translation surface $(X_1,\omega_1)\in\mathcal{H}_g(\,2g-2 - a_k, a_k\,)$, where we used the assumption $|r_k| < \varepsilon$. Next, we split the newborn zero of order $2g-2-a_k$ in a smaller neighborhood of radius between $|r_{k-1}|$ and $|r_k|$ to two zeros with relative period $r_{n-1}$ and the resulting structure belongs to $\mathcal{H}_g(\,2g-2 - a_{k-1} - a_{k},\, a_{k-1},\, a_k\,)$, where we used the assumption $|r_{k-1}| < |r_k|$. Iterating this process, we thus obtain a translation surface $(X_k,\omega_k)\in\mathcal H_g(\,\mu\,)$ with the desired relative period representation.  
\end{proof}

\bigskip

\section{Square-tiled surfaces with prescribed relative periods}\label{sec:sqrspesper}

\noindent We now consider a representation $\chi\colon \textnormal{H}_1(S_{g,n},\,Z,\,\mathbb Z)\longrightarrow \mathbb{C}$ such that its absolute period representation $\chia$ has discrete image in $\mathbb C$. Under this assumption, Haupt's Theorem in \cite{OH} states that $\chia$ can be realized as the period character of some translation surface if its volume is strictly positive and hence the image of $\chia$ spans a lattice $\Lambda$ in $\mathbb C$. Moreover, we assume without loss of generality that $\Lambda=\ziz$. In what follows, let $T$ denote the topological torus and let $\mathbb T$ denote the flat torus $\mathbb C\,/\,\ziz$. In the present section, we only consider the holomorphic case. It is customary to define a translation surface $(X,\omega)$ as square-tiled surface if there exists a branch covering, say $\pi\colon X\longrightarrow \mathbb T$, with a unique branch value such that $\omega=\pi^*dz$. In the present work, however, we need to extend this definition as follow.

\begin{defn}[Square-tiled surface]\label{defn:transurf}
    A translation surface $(X,\omega)$ is defined as a \textit{square-tiled surface} if there exists a branch covering $\pi\colon X\longrightarrow \mathbb T$ such that $\omega=\pi^*dz$. A translation surface is defined to be an \textit{origami} if $\pi$ is branched at a single point.
 \end{defn}

\noindent In other words, in our setting, for a translation surface $(X,\omega)$, its zeros do not need to map to the same branch point of $\mathbb T$. See also Remark \ref{rmk:STS}. As a consequence, we have the following

\begin{lem}
    A translation surface is an origami if and only if its relative periods are all integral in $\ziz$.
\end{lem}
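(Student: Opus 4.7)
The plan is to use the developing map perspective from \S\ref{ssec:gpovts} to build the covering $\pi\colon X\longrightarrow \mathbb T$ explicitly from the integrality hypothesis, and then to match the branch locus of $\pi$ with the images of the zeros under $\pi$.

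First I would set up the covering map in both directions. Since $\chi_a$ takes values in $\ziz$, any developing map $\textnormal{dev}\colon \mathbb H \longrightarrow \mathbb C$ associated with $(X,\omega)$ is equivariant under a $\pi_1(X)$-action that factors through translations in $\ziz$. Composing with the quotient $\mathbb C\longrightarrow \mathbb C/\ziz=\mathbb T$ descends to a well-defined holomorphic map $\pi\colon X\longrightarrow \mathbb T$ satisfying $\pi^*dz=\omega$; this realizes $(X,\omega)$ as a square-tiled surface in the sense of Definition~\ref{defn:transurf}. Conversely, any such $\pi$ determines $(X,\omega)$. The key point, which I would record explicitly, is that the branch points of $\pi$ are exactly the zeros of $\omega=\pi^*dz$, with the order of the zero at $z_i$ equal to the local ramification index minus one. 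Thus $\pi$ is branched at a single value of $\mathbb T$ if and only if $\pi(z_0)=\pi(z_1)=\cdots=\pi(z_k)$ as points of $\mathbb T$.

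Next I would translate the condition on relative periods into this language. Fix lifts $\tilde z_i\in\mathbb H$ of each zero $z_i$. For any path $\delta_i$ in $X$ from $z_0$ to $z_i$, its lift terminates at some $\tilde z_i^{\,\prime}$ in the $\pi_1(X)$-orbit of $\tilde z_i$, and equivariance gives
\begin{equation}
\int_{\delta_i}\omega \;=\; \textnormal{dev}(\tilde z_i^{\,\prime})-\textnormal{dev}(\tilde z_0) \;\equiv\; \textnormal{dev}(\tilde z_i)-\textnormal{dev}(\tilde z_0)\pmod{\ziz},
\end{equation}
so the class in $\mathbb C/\ziz$ is exactly $\pi(z_i)-\pi(z_0)$ and is independent of the choice of $\delta_i$, as any two such paths differ by an absolute cycle whose period is already in $\ziz$.

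From here the equivalence is immediate in both directions. If $(X,\omega)$ is an origami, then after choosing the unique branch value of $\pi$ to be $0\in\mathbb T$ we have $\pi(z_i)-\pi(z_0)=0$ in $\mathbb T$ for every $i$, so $\int_{\delta_i}\omega\in\ziz$, giving integrality of all relative periods. Conversely, if every relative period lies in $\ziz$, then $\pi(z_i)=\pi(z_0)$ in $\mathbb T$ for every $i=1,\dots,k$, so all zeros of $\omega$ map to the same point of $\mathbb T$, and since the branch values of $\pi$ are precisely the images of the zeros, $\pi$ is branched over a single point and $(X,\omega)$ is an origami. There is no real obstacle here; the only thing to handle carefully is the path-independence of the reduction modulo $\ziz$, which follows from the standing assumption that the absolute periods lie in the lattice $\ziz$.
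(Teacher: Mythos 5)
Your proof is correct and fills in exactly what the paper's ``follows directly from our definitions'' is gesturing at. You explicitly construct $\pi\colon X\to\mathbb T$ from the integrality of the absolute periods via the developing map, identify the ramification locus of $\pi$ with $Z(\omega)$, and observe that a relative period reduces modulo $\ziz$ to the well-defined difference $\pi(z_i)-\pi(z_0)\in\mathbb T$; the equivalence then follows immediately, and the path-independence step you flag (two lifts of $z_i$ differ by a deck transformation whose developed image lies in $\ziz$) is precisely the point that makes this reduction unambiguous. One small caveat worth recording: your argument, and the lemma itself, tacitly treat $Z$ as the set of genuine zeros of $\omega$ (i.e.\ ramification points of $\pi$), which is the intended reading. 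If $Z$ were allowed to contain marked ordinary points of order zero, as the paper permits elsewhere, such a point is not a ramification point and could map to a regular value of $\pi$ other than the branch value; the covering would still be branched at a single point yet the corresponding relative period would not lie in $\ziz$. This is an ambiguity already present in the paper's phrasing, not a defect of your argument, but it would be cleaner to state explicitly that $Z=Z(\omega)$ with all $a_i\ge1$ in the lemma.
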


\noindent The proof directly follows from our definitions.


\smallskip

\subsection{A necessary condition.}\label{ssec:neccond} Before stating the main result of the present section, we begin by stating the necessary condition for a representation $\chi$ to be realized in a given stratum. Let $\chia\colon \textnormal{H}_1(S_{g},\,\mathbb Z)\longrightarrow \mathbb{C}$ be a discrete representation such that $\text{Im}(\chia)=\ziz$. 
The realizable cases thus correspond to square-tiled surfaces as defined in Definition \ref{defn:transurf}, where ${\rm vol}(\chi)$ is the degree of the corresponding branched cover of the square torus. 

\medskip

\noindent As before, denote by $\Lambda$ the lattice $\mathbb{Z} + \mathbb{Z}{\rm i}$. Decompose $Z = \{z_o, \ldots, z_k\}$ into maximum disjoint subsets, say $Z_o, \ldots, Z_{\ell}$, such that for any $\gamma$ joining $z_{i_1}, z_{i_2}\in Z_i$ the image $\chi(\gamma) \in \Lambda$. Then $\chi$ arises as the period character of some pair $(X,\omega)$ of type $\mu = (a_o, \dots, a_k)$ if and only if there exists a primitive branched cover $\pi\colon X\longrightarrow \mathbb{C}/\Lambda$ of degree $d = {\rm vol}(\,\chi\,)$ such that $\pi$ has $\ell+1$ branch points $y_o, \ldots, y_\ell \in \mathbb{C}/\Lambda$ and over each $y_i$ the ramification points are the zeros in $Z_i$ with ramification orders given by the zero orders. Without loss of generality, we can choose the positions of $y_o, \ldots, y_\ell$ in the unit square such that any two of them have distinct horizontal and vertical coordinates.  

\smallskip

\noindent Let $\mu^{\vdash} = (\mu_o;\, \dots;\, \mu_\ell)$ be a partition of a given signature $\mu$ into $\ell + 1$ disjoint subsets, where each tuple $\mu_i$ records the orders of the zeros in $Z_i$. Let $|\,\mu_i\,|$ be the sum of the entries and $|\,Z_i\,|$ be the order of the subset. Define 
\begin{equation}\label{eq:neccondtoreal}
    {\rm m}(\,\chi\,,\,\mu^\vdash\,) = \max_{0\le i\le\ell} \Big\{\, |\,\mu_i\,| + |\,Z_i\,|\,\Big\}.
\end{equation}

\noindent As a consequence, the above description implies the following necessary condition for a period character $\chi$ to be realizable: 
\begin{eqnarray}
\label{eq:hol-necessary}
 \deg(\,\pi\,)={\rm vol}(\,\chia\,) \geq {\rm m}(\,\chi\,,\, \mu^\vdash\,) 
 \end{eqnarray}
by considering the degree of $\pi$. 

\medskip

\textit{Convention.} In order to avoid any ambiguity, in what follows we write $\mu^\vdash=\mu$ if and only if each element of the partition is a singleton; that is $\mu_i=\{\,a_i\,\}$ for all $i=0,\dots, k$. In this case,  we say that $\mu$ is \textit{fully partitioned}. On the other extreme, we say that $\mu$ is \textit{not partitioned} if $\ell=0$. We call each element $\mu_i$ of $\mu^\vdash$ a sub-signature.

\medskip

\begin{rmk}\label{rmk:shcd}
    For an absolute period representation $\chia\colon\shomolz\longrightarrow \mathbb C$, in \cite[Theorem 1.1]{fils} and \cite[Theorem 1.1]{BJJP} the authors have independently showed that if the image $\text{Im}(\,\chia\,)=\Lambda$ is a lattice,  then $\chia$ can be realized in the stratum $\hm$ if and only if 
    \begin{equation}\label{eq:hol-necweaker}
        {\rm vol}(\,\chia\,) \ge \max_{0\le i\le k} \big(\,a_i+1\,\big)\,{\rm area}(\,\mathbb C/\Lambda\,)
    \end{equation}
    where $\mu=(a_o,\dots,a_k)$. Notice that inequality \eqref{eq:hol-necessary} turns out to be a stronger condition than \eqref{eq:hol-necweaker} and they provide the same constraint if and only if $Z$ decomposes into $k+1$ singletons, {\em i.e.} $\mu_i=\{\,a_i\,\}$ for all $i$. This happens because prescribing relative periods generally imposes further conditions on the volume, and hence on the degree of the branched covering. In fact, given $\chi\colon \textnormal{H}_1(S_{g},\,Z,\,\mathbb Z)\longrightarrow \mathbb{C}$, the cardinality of $|\,Z\,|$ determines the length of the signature $\mu$, but it does \textit{not} necessarily determine a partition $\mu^\vdash$. As a direct consequence, the value of ${\rm m}(\,\chi\,,\,\mu^\vdash\,)$ strongly \textit{depends} on $\mu^\vdash$ whereas the volume only depends on $\chia$. In particular, square-tiled surfaces with the lowest possible volume have no (strictly) relative periods in $\Lambda$. 
\end{rmk}

\begin{rmk}\label{rmk:relperint}
    It is worth recalling that $\mathbb T$ admits a natural group structure, with neutral element $0_{\mathbb T}$, inherited by the additive structure on $\mathbb C$. Moreover, such an operation is compatible with the metric structure $d_{\mathbb T}$ induced by the Euclidean distance in $\mathbb C$. In fact,
    \begin{equation}
        d_{\mathbb T}(\,p_1,\,p_2\,)=|\,p_1-p_2\,|.
    \end{equation}
    It is not hard to see that $\big(\,\mathbb T,\,d_{\mathbb T}\,\big)$ turns out to be a geodesic space, but it fails to be a \textit{uniquely} geodesic space, see \cite[Chapter I.1, page 4]{BH}. As a consequence, since relative periods with the same endpoints differ by an element in $\ziz$, we can regard a relative period as a well-defined element of $\mathbb T$. 
\end{rmk}


\subsection{Realization of branched covers} Next,  we will show that the necessary condition described in \S\ref{ssec:neccond} is also sufficient, \textit{i.e.} we aim to show the following

\begin{prop}\label{prop:branchcoverings}
    Let $\mu$ be a signature of genus $g$ holomorphic differentials and let $\mu^\vdash$ be a partition of $\mu$. For each integer $d \ge {\rm m}(\,\chi\,,\mu^\vdash\,)$, there exists a primitive branched covering $\pi\colon X\longrightarrow  \mathbb C/\ziz$ of degree $d$ such that $(X, \,\pi^*dz)$ belongs to $\mathcal H_g(\,\mu\,)$.
\end{prop}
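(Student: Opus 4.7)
My plan is to construct $\pi$ by specifying its monodromy representation. A degree $d$ branched cover $\pi \colon X \to \mathbb{T}$ with $\omega = \pi^*dz$ is precisely a degree $d$ square-tiled surface over $\mathbb{T}$ in the sense of Definition \ref{defn:transurf}. Such a cover is determined by two permutations $\alpha, \beta \in S_d$ (horizontal and vertical monodromies around the standard generators of $\pi_1(\mathbb{T})$) together with local monodromies $s_0, \dots, s_\ell \in S_d$ around the branch points $y_0, \dots, y_\ell$. Placing the $y_i$ in the unit square with pairwise distinct horizontal and vertical coordinates, the standard presentation of $\pi_1(\mathbb{T} \setminus \{y_0, \dots, y_\ell\})$ yields the single constraint
\[
    [\alpha, \beta] = s_0 \, s_1 \cdots s_\ell.
\]

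First I would select each $s_i$ to have cycle type $(a_j + 1)_{j \in Z_i}$ together with $d - |\mu_i| - |Z_i|$ fixed points. Such a choice is possible exactly when $d \ge |\mu_i| + |Z_i|$, which is guaranteed by the hypothesis $d \ge \mathrm{m}(\chi, \mu^\vdash)$. A parity check shows that $\prod_i s_i$ has sign $(-1)^{\sum_j a_j} = (-1)^{2g-2} = +1$, so it is even and, by classical results on commutator factorizations in symmetric groups, admits an expression $[\alpha, \beta]$ in $S_d$.

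Next, I would pick $\alpha, \beta$ so that the cover is connected. Transitivity of $\langle \alpha, \beta, s_0, \dots, s_\ell \rangle$ on $\{1, \dots, d\}$ is easy to enforce, for instance by taking $\alpha$ to be a $d$-cycle among the admissible solutions. Riemann--Hurwitz then gives that $X$ has the prescribed genus $g$, and by construction the zeros of $\pi^*dz$ at the preimages of the $y_i$ have orders given exactly by $\mu$, placing $(X, \pi^*dz) \in \mathcal{H}_g(\mu)$.

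The main obstacle is ensuring \emph{primitivity}: equivalently, that $\langle \alpha, \beta, s_0, \dots, s_\ell \rangle$ preserves no nontrivial block system on $\{1, \dots, d\}$, so that $\pi$ does not factor through any nontrivial intermediate covering. For $d$ prime this is automatic from transitivity. For composite $d$ one must rule out each divisor-sized block system; I would do so by adjusting $\beta$ or conjugating individual $s_i$'s while preserving the commutator relation, occasionally invoking the zero-breaking surgery from \S\ref{sec:zerobreak} to geometrically reshuffle sheets when combinatorial adjustments are not enough. Carrying this out uniformly in $d$ and in the partition $\mu^\vdash$, especially near the boundary $d = \mathrm{m}(\chi, \mu^\vdash)$ where there is the least slack, is the delicate part of the construction.
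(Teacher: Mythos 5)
Your combinatorial setup (branched covers of the torus are encoded by permutations $\alpha,\beta,s_0,\dots,s_\ell\in S_d$ satisfying $[\alpha,\beta]=s_0\cdots s_\ell$, with transitivity for connectedness and primitivity to forbid intermediate covers) is the right framework and is also the implicit framework used in the paper's \S\ref{ssec:sqtsconnstra}. Choosing the cycle types of the $s_i$ according to $\mu^\vdash$, padding with fixed points, and noting that $d\geq {\rm m}(\chi,\mu^\vdash)$ is exactly what makes these cycle types fit inside $S_d$ is all correct.

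The difficulty is that you do not actually prove the three nontrivial claims: that $s_0\cdots s_\ell$ is a commutator realizable with a compatible $\alpha,\beta$, that transitivity can be arranged simultaneously, and, most seriously, primitivity. You explicitly flag primitivity as ``the delicate part,'' and the sketch you give (adjust $\beta$, conjugate the $s_i$, break zeros) is not a proof. In particular, invoking the zero-breaking surgery of \S\ref{sec:zerobreak} is a red herring here: breaking a zero changes the ramification profile and hence the branch data, so it cannot be used to pass between covers with the same $\mu^\vdash$ and degree $d$ while killing a block system. The statement you are trying to prove from scratch is precisely the (nontrivial) Hurwitz existence problem for \emph{primitive} branched covers of the torus with prescribed branch data, and this is where the paper simply cites the literature: after packaging $\mu^\vdash$ into a branch datum $\mathcal B=[B_0,\dots,B_\ell]$, it appeals to \cite{BGKZ} and \cite{BGKZ2}, which establish exactly that such primitive covers exist whenever the obvious numerical constraints (here, $d\ge{\rm m}(\chi,\mu^\vdash)$ together with Riemann--Hurwitz) are met. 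You should do the same rather than attempt a self-contained commutator/primitivity argument, which is a substantial and independent problem. A secondary point: your claim that one can ``take $\alpha$ to be a $d$-cycle among the admissible solutions'' is a genuine restriction (not every even permutation is a commutator of a fixed $d$-cycle with something) and is not justified as stated; the cited references handle transitivity and primitivity together, which is the clean route.
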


\noindent As a direct consequence, the following holds. 

\begin{cor}
    Let $\chi\colon\textnormal{H}_1(\,S_{g},\,Z,\,\mathbb Z\,)\longrightarrow \mathbb C$ be a representation and let $|\,Z\,|=k+1$. Let $\mu=(a_o,\dots,a_k)$ be a partition of $2g-2$. Then $\chi$ can be realized in a connected component of $\hm$ if and only if $\chia$ can be realized in the same connected component.
\end{cor}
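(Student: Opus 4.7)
The direction $(\Rightarrow)$ is immediate from the definitions: if $(X,\omega)$ realizes $\chi$ in a connected component $\mathcal{C}$ of $\hm$, then it also realizes $\chia$ in $\mathcal{C}$, since $\chia$ is obtained from $\chi$ by restriction to absolute homology.

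For $(\Leftarrow)$, suppose $\chia$ is realized by some $(X_0, \omega_0) \in \mathcal{C}$. Normalizing $\operatorname{Im}(\chia) = \ziz$, the pair $(X_0, \omega_0)$ is a square-tiled surface arising from a branched cover of degree $d = \operatorname{vol}(\chia)$, and by Remark \ref{rmk:relperint} its relative period character $\chi_0$ records the positions of the zeros in $\mathbb{T}$ compatibly with the forced clustering $\mu^\vdash$. The plan is to deform $(X_0,\omega_0)$ to a realization of $\chi$ by a sequence of Schiffer variations, that is, the movement of zeros of Definition \ref{def:move}. For each zero $z_i$, let $v_i \in \mathbb{T}$ denote the shift from its current position (determined by $\chi_0$) to its target position (determined by $\chi$), chosen with a representative in a fundamental domain of $\Lambda$. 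Applying the surgery of Definition \ref{def:move} to $z_i$ along a twin path whose direction and length match $v_i$ shifts the developed image of $z_i$ by $v_i$ while preserving every absolute period; being a continuous deformation inside $\hm$, it also preserves the connected component $\mathcal{C}$. Performing such surgeries successively at each $z_i$ yields a realization of $\chi$ in $\mathcal{C}$.

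The main obstacle is ensuring that admissible twin paths of the required direction and length exist on $(X_0,\omega_0)$ throughout the process, and that zeros do not collide prematurely; this is handled by taking the initial realization sufficiently generic within $\mathcal{C}$ and subdividing each shift into small enough increments. Alternatively, and more in line with the explicit constructions pursued in the remainder of this section, one may invoke a refinement of Proposition \ref{prop:branchcoverings} in which the square-tiled realization of $\chi$ is built \emph{ab initio} in the prescribed component: for hyperelliptic components the branch points on $\mathbb{T}$ are placed equivariantly with respect to a prospective involution, while for spin components the gluing combinatorics are arranged so that the index formula \eqref{eq:spinparity}, evaluated on a symplectic basis of $\textnormal{H}_1(X,\mathbb{Z})$, yields the required parity; Lemma \ref{lem:spininv} then ensures that the handle-bubblings used in the construction preserve this invariant. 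Either route completes the converse.
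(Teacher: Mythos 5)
The $(\Rightarrow)$ direction is immediate, as you note. For $(\Leftarrow)$, your Schiffer-variation route is essentially the paper's argument in \S\ref{ssec:nointrelper}, but that argument applies only when no relative period is integral, i.e.\ when $\mu^\vdash = \mu$. It breaks down in the remaining case $\mu^\vdash \neq \mu$, which is precisely where the paper spends \S\ref{ssec:lastcasehyp}--\S\ref{ssec:lastcasespintwo}, and for two reasons. First, if two marked points carry an integral relative period they must end up over the \emph{same} branch value of $\mathbb{T}$; but the movement of zeros (Definition~\ref{def:move}) hinges, as the paper stresses in \S\ref{ssec:nointrelper}, on the downstairs endpoint of $\widehat{c}_i$ being a \emph{regular} value --- this is what rules out the twin paths based at $z_i$ from colliding at another zero. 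Here the final target is a branch value by hypothesis, so neither genericity of the starting realization nor subdividing the shift into small increments can save the surgery: the obstruction appears at the last step, not en route. Second, when $\mu^\vdash \neq \mu$ the degree bound \eqref{eq:hol-necessary} via ${\rm m}(\chi, \mu^\vdash)$ is strictly stronger than $\max_i(a_i + 1)$ (Remark~\ref{rmk:shcd}); a realization of $\chia$ at the lower degree simply has too few sheets to carry two disjoint ramification points over one branch value, and the surgery you propose preserves the degree, so it cannot create the needed sheets.

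Your "alternative route" --- constructing the cover ab initio in the prescribed component, placing the branch points on $\mathbb{T}$ equivariantly in the hyperelliptic case, arranging the gluing combinatorics to control the index formula \eqref{eq:spinparity} in the spin case, and tracking parity under handle-bubbling via Lemma~\ref{lem:spininv} --- correctly names the paper's actual strategy. But as written it defers precisely the explicit constructions of \S\ref{ssec:sqtsconnstra}, \S\ref{ssec:lastcasehyp}, and \S\ref{ssec:lastcasespintwo} (which constitute the bulk of the section), so it does not supply a proof so much as point at one.
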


\noindent 

\noindent In what follows we also need to include the case where some $a_i = 0$, that is, ordinary marked points as zeros of order zero. We first consider the following remark as a premise. 

\begin{rmk}
    Let $g=1$ and $\mu = (0,\ldots,0)$. Line up $d$ unit squares to get a $d\times 1$ rectangle and identify the two pairs of parallel edges of the rectangle to form a torus $X$ of volume $d$. For any $i=0,\dots,\ell$, set $d_i=|\,Z_i\,|$ and for each subset $Z_i=\{z_{i,1}, z_{i,2}, \ldots, z_{i,\,d_i} \}$ of the marked points that map to the same image $y_i$, choose and label the points from the first $|\,Z_i\,|$ unit squares of $X$. Note that \eqref{eq:hol-necessary} ensures that there are enough squares to use.
\end{rmk}

\medskip

\noindent Let us assume $g\ge2$. For a signature $\mu$, we will discuss the realization of $\chi\colon \textnormal{H}_1(S_{g},\,Z,\,\mathbb Z)\longrightarrow \mathbb{C}$ according to the following three mutually disjoint cases:
\begin{itemize}
    \item[1.] the stratum $\hm$ is connected -- see Section \S\ref{ssec:sqtsconnstra},
    \item[2.] no relative period is integral, \textit{i.e.} $\mu^\vdash=\mu$ -- see Section \S\ref{ssec:nointrelper}, or
    \item[3.] the stratum $\hm$ is not connected and $\mu^\vdash\,\neq\, \mu$. In this latter case our proof is given in Sections \S\ref{ssec:lastcasehyp} and \S\ref{ssec:lastcasespintwo}. 
\end{itemize}

\noindent The first two cases are easier to handle and essentially follow from the previous result. The last case, instead, requires more work.

\subsection{Connected strata}\label{ssec:sqtsconnstra} Let $\mu$ be a signature of genus $g$ holomorphic differentials such that the corresponding stratum $\hm$ is connected. According to \cite{KZ}, most of the strata fall into this range. In this case, the realization of $\chi$ into $\hm$ is subject to the realization of a primitive branched cover $\pi\colon S_g\longrightarrow T$ with prescribed branching data. The realization problem has been studied \cite{EKS}; however the problem of realizing primitive branched covers had been solved later in \cite{BGKZ} and \cite{BGKZ2}. We proceed as follows.

\bigskip

\noindent Let $\chi\colon\textnormal{H}_1(S_{g},\,Z,\,\mathbb Z)\to \mathbb{C}$ be a representation with absolute period representation $\chia$. Let $\mu=(a_o,\dots,a_k)$ be a signature of genus $g$ holomorphic differentials such that $|\,Z\,|=k+1$. As above, $\chi$ naturally determines the partition of $Z$ into $\ell+1$ disjoint subsets $Z_i$ of cardinality $r_i$. Let $\mu^\vdash=(\mu_o;\dots;\mu_{\ell})$ be any partition of $\mu$ such that $\mu_i=\left(\,a_{i\,1},\dots,a_{i\,r_i}\,\right)$ has cardinality $r_i$. Let $d={\rm m}(\,\chi\,,\,\mu^\vdash\,)$ and, without loss of generality, we can assume that 
\begin{equation}
    d\,=\,|\,\mu_1\,|\,+\,|\,Z_1\,|= \sum_{j=1}^{r_1} a_{1j}\,+\,r_1=\sum_{j=1}^{r_1} \big(\,a_{1j}\,+\,1\,\big).
\end{equation}

\noindent We now define a \textit{branch data}, namely we use the partition $\mu^{\vdash}$ to define the necessary algebraic conditions, provided by Riemann--Hurwitz's formula, to the existence of a branched cover. Let $B_o=\left [a_{01}+1,\dots,a_{0r_1}+1 \right]$. For all $i=1,\dots,\ell$, we set

\begin{equation}
    B_i=\left [a_{i1},\dots,a_{ir_i}, 1,\dots,1 \right]
\end{equation}

\noindent where the entry $1$ appears $d-\left(\,|\,\mu_i\,|\,+\,r_i\, \right)$ times.

\begin{rmk}
    This is equivalent to extend $\mu_i=\left(\,a_{i\,1},\dots,a_{i\,r_i}\,\right)$ to a partition $\left(\,a_{i\,1},\dots,a_{i\,r_i}\,0,\dots,0\,\right)$
    where the entry $0$ appears $d-\left(\,|\,\mu_i\,|\,+\,r_i\, \right)$ times. Moreover, the following identity now holds:
    \begin{equation}
        d\,= \sum_{j=1}^{r_i} \big(\,a_{ij}\,+\,1\,\big)+\,\Big(\, d-(\,|\,\mu_i\,|\,+\,r_i\,)\,\Big)\,.
    \end{equation}
\end{rmk}

\noindent The datum $\mathcal B=[B_o,\dots,B_{\ell}]$ thus defines a branch data and \cite{BGKZ} ensures the existence of a primitive branched cover, say $\pi\colon S_g\longrightarrow T$ with branch data $\mathcal B$. Notice that by turning $T$ into $\mathbb C/\ziz$, this argument would be enough to realize $\chia$ in the stratum $\hm$ by pulling back the translation structure of $\mathbb T$ on $S$. In order to realize $\chi$ in $\hm$, we also need to prescribe the branched values, say $y_o,\dots,y_\ell$. For this purpose, let us consider a subset $\{\,z_o,\dots,z_\ell\,\}\subset Z$ such that $z_i$ is a representative of $Z_i$ for every $i=0,\dots,\ell$. Since any one of these points can be equally entitled as the base point,  we fix it as $z_o$ for simplicity. For $i=1,\dots,\ell$, let $\delta_i$ be a path joining $z_o$ with $z_i$. It clearly follows from the assumptions that
\smallskip
\begin{itemize}
    \item[(i)] $\chi(\,\delta_i\,)=r_i\notin\,\ziz\,$ for every $i=1,\dots,\ell$, and
    \item[(ii)] $\chi(\,\delta_i\delta_j^{-1}\,)=\chi(\,\delta_i\,)-\chi(\,\delta_j\,)\notin \ziz\,$ for every $i,j \in\{1,\dots,\ell\}$ and $i\neq j$.
\end{itemize}
\smallskip
\noindent Let $y_o=\pi(\,z_o\,)$ and define $y_i=y_0\,+\,r_i$. As a direct consequence of Remark \ref{rmk:relperint}, $y_i\in\mathbb T$ is well-defined. Observations (i) and (ii) above guarantee that $y_i\neq y_j$ for all $i,j\in\{0,\dots,\ell\}$. Therefore, by pulling back the differential $dz$ from $\mathbb C/\ziz$ via $\pi$, we obtain a square-tiled surface in the stratum $\hm$ with the desired relative periods.

\begin{rmk}
    Notice that the same argument shows that $\chi$ can be realized in every stratum. Therefore, the next sections focus on the realization in connected components of strata whenever they are disconnected.
\end{rmk}


\smallskip

\subsection{No integral relative periods}\label{ssec:nointrelper}

\noindent Next, we focus on signatures $\mu=(a_o,\dots,a_k)$ of genus $g$ holomorphic differentials such that $\hm$ has at least two connected components. Let $\chi$ be a representation such that no relative period is integral. This is equivalent to saying that $\mu^{\vdash}=\mu$ and hence ${\rm m}(\,\chi,\,\mu^{\vdash}\,)=\max \big(\,a_i+1\,)$, comparing \eqref{eq:neccondtoreal} and \eqref{eq:hol-necweaker}. According to \cite[Proposition 1.3]{BJJP}, for any $d\ge {\rm m}(\,\chi,\,\mu^{\vdash}\,)$, we can realize the absolute period representation $\chia$ in every connected component of $\hm$. Therefore, it remains to prescribe the relative periods. For this purpose, we will use Schiffer variations, \textit{i.e.} movement of branched points, see Paragraph \S\ref{sssec:schiffer}.

\medskip

\noindent For this purpose, let $(X,\omega)$ be a translation surface realized in a prescribed connected component of $\hm$ as done in \cite{BJJP}. By construction, such a structure comes with a branched covering map $\pi\colon X\longrightarrow \mathbb T$ with branch values at $\{\,y_o,\,\dots,y_k\,\}$. Since no pair of zeros has integral period, it follows that every $y_i$ has only one zero, say $z_i$, in its fiber. Denote by $z_o$ the sole zero in the fiber of $y_o$ and fix it as the base point. For every $i=1,\dots,k$, let $\delta_i$ be a path that joins $z_o$ with $z_i$ and let $r_i$ be the corresponding relative period. 

\smallskip

\noindent By Remark \ref{rmk:relperint}, $y_i=y_o\,+\,r_i$ holds and hence, in order to prescribe the relative periods, we need to deform the branched covering $\pi$ so that $y_i$ satisfies in $\mathbb T$ the equation
\begin{equation}\label{eq:condcover}
   y_i=y_o\,+\, \chi(\,\delta_i\,)
\end{equation}

\noindent for each $i=1,\dots,k$. We will deform $\pi$ by using the movement of zeros, see Section \S\ref{sssec:schiffer}. Let $\widetilde{\mathbb T}$ be the universal cover of $\mathbb T$, with covering projection $p\colon\widetilde{\mathbb T}\longrightarrow \mathbb T$, and fix a conformal identification $\widetilde{\mathbb T} \simeq \mathbb C$ whereby $\pi_1(\mathbb T)$ acts on $\mathbb C$ via translations and such that $p^{-1}(y_o)=\ziz$. Two possible situations can occur. Either
\begin{itemize}
    \item[1.] $p^{-1}(y_i)=\chi(\,\delta_i\,)\,+\,\ziz$; that means $r_i=\chi(\,\delta_i\,)$. In this case, $z_i$ is well located and we do not need to move it elsewhere in $(X,\omega)$, or
    \smallskip
    \item[2.] $r_i\neq \chi(\,\delta_i\,)$. In this latter case, we need to move $z_i$ as in Definition \ref{def:move} so that the resulting relative period is as desired. 
\end{itemize}

\noindent Suppose we are in the second case. Since $r_i\neq \chi(\,\delta_i\,)$, it readily follows that $\chi(\,\delta_i\,)\,+\,\ziz$ and $r_i\,+\,\ziz$ are disjoint discrete subsets of $\mathbb C$. In particular, there are two points, say $\widetilde{y}_i\in r_i\,+\,\ziz$ and $\widetilde{p}_i\in \chi(\,\delta_i\,)\,+\,\ziz$ that realize the distance between these subsets -- notice that such a distance is strictly positive because both sets are discrete and disjoint.

\smallskip

\noindent Let $\widetilde{c}_i\subset \mathbb C$ be the geodesic segment joining $\widetilde{y}_i$ and $\widetilde{p}_i$ and let $\widehat{c_i}$ be its projection in $\mathbb T$. Clearly, $\widehat{c}_i$ is a geodesic segment in $\mathbb T$ for the induced distance $d_{\mathbb T}$, see Figure \ref{fig_branchpoints}. Notice that $\widehat{c}_i$ being a segment joining $y_i$ and $y_i+r_i$ has relative period $r_i$ by construction.

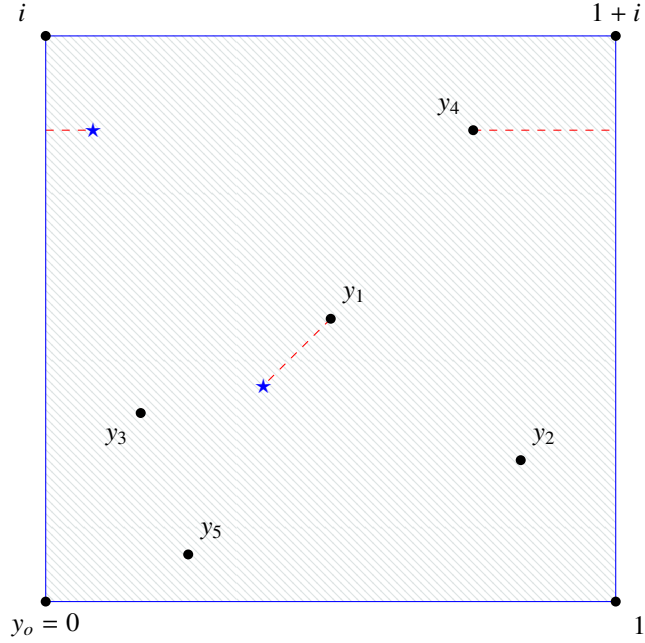
\begin{figure}[h!]
    \centering
    \begin{tikzpicture}[scale=1.25, every node/.style={scale=1}]
    \definecolor{pallido}{RGB}{221,227,227}
    
    \pattern [pattern=north west lines, pattern color=pallido]
    (-3,-3)--(3,-3)--(3,3)--(-3,3)--(-3,-3);

    \draw [red, dashed] (0,0) to (-0.707,-0.707);
    \draw [red, dashed] (1.5,2) to (3,2);
    \draw [red, dashed] (-3,2) to (-2.5, 2);

    \draw [blue] (-3,-3) to (3,-3);
    \draw [blue] (3,-3)  to (3,3);
    \draw [blue] (3,3)   to (-3,3);
    \draw [blue] (-3,3)  to (-3,-3);

    \fill (-3,-3) circle (1.5pt);
    \fill (3,-3) circle (1.5pt);
    \fill (3,3) circle (1.5pt);
    \fill (-3,3) circle (1.5pt);

    \fill (0,0) circle (1.5pt);
    \fill (2,-1.5) circle (1.5pt);
    \fill (-2,-1) circle (1.5pt);
    \fill (1.5,2) circle (1.5pt);
    \fill (-1.5,-2.5) circle (1.5pt);
    
    \node at (-3,-3.25) {$y_o=0$};
    \node at (-3.25,3.25) {$i$};
    \node at (3.25,-3.25) {$1$};
    \node at (3,3.25) {$1+i$};
    
    \node at (0.25,0.25) {$y_1$};
    \node at (2.25,-1.25) {$y_2$};
    \node at (-2.25,-1.25) {$y_3$};
    \node at (1.25,2.25) {$y_4$};
    \node at (-1.25,-2.25) {$y_5$};

    \node[blue] at (-0.707,-0.707) {$\star$};
    \node[blue] at (-2.5,2) {$\star$};
   
    \end{tikzpicture}
    \caption{Location of branch values on $\mathbb T$. The blue stars denote where we want to move the zeros and the red dashed lines denote the segments $\widehat{c}_i$ to be lifted to obtain the twins needed to perform the movement. }
    \label{fig_branchpoints}
\end{figure}

\smallskip

\noindent We use the covering $\pi$ to lift $\widehat{c_i}$ to a collection $\{\,c_{i,1},\dots,c_{i,a_i+1}\,\}$ of $a_i+1$ pairwise twin paths based at $z_i$. In fact, the lifting path property fails at every ramification point $z_i$ of $\pi$. Suppose for a moment that every pair of twin paths intersects only at $z_i$. We thus perform a movement of zeros to deform $\pi$ to a new branched covering having $r_i$ as the branch value in place of $y_i$. By repeating this operation for every index such that $r_i\neq\widetilde{y}_i$, after a finite number of steps, we obtain a translation surface $(Y,\xi)$ with the desired relative periods and invariants.

\smallskip

\noindent To conclude, the following two things remain to be addressed. Firstly, we have assumed that every pair of twins intersects only at $z_i$. We show that this is always the case. 
The key point is that the extremal point of $\widehat{c}_i$ other than $y_i$ must be a regular value. In fact, if two twins based at $z_i$ intersect at any other point, then this latter point is necessarily a zero, say $z_j$, that projects to $y_j$. This leads to the desired contradiction because $r_i$ is supposed to be a regular value.

\smallskip

\noindent The second remark is that a segment $\widehat{c}_i$ can contain in its interior a branch value $y_j$. In this case, one lift of $\widehat{c}_i$ might pass through the zero $z_j$. We bypass this issue by moving $z_j$ a little bit so that $\widehat{c}_i$ does no longer contain $y_j$ after the movement -- in other words, we make $y_j$ a regular value. We next move $z_i$ as needed and, finally, we put $z_j$ back to make $y_j$ a branch value again. 

\smallskip

\subsection{Realization in the hyperelliptic components of strata}\label{ssec:lastcasehyp}

It remains to consider the realization of representations in the connected components for disconnected strata. From now on, we assume that at least one relative period is integral. Recall that a stratum $\hm$ is called \textit{minimal} if $\mu=(2g-2)$ and not minimal otherwise. We begin with realizing translation surfaces in the hyperelliptic components.

\smallskip

\noindent We first notice that, according to Kontsevich--Zorich's classification of the connected components of strata in \cite{KZ}, a \textit{non-minimal} stratum $\hm$ admits a hyperelliptic component if and only if $\mu$ is a signature of type $(g-1,g-1)$. Since the case $\mu^\vdash=\mu$ is already covered in Section \S\ref{ssec:nointrelper}, throughout this section we always assume $\mu^\vdash\neq\mu$. It follows that 
\begin{equation}
    {\rm m}(\,\chi,\,\mu^{\vdash}\,)= 2g,
\end{equation}

\noindent according to \eqref{eq:neccondtoreal}. For this purpose, we will realize $(X,\omega)$ as a translation surface in the desired connected component of $\hm$ by realizing a branched covering map $\pi\colon X\longrightarrow \mathbb T$ with two ramification points over a single branch value. As the name suggests, a square-tiled surface has a natural tiling with the unit square as a prototype. 

\subsubsection{Realization process in the hyperelliptic component}\label{sssec:hypercomp} Let $\mathcal R$ be a rectangle of size $g\,\times\,2$. By gluing the opposite sides via translation, we obtain a flat torus $T$ of volume $2g$ that comes with a genuine covering map $\pi\colon T\longrightarrow \mathbb T$ of degree $2g$ by construction. Let $q\in\mathbb T$ be any point and let $Q$ denote the set $\{\,q_{1\,1},\dots,q_{1\,2g},q_{2\,1},\dots,q_{2\,2g}\,\}$ of preimages of $q$ via the covering $\pi$. The labels are given so that each point can be identified as follows: $q_{i\,j}$ is the preimage at the $i-$th row from the bottom and at the $j-$th column from the left, see Figure \ref{fig:hypcasebase}.

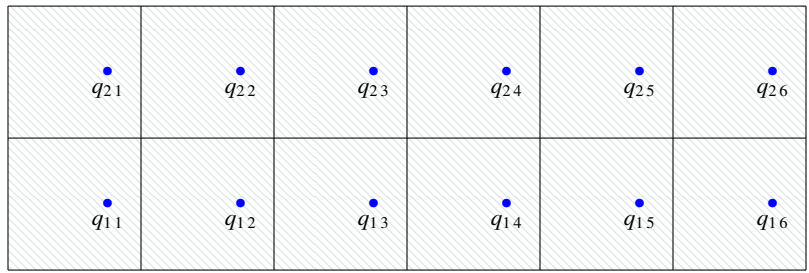
\begin{figure}[h!]
    \centering
    \begin{tikzpicture}[scale=0.875, every node/.style={scale=0.875}]
    \definecolor{pallido}{RGB}{221,227,227}

    \pattern [pattern=north west lines, pattern color=pallido]
    (0,0)--(12,0)--(12,4)--(0,4)--(0,0);
    
    \draw[thin, black] (0,0)--(12,0);
    \draw[thin, black] (0,2)--(12,2);
    \draw[thin, black] (0,4)--(12,4);
    \foreach \x in {0, 2, 4, 6, 8, 10, 12} {
        \draw (\x, 0) -- (\x, 4);
        }

    \foreach \x in {0, 2, 4, 6, 8, 10} {
        \node[blue] at (\x+1.5, 1) {$\bullet$};
        \node[blue] at (\x+1.5, 3) {$\bullet$};
        }

    \node at ( 1.5,0.725)      {$q_{1\,1}$};
    \node at ( 3.5,0.725)      {$q_{1\,2}$};
    \node at ( 5.5,0.725)      {$q_{1\,3}$};
    \node at ( 7.5,0.725)      {$q_{1\,4}$};
    \node at ( 9.5,0.725)      {$q_{1\,5}$};
    \node at (11.5,0.725)      {$q_{1\,6}$};

    \node at ( 1.5,2.725)      {$q_{2\,1}$};
    \node at ( 3.5,2.725)      {$q_{2\,2}$};
    \node at ( 5.5,2.725)      {$q_{2\,3}$};
    \node at ( 7.5,2.725)      {$q_{2\,4}$};
    \node at ( 9.5,2.725)      {$q_{2\,5}$};
    \node at (11.5,2.725)      {$q_{2\,6}$};

    \end{tikzpicture}
    \caption{The rectangle of size $6\,\times\,2$ along with a collection of points arising as the preimages of some point $q\in\mathbb T$.}
    \label{fig:hypcasebase}
\end{figure}

\noindent Notice that there is an involution $\tau$ that swaps the two rows and the $j-$th column with the $(g-j)-$th column. Additionally, if $g$ is odd, the middle column is mapped to itself. It is easy to verify that there are four fixed points. We now perform some surgeries to produce a structure with the desired properties. In the first place, join the adjacent points $q_{1\,j}$ and $q_{1\,j+1}$ with an edge $e_{1\,j}$ for all $j=1,\dots,g-1$. In the same fashion, join the adjacent $q_{2\,j}$ and  $q_{2\,j+1}$ with an edge $e_{2\,g-j}$ for all $j=1,\dots,g-1$; see Figure \ref{fig:hypcasebase2}.

\begin{figure}[h!]
    \centering
    \begin{tikzpicture}[scale=0.875, every node/.style={scale=0.875}]
    \definecolor{pallido}{RGB}{221,227,227}

    \pattern [pattern=north west lines, pattern color=pallido]
    (0,0)--(12,0)--(12,4)--(0,4)--(0,0);
    
    \draw[thin, black] (0,0)--(12,0);
    \draw[thin, black] (0,2)--(12,2);
    \draw[thin, black] (0,4)--(12,4);
    \foreach \x in {0, 2, 4, 6, 8, 10, 12} {
        \draw (\x, 0) -- (\x, 4);
        }

    \draw[thin, orange] ( 1.5,1)--(11.5,1);
    \draw[thin, orange] ( 1.5,3)--(11.5,3);

    \foreach \x in {0, 2, 4, 6, 8, 10} {
        \node[blue] at (\x+1.5, 1) {$\bullet$};
        \node[blue] at (\x+1.5, 3) {$\bullet$};
        }

    \node at ( 2.5, 1.25)      {$e_{1\,1}$};
    \node at ( 4.5, 1.25)      {$e_{1\,2}$};
    \node at ( 6.5, 1.25)      {$e_{1\,3}$};
    \node at ( 8.5, 1.25)      {$e_{1\,4}$};
    \node at (10.5, 1.25)      {$e_{1\,5}$};

    \node at ( 2.5, 3.25)      {$e_{2\,5}$};
    \node at ( 4.5, 3.25)      {$e_{2\,4}$};
    \node at ( 6.5, 3.25)      {$e_{2\,3}$};
    \node at ( 8.5, 3.25)      {$e_{2\,2}$};
    \node at (10.5, 3.25)      {$e_{2\,1}$};

    \node at ( 1.5,0.725)      {$q_{1\,1}$};
    \node at ( 3.5,0.725)      {$q_{1\,2}$};
    \node at ( 5.5,0.725)      {$q_{1\,3}$};
    \node at ( 7.5,0.725)      {$q_{1\,4}$};
    \node at ( 9.5,0.725)      {$q_{1\,5}$};
    \node at (11.5,0.725)      {$q_{1\,6}$};

    \node at ( 1.5,2.725)      {$q_{2\,1}$};
    \node at ( 3.5,2.725)      {$q_{2\,2}$};
    \node at ( 5.5,2.725)      {$q_{2\,3}$};
    \node at ( 7.5,2.725)      {$q_{2\,4}$};
    \node at ( 9.5,2.725)      {$q_{2\,5}$};
    \node at (11.5,2.725)      {$q_{2\,6}$};
    
    \end{tikzpicture}
    \caption{The rectangle of size $6\,\times\,2$ along with a collection of points arising as the preimages of some point $q\in\mathbb T$. Adjacent points are joint with an edge $e_{i\,j}$.}
    \label{fig:hypcasebase2}
\end{figure}
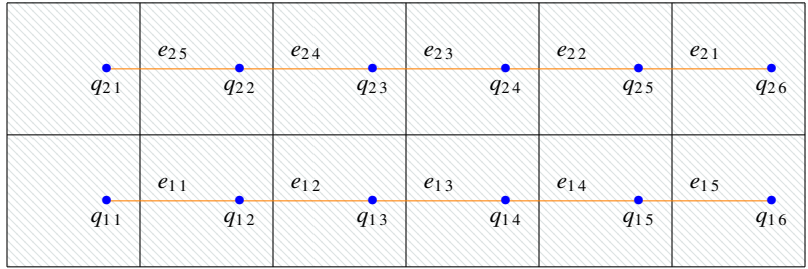

\noindent We then slit all edges $e_{i\,j}$ and denote the resulting sides as $e_{i\,j}^{\pm}$ according to our convention -- without loss of generality we can assume that all edges are oriented from the left to the right. Re-glue them as follows: $e_{1\,j}^+$ is identified with $e_{2\,j}^-$ and, similarly, $e_{1\,j}^-$ is identified with $e_{2\,j}^+$. The resulting structure is a translation surface with signature $(g-1,g-1)$; see Figure \ref{fig:hypcasebase3} -- hence, it has genus $g$ -- and its absolute and relative periods all belong to $\ziz$ as desired. 

\begin{figure}[h!]
    \centering
    \begin{tikzpicture}[scale=0.875, every node/.style={scale=0.875}]
    \definecolor{pallido}{RGB}{221,227,227}

    \pattern [pattern=north west lines, pattern color=pallido]
    (0,0)--(12,0)--(12,4)--(0,4)--(0,0);
    
    \draw[thin, black] (0,0)--(12,0);
    \draw[thin, black] (0,4)--(12,4);
    \foreach \x in {0, 12} {
        \draw (\x, 0) -- (\x, 4);
        }

    \foreach \x in {0, 2, 4, 6, 8, 10} {

    \draw[thin, black] ( \x+1.5,1) arc [start angle = 0, end angle =360,radius = 0.5];
    \draw[thin, black] ( \x+1.5,3) arc [start angle = 0, end angle =360,radius = 0.5];
    }

    \foreach \x in {0, 4, 8} {

    \draw [fill=orange, opacity=0.25] ( \x+1,1) circle (0.5);
    \draw [fill=orange, opacity=0.25] ( \x+1,3) circle (0.5);
    }

    \foreach \x in {0, 4, 8} {

    \draw [fill=yellow, opacity=0.25] ( \x+3,1) circle (0.5);
    \draw [fill=yellow, opacity=0.25] ( \x+3,3) circle (0.5);
    }
    
    \draw[thin, orange] ( 1,1)--(11,1);
    \draw[thin, orange] ( 1,3)--(11,3);

    \foreach \x in {0, 2, 4, 6, 8, 10} {
        \node[blue] at (\x+1, 1) {$\bullet$};
        \node[blue] at (\x+1, 3) {$\bullet$};
        }

    \node at ( 2, 1.25)      {$e_{1\,1}$};
    \node at ( 4, 1.25)      {$e_{1\,2}$};
    \node at ( 6, 1.25)      {$e_{1\,3}$};
    \node at ( 8, 1.25)      {$e_{1\,4}$};
    \node at (10, 1.25)      {$e_{1\,5}$};

    \node at ( 2, 3.25)      {$e_{2\,5}$};
    \node at ( 4, 3.25)      {$e_{2\,4}$};
    \node at ( 6, 3.25)      {$e_{2\,3}$};
    \node at ( 8, 3.25)      {$e_{2\,2}$};
    \node at (10, 3.25)      {$e_{2\,1}$};

    \end{tikzpicture}
    \caption{We show which points are identified once the cut and paste construction is performed. }
    \label{fig:hypcasebase3}
\end{figure}
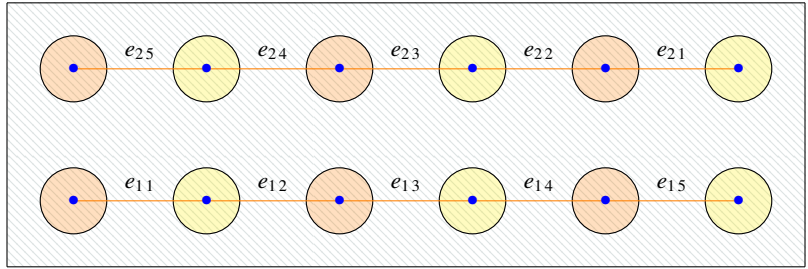

\noindent By construction, there is a clear symmetry obtained by rotating the slit rectangle $\mathcal R$ by the angle $\pi$ around some point, say $c$, in its interior (such a point is the center of $\mathcal R$ if and only if $q_{i\,j}$ is the center of the square to which it belongs). Such a symmetry determines an involution with $2g+2$ fixed points listed below: 
\begin{itemize}
    \item[1.] Four out of $2g+2$ of these points come from the original base rectangle $\mathcal R$ and these are the vertices of some inner rectangle of volume $g/2$. 
    \smallskip
    \item[2.] By rotating $\mathcal R$ around $c$, the mid-point of $e_{1\,j}$ is mapped to that of $e_{2\,g-j}$. In particular, the mid-point of $e_{1\,j}^{\pm}$ is mapped to that of $e_{2\,g-j}^{\mp}$. Once these edges are identified as described above, the resulting structure naturally has $2g-2$ additional fixed points. 
    \end{itemize}
Thus the resulting structure is hyperelliptic as desired; see Figure \ref{fig:hypcasebase4}. We observe, finally, that the construction just described relies on the fact that the base rectangle has an even number of square tiles, where the minimum required degree is \(2g\). If we did not require the relative periods to be integral, then the minimum degree needed to realize such a structure (with the same signature and hyperelliptic structure) would be halved, \textit{i.e.}, \(g\). In \cite[Section \S3.2.1]{BJJP}, the authors provided a different construction that could also have been applied to our case. However, in order to keep our work self-contained, we prefer to provide our own explicit construction.

\begin{figure}[h!]
    \centering
    \begin{tikzpicture}[scale=0.875, every node/.style={scale=0.875}]
    \definecolor{pallido}{RGB}{221,227,227}

    \pattern [pattern=north west lines, pattern color=pallido]
    (0,0)--(12,0)--(12,4)--(0,4)--(0,0);
    
    \draw[thin, black] (0,0)--(12,0);
    \draw[thin, black] (0,4)--(12,4);
    \foreach \x in {0, 12} {
        \draw (\x, 0) -- (\x, 4);
        }

    \draw[thin, black] ( 1.5,1.5)--(11.5,1.5);
    \draw[thin, black] ( 1.5,3.5)--(11.5,3.5);

    \draw[thin, dotted, black] ( 0.5,0.5)--(6.5,0.5);
    \draw[thin, dotted, black] ( 0.5,2.5)--(6.5,2.5);
    \draw[thin, dotted, black] ( 0.5,0.5)--(0.5,2.5);
    \draw[thin, dotted, black] ( 6.5,0.5)--(6.5,2.5);

    \foreach \x in {0, 2, 4, 6, 8, 10} {
        \node[blue] at (\x+1.5, 1.5) {$\bullet$};
        \node[blue] at (\x+1.5, 3.5) {$\bullet$};
        }

    \node[violet] at ( 0.5, 2.5)   {$\times$};
    \node[violet] at ( 6.5, 2.5)   {$\times$};
    \node[violet] at ( 0.5, 0.5)   {$\times$};
    \node[violet] at ( 6.5, 0.5)   {$\times$};
    
    \node[red] at ( 2.5, 1.5)      {$\times$};
    \node[red] at ( 4.5, 1.5)      {$\times$};
    \node[red] at ( 6.5, 1.5)      {$\times$};
    \node[red] at ( 8.5, 1.5)      {$\times$};
    \node[red] at (10.5, 1.5)      {$\times$};

    \node[red] at ( 2.5, 3.5)      {$\times$};
    \node[red] at ( 4.5, 3.5)      {$\times$};
    \node[red] at ( 6.5, 3.5)      {$\times$};
    \node[red] at ( 8.5, 3.5)      {$\times$};
    \node[red] at (10.5, 3.5)      {$\times$};

    \node at (11.5,0.5)        {$\mathcal R$};

    \end{tikzpicture}
    \caption{Fixed points of the hyperelliptic involution. The labels of points and edges have been removed. For those, the reader can refer to Figure \ref{fig:hypcasebase2}. For $g=6$, there are $2g+2=14$ fixed points. In violet those fixed points come from the base rectangle $\mathcal R$. The dotted lines show the inner rectangle of volume $g/2$. In red we denote the midpoints of the edges $e_{i\,j}$.}
    \label{fig:hypcasebase4}
\end{figure}
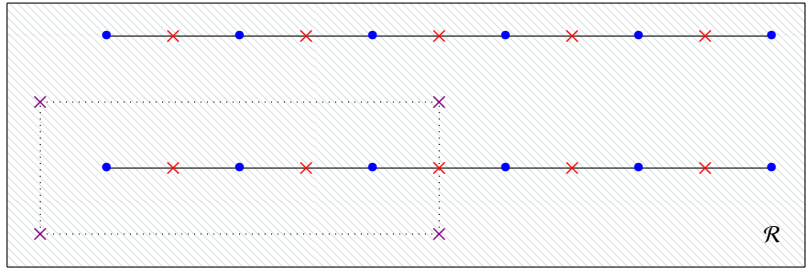

\subsubsection{The non-hyperelliptic component}\label{sssec:nothypcomp} The representation $\chi$ can be also realized in the non-hyperelliptic component of $\mathcal{H}(g-1,g-1)$. 
Let $\mathcal R$ be a rectangle of size $g\,\times\,2$. By gluing the opposite sides via translation, we obtain a flat torus $T$ of volume $2g$ that comes with a genuine covering map $\pi\colon T\longrightarrow \mathbb T$ of degree $2g$ by construction. Let $q\in\mathbb T$ be any point and let $Q$ denote the set $\{\,q_{1\,1},\dots,q_{1\,2g},q_{2\,1},\dots,q_{2\,2g}\,\}$ of preimages of $q$ via the covering $\pi$ and their labels are as in \S\ref{sssec:hypercomp}. In this case, we join $q_{1\,j}$ and $q_{2\,j}$ with an edge, say $e_j$, as shown in Figure \ref{fig:hypcasebase5}.

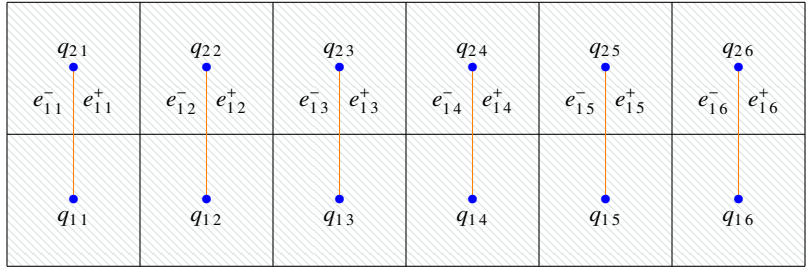
\begin{figure}[h!]
    \centering
    \begin{tikzpicture}[scale=0.875, every node/.style={scale=0.875}]
    \definecolor{pallido}{RGB}{221,227,227}

    \pattern [pattern=north west lines, pattern color=pallido]
    (0,0)--(12,0)--(12,4)--(0,4)--(0,0);
    
    \draw[thin, black] (0,0)--(12,0);
    \draw[thin, black] (0,2)--(12,2);
    \draw[thin, black] (0,4)--(12,4);
    \foreach \x in {0, 2, 4, 6, 8, 10, 12} {
        \draw (\x, 0) -- (\x, 4);
        }

    \foreach \x in {0, 2, 4, 6, 8, 10} {
        \draw[thin, orange] ( \x+1,1)--(\x+1,3);
        }

    \foreach \x in {0, 2, 4, 6, 8, 10} {
        \node[blue] at (\x+1, 1) {$\bullet$};
        \node[blue] at (\x+1, 3) {$\bullet$};
        }


    \node at ( 1.375, 2.5)      {$e_{1\,1}^+$};
    \node at ( 3.375, 2.5)      {$e_{1\,2}^+$};
    \node at ( 5.375, 2.5)      {$e_{1\,3}^+$};
    \node at ( 7.375, 2.5)      {$e_{1\,4}^+$};
    \node at ( 9.375, 2.5)      {$e_{1\,5}^+$};
    \node at (11.375, 2.5)      {$e_{1\,6}^+$};

    \node at ( 0.625, 2.5)      {$e_{1\,1}^-$};
    \node at ( 2.625, 2.5)      {$e_{1\,2}^-$};
    \node at ( 4.625, 2.5)      {$e_{1\,3}^-$};
    \node at ( 6.625, 2.5)      {$e_{1\,4}^-$};
    \node at ( 8.625, 2.5)      {$e_{1\,5}^-$};
    \node at (10.625, 2.5)      {$e_{1\,6}^-$};

    \node at ( 1,0.725)      {$q_{1\,1}$};
    \node at ( 3,0.725)      {$q_{1\,2}$};
    \node at ( 5,0.725)      {$q_{1\,3}$};
    \node at ( 7,0.725)      {$q_{1\,4}$};
    \node at ( 9,0.725)      {$q_{1\,5}$};
    \node at (11,0.725)      {$q_{1\,6}$};

    \node at ( 1,3.275)      {$q_{2\,1}$};
    \node at ( 3,3.275)      {$q_{2\,2}$};
    \node at ( 5,3.275)      {$q_{2\,3}$};
    \node at ( 7,3.275)      {$q_{2\,4}$};
    \node at ( 9,3.275)      {$q_{2\,5}$};
    \node at (11,3.275)      {$q_{2\,6}$};
    
    \end{tikzpicture}
    \caption{Realization in the non-hyperelliptic component.}
    \label{fig:hypcasebase5}
\end{figure}

\noindent We then slit all edges $e_j$ and denote the resulting sides with $e_j^{\pm}$ according to our convention --  here we can assume without loss of generality that all edges are oriented upwards. Then, we  glue $e_j^+$ with $e_{j+1}^-$ and $e_g^+$ with $e_1^-$. 

\smallskip

\noindent The resulting structure, say $(X,\omega)$, is a translation surface with signature \((\,g-1,\,g-1\,)\) and hence, it has genus \(g\). It remains to show that such a structure is not hyperelliptic for every \(g\ge3\). We argue by contradiction and suppose there exists a hyperelliptic involution $\tau$. By construction, there are \(g\) vertical strips, say $S_i$, each of length \(2\). Notice that each such a strip corresponds to a slit torus. Thus \((X,\omega)\) can be seen as the structure obtained from a collection of \(g\) slit torus glued along slits, say \(s_1,\dots,s_g\), of unit length; see Figure \ref{fig:hypcasebase6}. In principle, two vertical strips may be swapped by \(\tau\). We claim that this cannot be the case. In fact, if \(\tau\) were to map a vertical strip \(S_i\) onto another, distinct, vertical strip \(S_j\), then \(X/\tau\) would contain \((S_i\,\cup\,S_j)/\tau\), which is an embedded one-holed torus, and hence the quotient cannot be a sphere. As a consequence, \(\tau\) must preserve all slit tori \(S_1,\dots,S_g\), and hence it must preserve all saddle connections \(s_i\) in the sense that \(\tau(\,s_i\,)=s_i\). Therefore, the mid-point of each saddle connection must be a fixed point. Notice that there are \(g\) of such points. The desired contradiction directly follows, because all of these points belong to the unique horizontal cylinder, say \(C\), of length \(2g\) which must be fixed by \(\tau\), and hence it must contain only two fixed points.

\begin{figure}[h!]
    \centering
    \begin{tikzpicture}[scale=0.875, every node/.style={scale=0.875}]
    \definecolor{pallido}{RGB}{221,227,227}

    \pattern [pattern=north west lines, pattern color=pallido]
    (0,0)--(12,0)--(12,4)--(0,4)--(0,0);
    
    \draw[thin, black] (0,0)--(12,0);
    \draw[thin, dashed, black] (0,1)--(12,1);
    \draw[thin, dashed, black] (0,3)--(12,3);
    \draw[thin, black] (0,4)--(12,4);
    \foreach \x in {0,  12} {
        \draw (\x, 0) -- (\x, 4);
        }

    \foreach \x in {0,2,4,6,8,10} {

    \draw[thin, dashed, black] (\x+1,3)--(\x+1,4);
    \draw[thin, dashed, black] (\x+1,0)--(\x+1,1);
    }

    \draw [fill=blue, opacity=0.125] (3,0)--(5,0)--(5,4)--(3,4)--(3,0);

    \draw [fill=orange, opacity=0.25] (0,0)--(12,0)--(12,1)--(0,1)--(0,0);
    \draw [fill=orange, opacity=0.25] (0,3)--(12,3)--(12,4)--(0,4)--(0,3);

    \foreach \x in {0, 2, 4, 6, 8, 10} {
        \draw[thin, orange] ( \x+1,1)--(\x+1,3);
        }

    \foreach \x in {0, 2, 4, 6, 8, 10} {
        \node[blue] at (\x+1, 1) {$\bullet$};
        \node[blue] at (\x+1, 3) {$\bullet$};
        }

    \node at (11.5,0.5) {$C$};
    \node at ( 4  ,0.5) {$S_3$};

    \node at ( 0.625, 3.5)      {$s_1$};
    \node at ( 2.625, 3.5)      {$s_2$};
    \node at ( 4.625, 3.5)      {$s_3$};
    \node at ( 6.625, 3.5)      {$s_4$};
    \node at ( 8.625, 3.5)      {$s_5$};
    \node at (10.625, 3.5)      {$s_6$};

    \end{tikzpicture}
    \caption{In orange is depicted the cylinder of length $2g$. The interior of its complement is a collection of $g$ cylinders of unit length. The vertical strip in blue shows a slit torus whose slits correspond to the dashed lines.}
    \label{fig:hypcasebase6}
\end{figure}
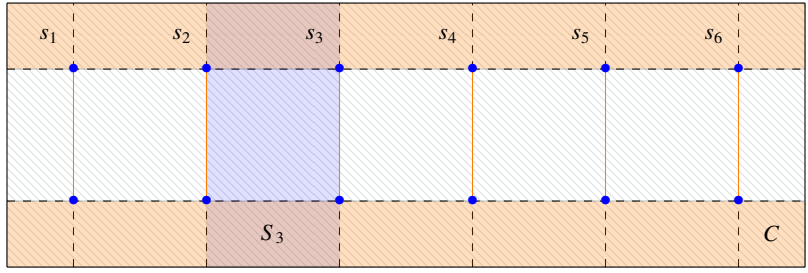

\subsubsection{Higher degree}\label{sssec:hdhyp} All square-tiled surfaces above come with a branched covering $\pi\colon X\longrightarrow \mathbb T$ of minimal degree $2g$. In order to produce covering maps of degree $d>2g$, we can modify slightly our previous constructions to add additional $d-2g$ unit squares. In both of the constructions mentioned above, it is possible to find simple closed geodesics such that, by cutting the surface along them, the resulting surface has two boundary components to which we can attach a cylinder of arbitrary length. 

\begin{figure}[h!]
    \centering
    \begin{tikzpicture}[scale=0.85, every node/.style={scale=0.75}]
    \definecolor{pallido}{RGB}{221,227,227}

    \pattern [pattern=north west lines, pattern color=pallido]
    (0,0)--(12,0)--(12,4)--(0,4)--(0,0);

    \pattern [pattern=north west lines, pattern color=pallido]
    (1,-1)--(5.75,-1)--(5.75,-3)--(1,-3)--(1,-1);

    \pattern [pattern=north west lines, pattern color=pallido]
    (11,-1)--(8.25,-1)--(8.25,-3)--(11,-3)--(11,-1);

    \draw[thin, black] (0,0)--(12,0);
    \draw[thin, black] (0,4)--(12,4);

    \draw[thin, black] (1,-1)--(5.25,-1);
    \draw[thin, black, dashed] (5.25,-1)--(5.75,-1);
    \draw[thin, black] (1,-3)--(5.25,-3);
    \draw[thin, black, dashed] (5.25,-3)--(5.75,-3);

    \draw[thin, black] (8.75,-1)--(11,-1);
    \draw[thin, black, dashed] (8.25,-1)--(8.75,-1);
    \draw[thin, black] (8.75,-3)--(11,-3);
    \draw[thin, black, dashed] (8.25,-3)--(8.75,-3);

    \foreach \x in {0,12} {
        \draw[thin, black] (\x, 0) -- (\x, 4);
        }

    \draw[thin, violet] (1, -1) -- (1, -3);
    \foreach \x in {4,6, 10} {
        \draw[thin, black] (\x-1, -1) -- (\x-1, -3);
        }
    \draw[thin, violet] (11, -1) -- (11, -3);
    
    \draw[thin, violet] (6,1)--(6,3);

    \draw[thin, orange] ( 1,1)--(11,1);
    \draw[thin, orange] ( 1,3)--(11,3);

    \foreach \x in {0, 2, 4, 6, 8, 10} {
        \node[blue] at (\x+1, 1) {$\bullet$};
        \node[blue] at (\x+1, 3) {$\bullet$};
        }

    \node at (11.5,0.5)      {$\mathcal R$};

    \node at ( 2, 0.75)      {$e_{1\,1}$};
    \node at ( 4, 0.75)      {$e_{1\,2}$};
    \node at ( 6, 0.75)      {$e_{1\,3}$};
    \node at ( 8, 0.75)      {$e_{1\,4}$};
    \node at (10, 0.75)      {$e_{1\,5}$};

    \node at ( 2, 3.25)      {$e_{2\,5}$};
    \node at ( 4, 3.25)      {$e_{2\,4}$};
    \node at ( 6, 3.25)      {$e_{2\,3}$};
    \node at ( 8, 3.25)      {$e_{2\,2}$};
    \node at (10, 3.25)      {$e_{2\,1}$};

    \node at (7,-2) {$\cdots\,\cdots$};

    \node at ( 0.75,-2) {$t^+$};
    \node at (11.25,-2) {$t^-$};
    \node at ( 5.75, 2) {$s$};
    \node at ( 2,   -2) {$2g+1$};
    \node at ( 4,   -2) {$2g+2$};
    \node at (10,   -2) {$d$};
    
    \end{tikzpicture}
    \caption{Realization of a hyperelliptic translation surface with signature \((g-1, g-1)\) and prescribed absolute and relative periods with \(g\) even. We slit along the simple closed curve \(s\), drawn in violet, and we call the resulting sides $s^{\pm}$ according to our convention. Then, we glue the strip \(S\) by identifying the horizontal sides via translation and then glue $s^{\pm}$ with $t^{\mp}$. The resulting structure is the desired one. A similar construction works for odd values of \(g\) as well.} 
    \label{fig:hypcasebase7}
\end{figure}
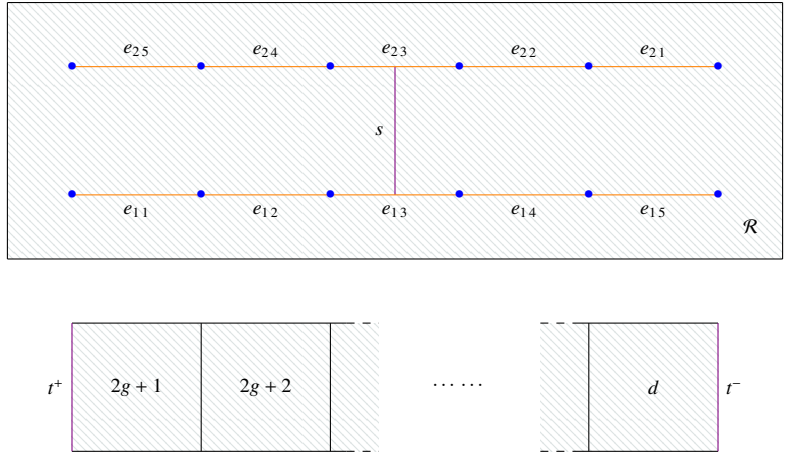

\noindent If the cylinder consists of \(d-2g\) unit squares, the desired surface is a square-tiled surface and comes with a branched covering of degree \(d\) over the standard torus. We provide two more pictures to demonstrate the idea without going through the details; see Figures \ref{fig:hypcasebase7} and \ref{fig:hypcasebase8}. 

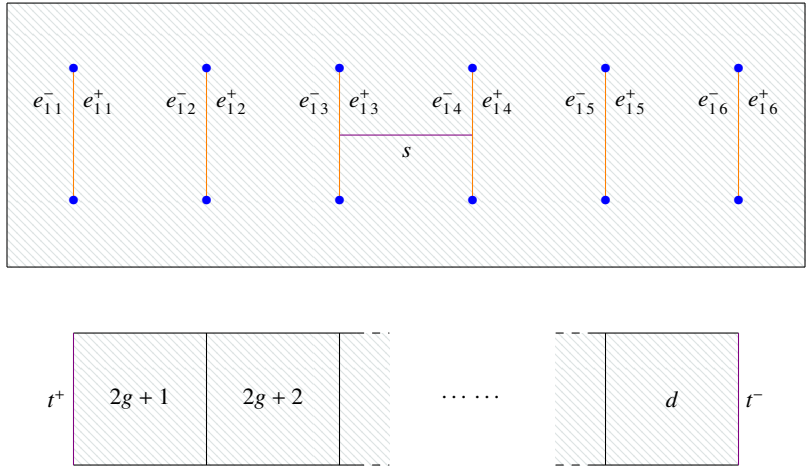
\begin{figure}[h!]
    \centering
    \begin{tikzpicture}[scale=0.875, every node/.style={scale=0.875}]
    \definecolor{pallido}{RGB}{221,227,227}

    \pattern [pattern=north west lines, pattern color=pallido]
    (0,0)--(12,0)--(12,4)--(0,4)--(0,0);

    \pattern [pattern=north west lines, pattern color=pallido]
    (1,-1)--(5.75,-1)--(5.75,-3)--(1,-3)--(1,-1);

    \pattern [pattern=north west lines, pattern color=pallido]
    (11,-1)--(8.25,-1)--(8.25,-3)--(11,-3)--(11,-1);
    
    \draw[thin, black] (0,0)--(12,0);
    \draw[thin, black] (0,4)--(12,4);
    \foreach \x in {0,12} {
        \draw (\x, 0) -- (\x, 4);
        }

    \draw[thin, black] (1,-1)--(5.25,-1);
    \draw[thin, black, dashed] (5.25,-1)--(5.75,-1);
    \draw[thin, black] (1,-3)--(5.25,-3);
    \draw[thin, black, dashed] (5.25,-3)--(5.75,-3);

    \draw[thin, black] (8.75,-1)--(11,-1);
    \draw[thin, black, dashed] (8.25,-1)--(8.75,-1);
    \draw[thin, black] (8.75,-3)--(11,-3);
    \draw[thin, black, dashed] (8.25,-3)--(8.75,-3);

    \draw[thin, violet] (1, -1) -- (1, -3);
    \foreach \x in {4,6, 10} {
        \draw[thin, black] (\x-1, -1) -- (\x-1, -3);
        }
    \draw[thin, violet] (11, -1) -- (11, -3);
    
    \draw[thin, violet] (5,2)--(7,2);
    
    \foreach \x in {0,2,4,6,8, 10} {
        \draw[thin, orange] ( \x+1,1)--(\x+1,3);
        }

    \foreach \x in {0, 2, 4, 6, 8, 10} {
        \node[blue] at (\x+1, 1) {$\bullet$};
        \node[blue] at (\x+1, 3) {$\bullet$};
        }

    \node at (7,-2) {$\cdots\,\cdots$};
      
    \node at ( 1.375, 2.5)      {$e_{1\,1}^+$};
    \node at ( 3.375, 2.5)      {$e_{1\,2}^+$};
    \node at ( 5.375, 2.5)      {$e_{1\,3}^+$};
    \node at ( 7.375, 2.5)      {$e_{1\,4}^+$};
    \node at ( 9.375, 2.5)      {$e_{1\,5}^+$};
    \node at (11.375, 2.5)      {$e_{1\,6}^+$};

    \node at ( 0.625, 2.5)      {$e_{1\,1}^-$};
    \node at ( 2.625, 2.5)      {$e_{1\,2}^-$};
    \node at ( 4.625, 2.5)      {$e_{1\,3}^-$};
    \node at ( 6.625, 2.5)      {$e_{1\,4}^-$};
    \node at ( 8.625, 2.5)      {$e_{1\,5}^-$};
    \node at (10.625, 2.5)      {$e_{1\,6}^-$};

    \node at ( 0.75,-2) {$t^+$};
    \node at (11.25,-2) {$t^-$};
    \node at ( 6, 1.75) {$s$};
    \node at ( 2,   -2) {$2g+1$};
    \node at ( 4,   -2) {$2g+2$};
    \node at (10,   -2) {$d$};
    
    \end{tikzpicture}
    \caption{Realization of a non-hyperelliptic translation surface with $d>2g$ unit squares. The additional \(d-2g\) squares are drawn horizontally to simplify the picture. The same idea as described in the caption of Figure \ref{fig:hypcasebase7} can be applied here \textit{mutatis mutandis}.}
    \label{fig:hypcasebase8}
\end{figure}


\medskip

\subsection{Realization in the even and odd components of strata}\label{ssec:lastcasespintwo}


\noindent We finally develop a construction to realize square-tiled surfaces as in Definition \ref{defn:transurf} with prescribed branching data and parity. 

\smallskip

\noindent Let $\mu=(\,\mu_o,\dots,\mu_\ell\,)$ be a partitioned signature. At least one of these sub-signatures realizes ${\rm m}(\,\chi\,,\,\mu^\vdash\,)
$, that is the lowest possible degree for a branched covering $\pi\colon X\longrightarrow \mathbb T$ such that $\omega=\pi^*dz$ has signature $\mu$ and branching data prescribed according to $\mu^\vdash$. As a consequence,  $$d=\deg(\,\pi\,)\ge\,|\,\mu_i\,|+|\,Z_i\,|$$ 
for all $i=0,\dots,\ell$. Let $\nu=(2m_1,\dots,2m_h)$ be any one of $\mu_o,\dots,\mu_\ell$. Then, the inequality above can be written as 
\begin{equation}\label{eq:subsigncond}
    \sum_{i=1}^h (2m_i+1) \le d\quad \text{ that is } \quad \sum_{i=1}^h 2m_i \,\le\,d-h.
\end{equation}
Thus, there are finitely many cases for an even signature $\nu$ to appear as a sub-signature of $\mu$. We further notice that, since $2m_i\ge2$ for all $i=1,\dots,h$, it follows that $3h\le d$, which provides an upper bound on the possible lengths allowed for a sub-partition.

\begin{ex}
    As an example, we show the possible signatures in the case of $d=11$ in Table~\ref{tab:subsign}. Notice that $h\le3$ in this case.
    \begin{table}[h!]
        \centering
        \begin{tabular}{ccc}
        \toprule
            $h=1$  & $h=2$ & $h=3$\\
        \midrule
            $(\,2\,)$  & $(\,2\,\,2\,)$ & $(\,2\,\,2\,\,2\,)$\\
            $(\,4\,)$  & $(\,2\,\,4\,)$ & $(\,2\,\,2\,\,4\,)$\\
            $(\,6\,)$  & $(\,2\,\,6\,)$ & \\
            $(\,8\,)$  & $(\,4\,\,4\,)$ & \\
            $(\,10\,)$ &          & \\
        \bottomrule
        \end{tabular}
        \caption{Possible sub-signatures with $d=11$.}
        \label{tab:subsign}
    \end{table}
\end{ex}

\smallskip

\begin{rmk}
    If we relax the assumption on the minimal degree in Proposition \ref{prop:branchcoverings}, \textit{i.e.}, if we do not impose any conditions on the number of squares required to realize the desired structure, the final case can be easily achieved by constructing an origami in the given stratum. Then, by applying the zero-movement technique described in Section \S\ref{sssec:schiffer} and using the same approach developed in Section \S\ref{ssec:nointrelper}, we can achieve the desired result. In the following, we present an alternative argument to cover the missing cases. Since this argument is applicable for all degrees, we provide the general description.
\end{rmk}

\noindent The gist of the idea for partitioned signatures is as follows. We first realize an origami $(X,\omega)$, and then we realize the desired sub-signatures by bubbling handles by means of slits. The choice of $(X, \omega)$ depends on the parity that we aim to realize. More precisely, we distinguish two possible cases ${\rm O}$ and ${\rm E}$ as follows:
\begin{itemize}
    \item[O] we choose $(X,\omega)$ as the torus obtained by lining up $d$ unit squares, if the desired parity is odd. Notice that a torus always has odd spin parity. Otherwise, 
    \smallskip
    \item[E] we choose $(X, \omega)$ as an origami of genus three with even parity. In this case, there are two possible strata with even parity: $\mathcal{H}_3(2,2)$ and $\mathcal{H}_3(\,4\,)$. In what follows, we primarily focus on origamis from the former stratum. 
\end{itemize}

\noindent In the following subsections, we handle these cases separately.

\subsubsection{Case ${\rm O}$.}\label{sssec:oddcase} This is an easier case to handle, as no exceptional or special cases arise, allowing us to use a uniform argument. Let $\mu=(\mu_o,\dots,\mu_\ell)$ be a signature and let $d={\rm m}(\,\chi\,,\,\mu^\vdash\,)$. Let $\mathcal R$ be the rectangle obtained by lining up $d$ unit squares. By identifying the opposite sides as usual by means of translation, we obtain a translation surface, say $(X,\omega)$, of volume $d$ homeomorphic to a torus. There is a natural covering map $\pi\colon X\longrightarrow \mathbb T$ of degree $d$. Keeping Remark \ref{rmk:relperint} in mind, recall that we can regard relative periods as elements of $\mathbb{T}$. Let $\mu$ be partitioned as $(\mu_0, \dots, \mu_\ell)$, and let $r_1, \dots, r_\ell \in \mathbb{T}$ be the desired relative periods. Upon choosing a base point $y_o \in \mathbb{T}$, define $y_i = y_o + r_i$. Note that each of these points, being a regular value of $\pi$, has exactly $d$ preimages — one for each square of $(X, \omega)$. 

\smallskip

\noindent Let $Q_o=\{\,q_{o1},\dots,q_{od}\,\}$ be the preimages of $y_o$. This forms a collection of marked points on $(X, \omega)$, and by connecting them appropriately with segments — where the way we join these segments is encoded in the sub-partition — we can perform a slit construction to bubble a handle; see Figure \ref{fig:oddspintransurf}.

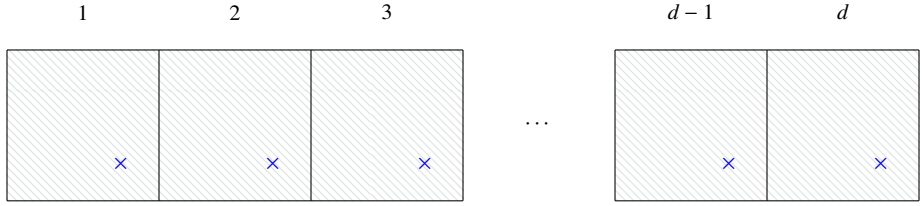
\begin{figure}[h!]
    \centering
    \begin{tikzpicture}[scale=1, every node/.style={scale=0.825}]
    \definecolor{pallido}{RGB}{221,227,227}

    \pattern [pattern=north west lines, pattern color=pallido]
    (0,0)--(6,0)--(6,2)--(0,2)--(0,0);

    \pattern [pattern=north west lines, pattern color=pallido]
    (8,0)--(12,0)--(12,2)--(8,2)--(8,0);
    
    \draw[thin, black] (0,0)--(6,0);
    \draw[thin, black] (0,2)--(6,2);
    \foreach \x in {0, 2, 4, 6} {
        \draw (\x, 0) -- (\x, 2);
        }

    \draw[thin, black] (8,0)--(12,0);
    \draw[thin, black] (8,2)--(12,2);
    \foreach \x in {8,10,12} {
        \draw (\x, 0) -- (\x, 2);
        }

    \node at ( 7,  1) {$\cdots$};
    \node at ( 1,2.5)      {$1$};
    \node at ( 3,2.5)      {$2$};
    \node at ( 5,2.5)      {$3$};
    \node at ( 9,2.5)    {$d-1$};
    \node at (11,2.5)      {$d$};

    \foreach \x in {0, 2, 4, 8, 10} {
        \node[blue] at (\x+1.5, 0.5) {$\times$};
        }
    
    \end{tikzpicture}
    \caption{Lined-up squares with marked points}
    \label{fig:oddspintransurf}
\end{figure}

\noindent Let $\mu_o=(\,2m_1,\dots,2m_{h_o}\,)$. We apply the following rule: Two points can be joined with an edge if they are adjacent in the sense that they belong to two adjacent squares. Notice that, as an artifact of our construction, the squares labeled with $1$ and $d$ are considered as adjacent -- this will be relevant in Section \S\ref{sssec:evencase} below. According to this rule, we join $q_{oi}$ and $q_{oi+1}$ with an edge labeled as $e_i$ for $i=1,\dots,2m_1$ and orient all edges from the left to right. Slit all edges and label the resulting sides according to our convention and then identify $e_{2i-1}^{\pm}$ with $e_{2i}^{\mp}$ for all $i=1,\dots,m_1$. The resulting structure is a square-tiled surface 
and, by construction, the corresponding abelian differential $\xi$ has a single zero of order $2m_1$ arising from  collapsing the points $q_{o1},\dots,q_{0,2m_1+1}$ of $(X,\omega)$. 

\smallskip

\noindent Next, we iterate the same construction: we do \textit{not} join $q_{0,2m_1+1}$ with $q_{0,2m_1+2}$ and we keep joining adjacent points for $i=2m_1+2,\dots, 2m_1+2m_2+2$. The essential idea relies on the ability to realize chains of edges consisting of $2m_i$ edges with at most $d-h_o$ gaps. Notice that the necessary condition \eqref{eq:subsigncond} ensures that this is always possible. In other words, every signature that satisfies \eqref{eq:subsigncond} can be realized; see Figure \ref{fig:oddspintransurf2}. If $\ell=0$, \textit{i.e.} $\mu$ is not partitioned, then the construction is complete, and the resulting translation surface is in fact an origami. It remains to verify that it has the desired parity, which we will address later. For now, let us focus on the case of partitioned signatures.

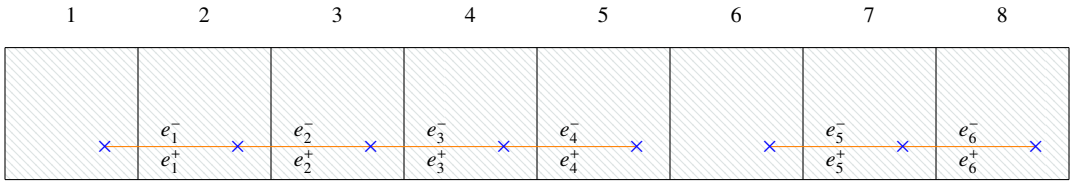
\begin{figure}[h!]
    \centering
    \begin{tikzpicture}[scale=0.875, every node/.style={scale=0.825}]
    \definecolor{pallido}{RGB}{221,227,227}

    \pattern [pattern=north west lines, pattern color=pallido]
    (0,0)--(16,0)--(16,2)--(0,2)--(0,0);
    
    \draw[thin, black] (0,0)--(16,0);
    \draw[thin, black] (0,2)--(16,2);
    \foreach \x in {0, 2, 4, 6, 8, 10, 12, 14, 16} {
        \draw (\x, 0) -- (\x, 2);
        }

    \node at ( 1,2.5)      {$1$};
    \node at ( 3,2.5)      {$2$};
    \node at ( 5,2.5)      {$3$};
    \node at ( 7,2.5)      {$4$};
    \node at ( 9,2.5)      {$5$};
    \node at (11,2.5)      {$6$};
    \node at (13,2.5)      {$7$};
    \node at (15,2.5)      {$8$};

    \draw[thin, orange] ( 1.5,0.5)--( 3.5,0.5);
    \draw[thin, orange] ( 3.5,0.5)--( 5.5,0.5);
    \draw[thin, orange] ( 5.5,0.5)--( 7.5,0.5);
    \draw[thin, orange] ( 7.5,0.5)--( 9.5,0.5);

    \draw[thin, orange] (11.5,0.5)--(13.5,0.5);
    \draw[thin, orange] (13.5,0.5)--(15.5,0.5);
    
    \foreach \x in {0, 2, 4, 6, 8, 10, 12, 14} {
        \node[blue] at (\x+1.5, 0.5) {$\times$};
        }

    \node at ( 2.5,0.725)      {$e_1^-$};
    \node at ( 2.5,0.275)      {$e_1^+$};
    \node at ( 4.5,0.725)      {$e_2^-$};
    \node at ( 4.5,0.275)      {$e_2^+$};
    \node at ( 6.5,0.725)      {$e_3^-$};
    \node at ( 6.5,0.275)      {$e_3^+$};
    \node at ( 8.5,0.725)      {$e_4^-$};
    \node at ( 8.5,0.275)      {$e_4^+$};
    \node at (12.5,0.725)      {$e_5^-$};
    \node at (12.5,0.275)      {$e_5^+$};
    \node at (14.5,0.725)      {$e_6^-$};
    \node at (14.5,0.275)      {$e_6^+$};

    \end{tikzpicture}
    \caption{Realization of a given sub-signature. In this picture is depicted how to realize the signature $(\,4,2\,)$. Notice that $d=8$ is the minimal degree that allows the realization of such a signature.}
    \label{fig:oddspintransurf2}
\end{figure}

\noindent We now assume that $\mu$ is a partitioned signature and we extend the process above to realize square-tiled surfaces. 
Let $y_1$ be the point such that $y_1=y_o+r_1$, where $r_1$ is the desired relative period. It has $d$ preimages via the mapping $\pi$, say $Q_1=\{\,q_{11},\dots,q_{1d}\,\}$, and we notice that $Q_o$ and $Q_1$ are disjoint subsets because $r_1\neq0$. Let $\mu_1=(2m_{h_o+1},\dots,2m_{h_o+h_1})$ be a signature of length $h_1$ such that \eqref{eq:subsigncond} is satisfied. Then, we can perform the same construction as above. In other words, we can find $h_1$ chains consisting of $2m_i$ edges with at most $d-h_1$ gaps, where $i=2m_{h_o+1},\dots,2m_{h_o+h_1}$; see Figure \ref{fig:oddspintransurf3}. By slitting and re-gluing the edges as shown above, we obtain the desired surface.

\begin{figure}[h!]
    \centering
    \begin{tikzpicture}[scale=0.875, every node/.style={scale=0.825}]
    \definecolor{pallido}{RGB}{221,227,227}

    \pattern [pattern=north west lines, pattern color=pallido]
    (0,0)--(16,0)--(16,2)--(0,2)--(0,0);
    
    \draw[thin, black] (0,0)--(16,0);
    \draw[thin, black] (0,2)--(16,2);
    \foreach \x in {0, 2, 4, 6, 8, 10, 12, 14, 16} {
        \draw (\x, 0) -- (\x, 2);
        }

    \node at ( 1,2.5)      {$1$};
    \node at ( 3,2.5)      {$2$};
    \node at ( 5,2.5)      {$3$};
    \node at ( 7,2.5)      {$4$};
    \node at ( 9,2.5)      {$5$};
    \node at (11,2.5)      {$6$};
    \node at (13,2.5)      {$7$};
    \node at (15,2.5)      {$8$};

    \draw[thin, orange] ( 1.5,0.5)--( 3.5,0.5);
    \draw[thin, orange] ( 3.5,0.5)--( 5.5,0.5);
    \draw[thin, orange] ( 5.5,0.5)--( 7.5,0.5);
    \draw[thin, orange] ( 7.5,0.5)--( 9.5,0.5);

    \draw[thin, orange] (11.5,0.5)--(13.5,0.5);
    \draw[thin, orange] (13.5,0.5)--(15.5,0.5);
    
    \draw[thin, red] ( 1,1.5)--( 3,1.5);
    \draw[thin, red] ( 3,1.5)--( 5,1.5);

    \draw[thin, red] ( 9,1.5)--(11,1.5);
    \draw[thin, red] (11,1.5)--(13,1.5);

    \foreach \x in {0, 2, 4, 6, 8, 10, 12, 14} {
        \node[blue] at (\x+1.5, 0.5) {$\times$};
        }

    \foreach \x in {0, 2, 4, 8, 10, 12} {
        \node[blue] at (\x+1, 1.5) {$\times$};
        }
    
    \end{tikzpicture}
    \caption{Realization of a given sub-signature. In this picture is depicted how to realize the signature $\mu=(\,\mu_o,\mu_1\,)$, where $\mu_o=(\,4,2\,)$ and $\mu_1=(\,2,2\,)$.}
    \label{fig:oddspintransurf3}
\end{figure}
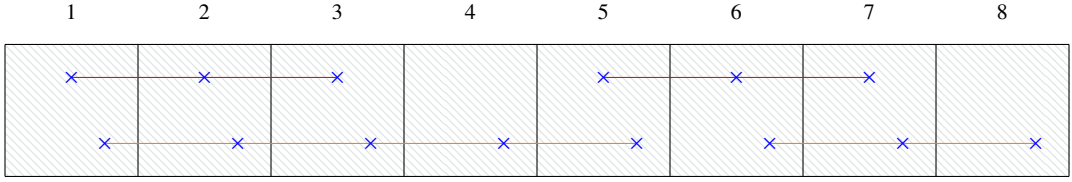

\noindent By using an inductive argument on $\ell$, it can be shown that every partitioned signature $\mu=(\mu_o,\dots,\mu_\ell)$ can be realized in this manner in $\ell$ steps, as illustrated in Figure \ref{fig:oddspintransurf4}.

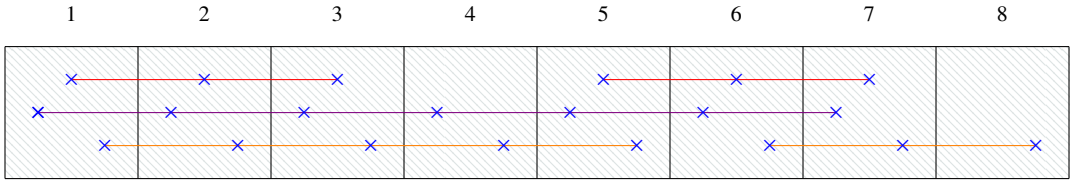
\begin{figure}[h!]
    \centering
    \begin{tikzpicture}[scale=0.875, every node/.style={scale=0.825}]
    \definecolor{pallido}{RGB}{221,227,227}

    \pattern [pattern=north west lines, pattern color=pallido]
    (0,0)--(16,0)--(16,2)--(0,2)--(0,0);
    
    \draw[thin, black] (0,0)--(16,0);
    \draw[thin, black] (0,2)--(16,2);
    \foreach \x in {0, 2, 4, 6, 8, 10, 12, 14, 16} {
        \draw (\x, 0) -- (\x, 2);
        }

    \node at ( 1,2.5)      {$1$};
    \node at ( 3,2.5)      {$2$};
    \node at ( 5,2.5)      {$3$};
    \node at ( 7,2.5)      {$4$};
    \node at ( 9,2.5)      {$5$};
    \node at (11,2.5)      {$6$};
    \node at (13,2.5)      {$7$};
    \node at (15,2.5)      {$8$};

    \draw[thin, orange] ( 1.5,0.5)--( 3.5,0.5);
    \draw[thin, orange] ( 3.5,0.5)--( 5.5,0.5);
    \draw[thin, orange] ( 5.5,0.5)--( 7.5,0.5);
    \draw[thin, orange] ( 7.5,0.5)--( 9.5,0.5);

    \draw[thin, orange] (11.5,0.5)--(13.5,0.5);
    \draw[thin, orange] (13.5,0.5)--(15.5,0.5);

    \draw[thin, red] ( 1,1.5)--( 3,1.5);
    \draw[thin, red] ( 3,1.5)--( 5,1.5);

    \draw[thin, red] ( 9,1.5)--(11,1.5);
    \draw[thin, red] (11,1.5)--(13,1.5);

    \draw[thin, violet] ( 0.5,1)--( 2.5,1);
    \draw[thin, violet] ( 2.5,1)--( 4.5,1);
    \draw[thin, violet] ( 4.5,1)--( 6.5,1);
    \draw[thin, violet] ( 6.5,1)--( 8.5,1);
    \draw[thin, violet] ( 8.5,1)--(10.5,1);
    \draw[thin, violet] (10.5,1)--(12.5,1);

        \foreach \x in {0, 2, 4, 6, 8, 10, 12, 14} {
        \node[blue] at (\x+1.5, 0.5) {$\times$};
        }

    \foreach \x in {0, 2, 4, 8, 10, 12} {
        \node[blue] at (\x+1, 1.5) {$\times$};
        }

    \foreach \x in {0, 2, 4, 6, 8, 10, 12,} {
        \node[blue] at (\x+0.5, 1) {$\times$};
        }

    \end{tikzpicture}
    \caption{Realization of a given sub-signature. In this picture is depicted how to realize the signature $\mu=(\,\mu_o,\mu_1,\mu_2\,)$, where $\mu_o=(\,4,2\,)$, $\mu_1=(\,2,2\,)$, and $\mu_2=(\,6\,)$. Notice that if $d=8$, then no triple sub-signatures with all even entries can be realized.}
    \label{fig:oddspintransurf4}
\end{figure}

\noindent The proof in this case is almost complete, and only two observations remain to be addressed. In the first place, we notice that at each step two different chains cannot overlap, because \eqref{eq:subsigncond} ensures that we have enough squares to make all of them disjoint. However, it is possible that two chains arising from different steps of the inductive foundation overlap. In fact, despite two subsets $Q_i$ and $Q_j$ are always disjoint, unless $i=j$, it can happen that the segment joining $y_i$ and $y_j$ is parallel to the existing chains. This issue can be simply avoided by taking the edges of all chains with a different rational slope. 

\smallskip

\noindent We finally verify that all translation surfaces that thus arise have odd spin parity. As a matter of our construction, for each handle we bubble step by step, there are two handle generators both of index one. As a direct consequence, the resulting structure has odd spin parity, because $(X,\omega)$, being a flat torus, has odd parity.

\smallskip

\subsubsection{Case ${\rm E}$.}\label{sssec:evencase} It remains to realize translation surfaces with even parity. Unlike the previous case, we cannot start from a flat torus or a surface of genus two, since there are no translation surfaces with even parity in these cases. Therefore, it is necessary to begin with a surface of genus three as the starting point for our construction. As in the odd case, let $\mu=(\mu_o,\dots,\mu_\ell)$ be a signature and let $d$ be the minimal degree allowed by $\mu$. Two remarks are in order.

\begin{rmk}
    Recall that if $\mu = (2m_1, \dots, 2m_k)$, then $g + 1 = m_1 + \cdots + m_k$. Applying the necessary condition \eqref{eq:subsigncond} to $\mu$, along with the assumption $g \geq 3$, it follows that $d \geq 4 + k \geq 5$. Moreover, we assume that $\mu$ has length at least two, since a translation surface with only one zero has no relative periods, and there would be nothing to prove. Therefore, with $k \geq 2$, we can further assume $d \geq 6$.
\end{rmk}

\begin{rmk}\label{rmk:stsperm}
    Every square-tiled surface is uniquely determined by a collection of $N$ unit squares and by a pair of permutations $(h,\,v)\in\mathfrak S_N\times\mathfrak S_N$ acting transitively on the set $\{\,1,\dots,N\,\}$. In fact, one can always label the squares from $1$ to $N$, and declare that $h(i)$, respectively $v(i)$, is the number of the neighbor to the right, respectively on the top, of the square $i$. The condition that $h$ and $v$ act transitively on $\{\,1,\dots,N\,\}$ is equivalent to the connectedness of the corresponding origami.
\end{rmk}

\noindent By making use of Remark \ref{rmk:stsperm}, for every $d\ge6$, we define $(X_d,\omega_d)$ as the square-tiled surface arising from the following permutations
\begin{equation}
    h\,=(\,1\,2\,\cdots\,d-4\,) \,(\,d-3\,d-2\,)\,(\,d-1\,d\,)\,\qquad v\,=\,(\,d-2\,d-1\,)\,(\,1\,d\,).
\end{equation}
For our purpose, it will be more convenient to realize such a surface as shown in Figure \ref{fig:basecaseevenspinsts}, which is obtained by gluing along a slit the flat torus $(Y_{d-4},\xi)$ arising from the cycle $h=\,(\,1\,2\,\cdots\,d-4\,)$ and the origami $\mathcal O$ arising from the cycles $h=\,(\,d-3\,d-2\,)\,(\,d-1\,d\,)$ and $v\,=\,(\,d-2\,d-1\,)$. It is an easy matter to check that these two square-tiled surfaces are isomorphic. 

\begin{figure}[h!]
    \centering
    \begin{tikzpicture}[scale=0.875, every node/.style={scale=0.825}]
    \definecolor{pallido}{RGB}{221,227,227}

    \pattern [pattern=north west lines, pattern color=pallido]
    (0,0)--(4,0)--(4,2)--(6,2)--(6,4)--(2,4)--(2,2)--(0,2)--(0,0);

    \pattern [pattern=north west lines, pattern color=pallido] (8,0)--(16,0)--(16,2)--(8,2)--(8,0);
    
    \draw[thin, black] (0,0)--( 4,0);
    \draw[thin, black] (8,0)--(16,0);
    
    \draw[thin, black]  (0,2)--( 4,2);
    \draw[thin, orange] (4,2)--( 6,2);
    \draw[thin, black]  (8,2)--(16,2);

    \draw[thin, black]  (2,4)--( 4,4);
    \draw[thin, orange] (4,4)--( 6,4);
    
    \foreach \x in {0, 2, 4, 8, 10, 12, 14, 16} {
        \draw (\x, 0) -- (\x, 2);
        }
    \foreach \x in {2, 4, 6} {
        \draw (\x, 2) -- (\x, 4);
        }

    \draw[orange] (09,1)--(11,1);

    \node at ( 1,  1)      {$5$};
    \node at ( 3,  1)      {$6$};
    \node at ( 3,  3)      {$7$};
    \node at ( 5,  3)      {$8$};
    \node at ( 9,2.5)      {$1$};
    \node at (11,2.5)      {$2$};
    \node at (13,2.5)      {$3$};
    \node at (15,2.5)      {$4$};

    \foreach \x in {8, 10, 12, 14, 16} {
        \fill (\x, 0) circle (2pt);
        }
    \foreach \x in {8, 10, 12, 14, 16} {
        \fill (\x, 2) circle (2pt);
        }
    \foreach \x in {0,2} {
        \node[blue] at (0,\x) {$\times$};
        }
    \foreach \x in {0,2,4} {
        \node[blue] at (4,\x) {$\times$};
        }
    \node[blue] at (9,1) {$\times$};
    
    \foreach \x in {0,2,4} {
        \node[red] at (2,\x) {$\times$};
        }
    \foreach \x in {2,4} {
        \node[red] at (6,\x) {$\times$};
        }
    \node[red] at (11,1) {$\times$};
    \end{tikzpicture}
    \caption{Realization of $(X_8,\omega)$: On the left the origami $\mathcal O$ and on the right the flat torus $(Y_4,\xi)$.}
    \label{fig:basecaseevenspinsts}
\end{figure}
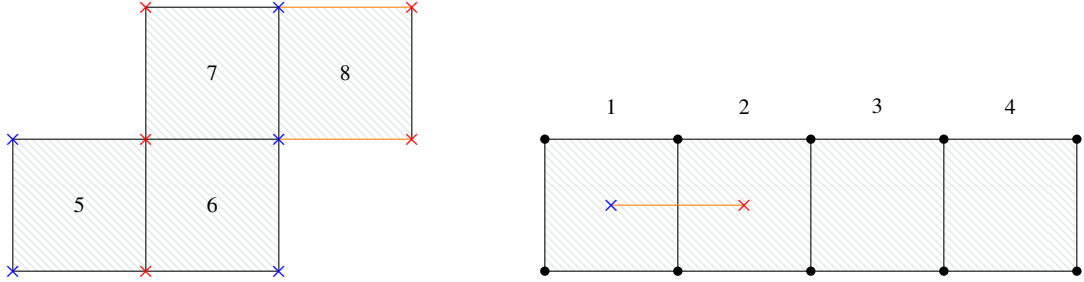

\noindent For every $d\ge6$, the resulting space is a square-tiled surface of genus three with two zeros of order two. Moreover, it can be directly seen that the spin parity is even. In particular, for $d=6$, we realize a structure with the minimal degree allowed by $\mu=(\,\mu_o,\,\mu_1\,)$ with $\mu_i=(\,2\,)$ for $i=0,1$. Notice that this is the only possible case for $g=3$. The following remark is in order.

\begin{rmk}
    For the reader's convenience, we recall that, according to the classification of the connected components of strata provided by Kontsevich and Zorich in \cite{KZ}, the stratum $\mathcal{H}_3(\,2,2\,)$ has exactly two connected components: one is hyperelliptic, and the other has odd spin parity. We observe that the structure just constructed is hyperelliptic. In fact, the argument above is an alternative of the one used in \S\ref{sssec:hypercomp} for $g=3$. For odd values of $g=2m+1$ greater than 3, the stratum $\mathcal H_g(\,2m,2m\,)$ has three connected components: two of these have even parity (the remaining one has odd parity), and these components with even parity are further distinguished by their hyperelliptic structure, one is hyperelliptic while the other is not. In the present section, we aim to construct translation surfaces with even parity that are not hyperelliptic — we have already discussed the construction of hyperelliptic structures in \S\ref{ssec:lastcasehyp}. Therefore, for these strata, it will be necessary to verify that the obtained structures do not admit any hyperelliptic involution. This verification is postponed to the end of the section.
\end{rmk}

\smallskip


\noindent Let us now assume $\mu=\mu_o=(\,2m_1,\dots,2m_k\,)$ and let $d$ be the minimal degree. We extend the previous construction as follows: We focus on the flat torus $(Y_{d-4},\xi)$ determined by the cycle $(\,1\,2\,\cdots\,d-4\,)$ and then proceed as in Case ${\rm O}$; see \S\ref{sssec:oddcase}. More precisely, $(Y_{d-4},\xi)$ comes with a genuine covering $\pi\colon Y_{d-4}\longrightarrow \mathbb T$ of degree $d-4$. Let $y_o\in\mathbb T$ be any point and let $Q_o=\{\,q_{o\,1},\dots,q_{o\,d-4}\,\}$ be the preimages of $y_o$ in $(Y_{d-4},\xi)$. We join two points if they are adjacent in the same sense as in \S\ref{sssec:oddcase}. In particular, we can assume that $\mathcal O$ is glued along the edge $e_1$ joining $q_{o1}$ and $q_{o2}$, thus making $y_o$ a branch value with two ramification points of order $2$. Depending on the signature $\mu_o$, the following mutually disjoint cases can occur.
\begin{itemize}
    \item[1.] $\mu_o=(2,2,2m_3,\dots,2m_k)$. Let $(X_d,\omega_d)$ be the square-tiled surface described above and consider a cylinder $C$ of $(Y_{d-4},\xi)$ of area $d-6$ disjoint from the slit -- \textit{e.g.} take the cylinder determined by the squares labeled from $3$ to $d-4$ glued as prescribed by the cycle $h=\,(\,1\,2\,\cdots\,d-4\,)$. Notice that, since $2m_1=2m_2=2$ and $\mu_o$ satisfy  \eqref{eq:subsigncond}, then
    \begin{equation}
        \sum_{i=3}^k (\,2m_i+1\,)\,\le\, d - 6.
    \end{equation}
    Thus we have enough room to realize $(2m_3,\dots,2m_k)$ in $C$ as done in \S\ref{sssec:oddcase}. Since $(X_d,\omega_d)$ has even parity and bubbling handles with zero volume does not alter it, the resulting structure also has even parity.
    \smallskip
    \item[2.] $\mu_o=(2,2m_2,\dots,2m_k)$. We can assume $m_i\ge2$ for all $i=2,\dots,k$; otherwise, we would reduce to the previous case. Once again, let $(X_d,\omega_d)$ be the square-tiled surface described above and let $Q_o$ be as above. Join $q_{oi}$ and $q_{oi+1}$ with an edge labeled as $e_i$ for $i=2,\dots,2m_2-1$, and orient all edges from the left to right. Slit all edges and label the resulting sides according to our convention and then identify $e_{2i}^{\pm}$ with $e_{2i+1}^{\mp}$ for all $i=1,\dots,m_2-1$. The resulting structure is a square-tiled surface and its abelian differential has signature $(\,2,\,2m_2\,)$ by construction, where the zero of order $2m_2$ arises from collapsing the points $q_{o2},\dots,q_{0,2m_2}$ of $(X_d,\omega_d)$. Notice that such a construction involves only $2m_2$ squares of $(Y_d,\xi)$ and there are other $d-4-2m_2$ untouched squares. Since $\mu_o$ satisfies \eqref{eq:subsigncond} and $2m_1=2$, we have 
    \begin{equation}
        \sum_{i=3}^k (\,2m_i+1\,)\le d - (2m_2 + 2 +2)=d-4-2m_2. 
    \end{equation}
    Therefore, we have enough room to realize the remaining part $(2m_3,\dots,2m_k)$ of $\mu_o$ without altering the spin parity.
    \smallskip
    
    \item[3.] $\mu_o=(2m_1,\dots,2m_k)$ with $k\ge2$ and assume $m_i\ge2$ for all $i=1,\dots,k$. To better understand how to handle this case, we first rewrite $(\,1\,2\,\cdots\,d-4\,)$ as $(\,d-(2m_k+1)\,\cdots\,d-4\,1\,2\,\cdots\,d-(2m_k+2)\,)$; see Figure \ref{fig:basecaseevenspinsts2}. Both cycles clearly determine $(Y_{d-4},\xi)$ and let $Q_o$ be as above. Join $q_{o\,i}$ and $q_{o\,i+1}$ with an edge labeled as $e_i$ for $i=2,\dots,2m_1-1$ and orient all edges from the left to right -- notice that these are $2m_1-2$ edges. Slit all these edges, label the resulting sides according to our convention, and then identify $e_{2i}^{\pm}$ with $e_{2i+1}^{\mp}$ for all $i=1,\dots,m_1-1$. In the same fashion, join $q_{o\,d-(2m_k+1)+j-1}$ and $q_{o\,d-(2m_k+1)+j}$ with an edge labeled as $e_{d-(2m_k+1)+j-1}$ where $j=1,\dots,2m_k-3$. It is straightforward to check that for $j=2m_k-3$, we get $q_{o\,d-4}$. Finally, join $q_{o\,d-4}$ and $q_{o\,1}$ with an edge labeled as $e_{d-4}$. We thus have another collection of $2m_k-2$ edges.
    Slit all these edges and re-glue the resulting sides as usual. The resulting structure is a square-tiled surface and its abelian differential has signature $(\,2m_1,\,2m_k\,)$. The zero of order $2m_1$ arises from collapsing the points $q_{o\,2},\dots,q_{o\,2m_1}$ of $(X_d,\omega_d)$ and the zero of order $2m_k$ arises from collapsing the points $q_{o1},\dots,q_{o\,d-(2m_k+1)}$. Notice that such a construction involves $2m_1+2m_k-2$ squares of $(Y_d,\xi)$ and there are other $(d-4)-(2m_1+2m_k-2)=d-(2m_1+2m_k+2)$ untouched squares. Since $\mu_o$ satisfies \eqref{eq:subsigncond}, then we have
    \begin{equation}
        \sum_{i=2}^{k-1} (\,2m_i+1\,)\le d - (2m_1 + 2m_k + 2).
    \end{equation}
    Therefore, we have enough room to realize the remaining part $(2m_2,\dots,2m_{k-1})$ of $\mu_o$ without altering the spin parity as done in \S\ref{sssec:oddcase}.
    \smallskip
\end{itemize}

\begin{figure}[h!]
    \centering
    \begin{tikzpicture}[scale=0.875, every node/.style={scale=0.825}]
    \definecolor{pallido}{RGB}{221,227,227}

    \pattern [pattern=north west lines, pattern color=pallido] (6,0)--(18,0)--(18,2)--(6,2)--(6,0);

    \pattern [pattern=north west lines, pattern color=pallido] (20,0)--(22,0)--(22,2)--(20,2)--(20,0);
    
    \draw[thin, black] (6,0)--(18,0);
    \draw[thin, black] (6,2)--(18,2);
    \draw[thin, black] (20,0)--(22,0);
    \draw[thin, black] (20,2)--(22,2);
    
    \foreach \x in {6, 8, 10, 12, 14, 16, 18, 20, 22} {
        \draw (\x, 0) -- (\x, 2);
        }

    \draw[orange] (11,1)--(13,1);
    \draw[blue]   ( 9,1)--(11,1);
    \draw[blue]   ( 7,1)--( 9,1);

    \draw[red]    (13,1)--(15,1);
    \draw[red]    (15,1)--(17,1);

    \node at ( 7,2.5)      {$d-5$};
    \node at ( 9,2.5)      {$d-4$};
    \node at (11,2.5)      {$1$};
    \node at (13,2.5)      {$2$};
    \node at (15,2.5)      {$3$};
    \node at (17,2.5)      {$4$};
    \node at (21,2.5)      {$d-6$};

    \node at (19 ,1) {$\cdots$};
    \node at ( 7.5,1.25) {$e_{d-5}$};
    \node at ( 9.5,1.25) {$e_{d-4}$};
    \node at (14.5,1.25) {$e_{2}$};
    \node at (16.5,1.25) {$e_{3}$};

    \foreach \x in {6, 8, 10, 12, 14, 16, 18, 20, 22} {
        \fill (\x, 0) circle (2pt);
        }
    \foreach \x in {6, 8, 10, 12, 14, 16, 18, 20, 22} {
        \fill (\x, 2) circle (2pt);
        }
     \node[red] at (13,1) {$\times$};
     \node[blue] at (11,1) {$\times$};
     \fill[black] ( 9,1) circle (2pt);
     \fill[black] ( 7,1) circle (2pt);
     \fill[black] (15,1) circle (2pt);
     \fill[black] (17,1) circle (2pt);

    \end{tikzpicture}
    \caption{Realization of the signature $(\,2m_1,\,2m_k\,)=(\,4,\,4\,)$. We have used only $6$ squares of $(Y_d,\xi)$ -- notice that $6=4+4-2$. Thus, we can use additional $d-8$ squares to realize $(2m_2,\dots,2m_{k-1})$.}
    \label{fig:basecaseevenspinsts2}
\end{figure}
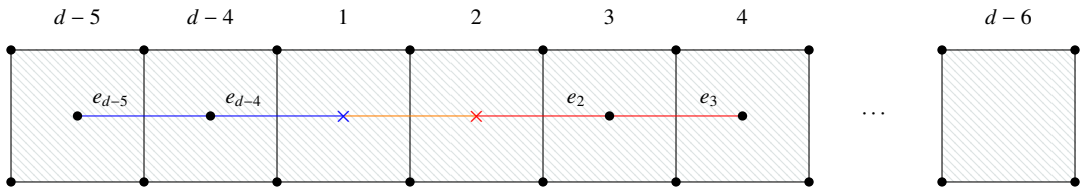

\begin{itemize}
     \item[4.] $\mu_o=(\,2m\,)$. We first observe that this case does not occur if $\ell=0$. However, if $\ell\ge1$, some sub-signatures can consist of a single zero, and we need to address this case to extend our constructions to partitioned signatures. This case easily follows by joining $q_{o\,d-4}$ and $q_{o1}$ with an edge $e_{d-4}$ and by joining $q_{o2}$ and $q_{o3}$ with an edge $e_{2}$. Slit these edges and identify $e_2^{\pm}$ with $e_{d-4}^{\mp}$. The resulting structure has a single zero of order $4$. We finally join $q_{0i}$ and $q_{0i+1}$ for $i=3,\dots,2m$ with edges labeled by $e_i$. We slit and re-glue them in the same manner as above. The resulting structure has a single zero of order $2m$ as desired and the parity remains unaltered. 
\end{itemize}

\smallskip

\noindent Next, we address the case $\ell\ge1$ which is based on the previous construction. Recall that $(X_d,\omega_d)$ comes with a branched covering map of degree $d$, say $\pi_d\colon X_d\longrightarrow \mathbb T$, which is branched at $q_{o1}$ and $q_{o2}$ by construction. Let $y_1\in\mathbb T$ such that $y_1-y_o=r_1$ is the prescribed relative period and let $Q_1=\{\,q_{11},\dots,q_{1d}\,\}$ be the preimages of $y_1$ via $\pi_d$. Since $d\ge6$, it follows that $Q_1$ has cardinality at least $6$. Consider the following points $q_{11},\,q_{12},\,q_{1d-3},\,\dots,\,q_{1d}$, where $q_{1i}$ belongs to the $i-$th square. Join $q_{11}$ and $q_{12}$ with an edge, say $a_1$. In the same fashion, join $q_{1d-3}$ and $q_{1d-2}$ with an edge $a_2$ and join $q_{1d-1}$ and $q_{1d}$ with an edge $a_3$. Upon orienting these edges from left to right, slit them and re-glue $a_i^+$ with $a_{i+1}^-$ for $i=1,2$ and $a_3^+$ with $a_1^-$. The resulting surface has two additional handles and a new pair of zeros of order $2$ that both project to $y_1$, making it a branch value. In other words, there is a newborn sub-signature $(\,2,\,2\,)$. Suppose we want to realize $\mu_1=(\,2m_{k+1},\dots,2m_{k+h}\,)$ for $h\ge1$. Then,  according to the structure of $\mu_1$, we return to one of the four cases listed above, and we can proceed in the same way, thus realizing $\mu_1$. An iterative argument shows that we can realize every signature $\mu=(\,\mu_o,\mu_1,\dots,\mu_\ell\,)$ for every $\ell\ge0$ with even parity as desired. 

\smallskip

\noindent To complete our discussion, it remains to verify that all structures realized in $\mathcal{H}_g(\,2m,2m\,)$ are not hyperelliptic. We recall that $(X_d, \omega_d)$ is obtained by gluing an origami $\mathcal O$ and the flat torus $(Y_{d-4}, \xi)$, and we observe that all the constructions performed above take place in this second component. To construct a structure in the stratum $\mathcal{H}_g(2m,2m)$, we proceed as in step 3 above. In this specific case, the construction described above is entirely equivalent to the following one. Pick a rectangle of size $(d-4) \times 1$, \textit{i.e.}, a flat torus of volume $d-4$, and then attach $g-3$ handles (since $g$ is odd, $g-3$ is even), as done in Section \S\ref{sssec:oddcase} for the odd case, to create a translation surface $(X, \omega)$ of genus $g-2$ and signature $(g-3, g-3)$ with odd parity and a saddle connection of unit length. Notice that this is always possible.  Next, we glue the origami $\mathcal{O}$ along the saddle connection, resulting in a translation surface of signature $(g-1, g-1)$ and even parity. Notice that $(X, \omega)$ has odd parity, so it cannot be hyperelliptic. Consequently, the resulting structure after gluing the origami $\mathcal{O}$ cannot be hyperelliptic either. Otherwise, by shrinking the slit to length zero, we would create a sequence of translation surfaces with hyperelliptic structures that degenerates to a hyperelliptic nodal differential, which requires both components to be hyperelliptic, leading to the desired contradiction. Said more generally, the closures of the hyperelliptic and non-hyperelliptic components remain disjoint in the multi-scale compactification of strata; see \cite{BCGGM2}. 

\begin{rmk}
    For strata of signature $(\,2m,2m\,)$, the same result is also obtained in \cite[Section \S3.2.2]{BJJP}. More precisely, one can pick $g=2m+1$ rectangles of size $2\,\times\,1$ and glue them along slits of unit length. The resulting structure has the desired properties.
\end{rmk}

\noindent This concludes the realization of signatures with prescribed parity in the holomorphic setting.

\bigskip

\bibliographystyle{amsalpha}
\bibliography{rprhdpi}

\end{document}